\documentclass{amsart}

% \date{29/10/2011}

\usepackage[english]{babel}
\usepackage[T1]{fontenc}
\usepackage{graphicx,url}
\usepackage[latin1]{inputenc} 
\usepackage[all]{xy}
\usepackage{amstext,indentfirst,amscd,amssymb,amsfonts,latexsym,amsmath,amsthm,amsgen}
\usepackage{helvet}%pacote para uso de fonte arial
%\usepackage{setspace}%pacote para aumentar/diminuir espaçamento

%\pagestyle{plain}
% \setlength{\textwidth}{15cm}
% \setlength{\textheight}{25cm}
% \voffset -1.2in
% \oddsidemargin 0cm
%\renewcommand{\rmdefault}{phv}%comando para poder usar helvetica

% marginal notes
\newcommand{\marginnote}[1]{\ifthenelse{\isodd{\thepage}}{\normalmarginpar}
{\reversemarginpar}\marginpar{\fbox{\parbox{24mm}{\sloppy\footnotesize #1}}}}

\newtheorem{teo}{Theorem}%[section]
\newtheorem{prop}[teo]{Proposition}%[section]
\newtheorem{coro}[teo]{Corollary}
\newtheorem{lema}[teo]{Lemma}
\newtheorem{defini}[teo]{Definition}

\newtheorem{remark}[teo]{Remark}%[section]

\newcommand{\bq}{\overline{Q}}
\newcommand{\lra}{\longrightarrow}
\newcommand{\A}{\noindent {\mathcal{A}}}
\newcommand{\B}{\noindent {\mathcal{B}}}
\newcommand{\C}{\noindent {\mathcal{C}}}

\newcommand{\E}{\noindent {\mathcal{E}}}
\newcommand{\V}{\noindent {\mathcal{V}}}
\newcommand{\cP}{\noindent {\mathcal{P}}}

\newcommand{\I}{\noindent {\mathcal{I}}}

\newcommand{\Q}{\noindent {\mathcal{Q}}}

\newcommand{\Z}{\noindent {\mathbb{Z}}}
\newcommand{\T}{\noindent {\mathfrak{Top}}}

\newcommand{\Qc}{\noindent {\mathfrak{Qco}}}
\newcommand{\Co}{\noindent {\mathfrak{Coh}}}
\newcommand{\com}{\noindent {\mathbb{C}}}
\newcommand{\fii}{\noindent {\varphi}}
\newcommand{\e}{\noindent {\eta}}
\newcommand{\al}{\noindent {\alpha}}
\newcommand{\p}{\noindent {\bullet}}
\newcommand{\K}{\mathbb{K}}
\newcommand{\cK}{{\mathcal K}}
\newcommand{\cE}{{\mathcal E}}

\begin{document}
\title{Derived categories of functors and Fourier--Mukai transform for quiver sheaves}

\author{Paula Olga Gneri}
\author{Marcos Jardim}

\address{IMECC - UNICAMP \\
Departamento de Matem\'atica \\ Caixa Postal 6065 \\
13083-970 Campinas-SP, Brazil}
\email{jardim@ime.unicamp.br}

\begin{abstract}
 Let $\C$ be small category and $\A$ an arbitrary category.
Consider the category $\C(\A)$ whose objects are functors from $\C$
in $\A$ and whose morphisms are natural transformations. Given a functor
$ F: \A \rightarrow \B $ one obtains an induced functor
$ F_ {\C}: \C (\A) \rightarrow \C (\B) $. If $\A$ and
$\B$ are abelian categories, we have that $\C (\A) $
and $ \C (\B) $ are also abelian, and one has two functors
$R (F_ {\C}):D(\C(\A))\to D(\C(\B))$ and $ (RF) _ {\C}: \C (D (\A))
\rightarrow \C (D (\B)) $. The goals of this paper are 1) to find
a relationship between $ D (\C (\A)) $ and $ \C (D (\A)) $; 2) to
relate the functors $R (F_ {\C})$ and $(RF) _ {\C}$. As an application, 
we prove a version of Mukai's Theorem for quiver sheaves.
\end{abstract}

\maketitle

%\onehalfspacing%espaçamento de 1,5

% ------- INTRODUCTION -------------------------------------

\section{Introduction}

Let $\C$ be a small category and $\A$ an arbitrary category. We denote by $\C(\A)$ the category whose objects are the functors from $\C$ to $\A$, and whose morphisms are natural transformations. It turns out that $\C(\A)$ inherits many of the properties and structures present in $\A$; for instance, if $\A$ is abelian then $\C(\A)$ is also abelian (see Proposition \ref{abeli} below).

An important example of this situation is provided by the quiver representation . Recall that a \emph{quiver} $Q=(Q_0,Q_1,t,h)$ is an oriented graph consisting of two sets $Q_0$ (\emph{vertices}) and $Q_1$ (\emph{arrows}), and maps $t:Q_1\to Q_0$ (\emph{tail}) and 
$h:Q_1\to Q_0$ (\emph{head}). A path in the quiver $Q$ is a sequence of arrows
$p=a_1 a_2... a_n$ with  $h(a_{i+1})=t(a_i)$ for $1\leqslant i < n$; each vertex $i \in Q_0$ corresponds to a trivial path $e_i$. With these definitions in mind, one can associate to $Q$ a (small) category $\Q$ where each vertex is seen as an object and each path connecting two vertices is seen as a morphism between them; we say that the category $\Q$ is generated by the quiver $Q$. Objects in $\Q(\A)$ are called representations of the quiver $Q$ in the category $\A$. 

It then makes sense to consider the derived category $D(\C(\A))$. Our first goal is to find a relation between the categories $D(\C(\A))$ and $\C(D(\A))$; which are easily seen not to be equivalent in general. We show that there exists a functor $T:D(\C(\A))\to \C(D(\A))$ which is fully faithfull when $\C$ is generated by a quiver (cf. Theorem \ref{t fi ple}).

Now if $F:\A\to\B$ is a functor between arbitrary categories $\A$ and $\B$, one can consider an \emph{induced functor} $F_{\C}: \C(\A) \lra \C(\B)$ which takes $G$ in $\C(\A)$ to the composition $F\circ G$ in $\C(\B)$. The induced functor $F_{\C}$ also inhrets some of the properties of $F$; in particular, one can show that if $\A$ and $\B$ are abelian categories and $F$ is additive and left exact, then so is $F_{\C}$.

Under the right conditions, it makes sense to consider two functors: the derived of the induced functor $R(F_{\C}):D(\C(\A))\to D(\C(\B))$, and the functor induced by the derived functor $(RF)_{\C}:\C(D(\A))\to \C(D(\B))$. Our main result here is the Theorem \ref{teo meu final}, where we prove that, for a finite quiver $Q$, if $F:\A\to\B$ is a derived equivalence between abelian categories, then the functor $R(F_{\Q}):D^*(\Q(\A))\to D^*(\Q(\B))$ (with $*=+,b$) is also an equivalence of categories. 

All this is motivated by problems in algebraic geometry. Indeed, quiver sheaves (see for instance \cite{GK} and the references therein) and parabolic sheaves (see \cite{Y}) are examples of relevant algebraic geometric objects which can be described in terms of functors taking values in a category of sheaves on an algebraic variety.

Given a quiver $Q$, recall that \emph{Q-sheaf} on an algebraic variety $X$ is an object of the functor category $\Q C(X):=\Q(\Co(X))$, where $\Co(X)$ is the category of coherent sheaves on $X$. (cf. e.g. \cite{GK}).

As an application, we consider the Fourier-Mukai transform for quiver bundles. More precisely, let $X$ be an abelian variety and $Y$ its dual; let also $\cP$ denote the Poincaré line bundle on the product $X\times Y$. Consider the functor $\mathcal{S}:\Co(X)\to\Co(Y)$ originally introduced by Mukai in \cite{M}, given by $\Phi(\E)=\pi_{Y*}(\pi_X^*E\otimes\cP)$, where $\pi_X$ and $\pi_Y$ are the projections of $X\times Y$ onto the first and the second factors, respectively. Mukai has proved in \cite{M} that $\mathcal{S}$ is a derived equivalence, i.e. $R\mathcal{S}:D(X)\to D(Y)$ is an equivalence of categories. It follows from our main results, see details in Section \ref{fm}, that the functor $R(\mathcal{S}_{\Q}):D^*(\Q C(X))\to D^*(\Q C(Y))$, understood as a \emph{Fourier-Mukai transform for $Q$-sheaves}, is also an equivalence.

\bigskip

\noindent{\bf Acknowledgments.}  POG was supported by PhD fellowship from CNPQ; the present work is part of her thesis. MJ is partially supported by the CNPq grant number 302477/2010-1 and the FAPESP grant number 2011/01071-3. We thank Viktor Bekkert, Carlos Tejero Prieto and Ana Cristina Lopez-Martin for many useful discussions and their invaluable assistance.

% ------------------- PRELIMINARIES ------------------------------------------

\section{Preliminary definitions and results} \label{prelim}

\subsection{The category $\C(\A)$} \label{C(A)}

Recall that a category $\C$ is called \emph{small} if $Ob(\C)$ is actually a set and not properly a class. Given a category
$\A$, we denote by $\C(\A)$ the category where $Ob(\C(\A))$ is the class consisting of all functor from $\C$ to $\A$, and whose morphisms are the natural transformations.  

Note that if $\C$ is a small category, then
$$ Mor(\C)=\bigcup_{(A,B) \in Ob(\C)\times Ob(\C)} Hom_{\C}(A,B) \hspace{0.5 cm}  {\rm and}   \hspace{0.1 cm}
\prod_{(A,B)\in Ob(\C) \times Ob(\C)}Hom_{\C}(A,B)$$
are sets. That guarantees that $Hom_{\C(\A)}(F,G)$ be also a set by any $F, G \in Ob(\C(\A))$, which is one of the necessary conditions for $\C(\A)$ let be a category. 

The following Lemma  (see \cite[page 195]{Maclane}) will be useful later in the proof of Proposition \ref{abeli} below.

\begin{lema} \label{lema pri}
Let $\A$ be an abelian category and let $f$ be a morphism  in $\A$. Then $f$ has  factorization $f=m\circ e$ with $m$ monic and $e$ epi. Moreover,  given any other factorization $f'=m'\circ e'$ with $m'$ monic and $e'$ epi and a commutative diagram 
$$\xymatrix{\bullet \ar[r]^-{f} \ar[d]_{a} & \bullet \ar[d]^{b}\\
\bullet \ar[r]_-{f'} & \bullet }$$
there is a unique morphism $k$ such that the following diagram commutes
$$\xymatrix{\bullet \ar[r]^-{e} \ar[d]_{a} & \bullet \ar[r]^-m\ar[d]^k & \bullet \ar[d]^{b}\\
 \bullet \ar[r]_-{e'}  & \bullet \ar[r]_-{m'} & \bullet }$$
\end{lema}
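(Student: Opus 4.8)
The plan is to treat the two assertions of the lemma separately: first the existence of an epi--mono factorization of $f$, and then the ``diagonal fill-in'' that produces the comparison morphism $k$. For existence, I would invoke the canonical image factorization available in any abelian category. Writing $\operatorname{coker}(f)\colon B\to C$ for the cokernel of $f$ and $m\colon I\to B$ for its kernel $\ker(\operatorname{coker} f)$, the relation $\operatorname{coker}(f)\circ f=0$ forces $f$ to factor through $m$, say $f=m\circ e$ for a unique $e$. Being a kernel, $m$ is automatically monic; the substantive point is that $e$ is epic, which follows from the fundamental isomorphism $\operatorname{coim}(f)\xrightarrow{\ \sim\ }\operatorname{im}(f)$ between coimage and image. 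I expect this to be the main obstacle, since it is exactly where the abelian axioms (every monic is a kernel, every epic is a cokernel, and the resulting self-duality) genuinely enter; the rest of the argument is formal.

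For the second assertion I would recast it as a fill-in problem. From $b\circ f=f'\circ a$ together with the two factorizations one obtains a commutative square whose left vertical map is $e$ (epic), whose right vertical map is $m'$ (monic), whose top is $e'\circ a$ and whose bottom is $b\circ m$; indeed $m'\circ(e'\circ a)=f'\circ a=b\circ f=(b\circ m)\circ e$. A morphism $k$ as required is precisely a diagonal of this square, that is, $k\circ e=e'\circ a$ and $m'\circ k=b\circ m$. To produce it, I would form the pullback $T$ of $m'$ along $b\circ m$, with projections $p\colon T\to I$ and $q\colon T\to I'$ satisfying $(b\circ m)\circ p=m'\circ q$; since $m'$ is monic, so is its pullback $p$. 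The identity $m'\circ(e'\circ a)=(b\circ m)\circ e$ then yields, by the universal property of the pullback, a unique $t\colon A\to T$ with $p\circ t=e$ and $q\circ t=e'\circ a$. Because $p\circ t=e$ is epic, $p$ is epic as well, hence monic and epic, hence an isomorphism in the abelian category $\mathcal{A}$. Setting $k=q\circ p^{-1}$ gives $k\circ e=q\circ t=e'\circ a$ and $m'\circ k=(b\circ m)\circ p\circ p^{-1}=b\circ m$, exactly the two commutativities demanded.

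Uniqueness is then immediate: if $k_1$ and $k_2$ both satisfy $k_i\circ e=e'\circ a$, then $k_1\circ e=k_2\circ e$ and, since $e$ is epic, $k_1=k_2$ (equivalently one may cancel the monic $m'$ on the left of $m'\circ k_i=b\circ m$). The only ingredients I need beyond the factorization itself are standard abelian-category facts—existence of pullbacks, stability of monics under pullback, and the fact that a morphism which is simultaneously monic and epic is invertible—so I would simply invoke these rather than reprove them.
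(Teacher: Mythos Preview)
Your argument is correct. The paper, however, does not give its own proof of this lemma: it is stated with a reference to Mac~Lane (page~195 of \emph{Categories for the Working Mathematician}) and then used as a black box in the proof that $\C(\A)$ is abelian. Your approach---the canonical image factorization through $\ker(\operatorname{coker} f)$ for existence, and the pullback/diagonal fill-in to produce the comparison morphism $k$---is the standard one and is essentially what appears in Mac~Lane's treatment, so there is nothing in the paper to compare against beyond the citation; your write-up simply supplies what the paper leaves to the reference.
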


\begin{prop}\label{abeli}
Given $\A$ an additive (abelian) category, $\C(\A)$ is also an additive (abelian) category.
\end{prop}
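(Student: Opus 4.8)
The plan is to verify the additive and abelian axioms for $\C(\A)$ entirely \emph{objectwise}, transporting the corresponding structure from $\A$ through the evaluation functors $\mathrm{ev}_c \colon \C(\A) \to \A$, $F \mapsto F(c)$, one for each $c \in Ob(\C)$. The guiding principle is that a family of constructions performed in $\A$ at each object $c$ assembles into a functor on $\C$ precisely because the arrows of $\C$ act through the universal properties involved, and that a (co)cone is universal in $\C(\A)$ as soon as it is so at every $c$. I would organize the argument in three stages: the preadditive and biproduct structure (additive case), the existence of kernels and cokernels, and finally the two normality axioms characterizing abelian categories, which is where Lemma~\ref{lema pri} enters.

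First, for the additive structure. The zero object of $\C(\A)$ is the constant functor $\underline{0}$ sending every object to the zero object $0 \in \A$ and every arrow to the zero morphism. For $F,G \in \C(\A)$ I equip $\mathrm{Hom}_{\C(\A)}(F,G)$ with the operation $(\eta + \mu)_c := \eta_c + \mu_c$; one checks that $\eta + \mu$ is again natural because addition in $\A$ is compatible with composition, and that this makes $\mathrm{Hom}_{\C(\A)}(F,G)$ an abelian group with composition $\mathbb{Z}$-bilinear, so $\C(\A)$ is preadditive. The biproduct of $F$ and $G$ is defined objectwise by $(F \oplus G)(c) := F(c) \oplus G(c)$, with $(F\oplus G)(\phi) := F(\phi)\oplus G(\phi)$ for an arrow $\phi$ of $\C$; the projections and injections are the objectwise ones, and the biproduct identities hold at each $c$, hence in $\C(\A)$. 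This yields all finite biproducts and completes the additive case.

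Next, kernels and cokernels. Given a natural transformation $\eta\colon F\to G$, I define $K$ by $K(c):=\ker(\eta_c)$, with inclusion $\kappa_c$. For an arrow $\phi\colon c\to c'$ of $\C$, naturality gives $\eta_{c'}\circ F(\phi)\circ\kappa_c=G(\phi)\circ\eta_c\circ\kappa_c=0$, so $F(\phi)\circ\kappa_c$ factors through $\kappa_{c'}$; the universal property of the kernel in $\A$ produces a unique $K(\phi)$, and uniqueness forces $K$ to respect identities and composition, so $K$ is a functor and $\kappa=(\kappa_c)$ is a natural transformation. One verifies $\kappa$ is a kernel of $\eta$ in $\C(\A)$ by checking its universal property, which again reduces to the objectwise universal property of $\ker(\eta_c)$. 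The cokernel is constructed dually. From the objectwise kernel it follows that $\eta$ is a monomorphism in $\C(\A)$ iff each $\eta_c$ is monic in $\A$, and dually for epimorphisms.

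The main obstacle, and where Lemma~\ref{lema pri} is essential, is the last axiom: every mono is a kernel and every epi is a cokernel. Here I would show that the image factorization is functorial. For $\eta\colon F\to G$, apply Lemma~\ref{lema pri} objectwise to factor $\eta_c=m_c\circ e_c$ with $m_c$ monic and $e_c$ epi, and set $I(c):=\mathrm{im}(\eta_c)$. For each arrow $\phi$ of $\C$, the naturality square for $\eta$ together with the two factorizations at $c$ and $c'$ is exactly the data to which the diagonal fill-in of Lemma~\ref{lema pri} applies, yielding a unique $I(\phi)$ making both halves commute; uniqueness again gives functoriality of $I$ and naturality of $e=(e_c)$ and $m=(m_c)$. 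Thus every $\eta$ factors as an objectwise-epi followed by an objectwise-mono. If now $\eta$ is monic in $\C(\A)$, then each $\eta_c$ is monic, hence in the abelian category $\A$ equals the kernel of its cokernel; assembling these identifications objectwise, using that both the cokernel and its kernel were built objectwise, shows $\eta$ is the kernel of $\mathrm{coker}(\eta)$ in $\C(\A)$, and the epi case is dual. I expect the bookkeeping in this last step --- verifying that all the objectwise universal maps glue into genuine natural transformations --- to be the only real subtlety; the uniqueness clause of the factorization lemma is precisely what guarantees the required naturality.
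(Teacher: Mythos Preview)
Your proposal is correct and follows essentially the same route as the paper: everything is constructed objectwise via the evaluation functors, Lemma~\ref{lema pri} is invoked exactly where the paper uses it (functoriality of the epi--mono factorization), and the normality axioms are checked by reducing to the objectwise statements in $\A$. The only cosmetic difference is that the paper separates the factorization axiom (their \textbf{AB4}) from the normality axioms (\textbf{AB2}/\textbf{AB3}), whereas you fold them together, but the content is identical.
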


\begin{proof} We first prove that $\C(\A)$ is additive. 

\noindent{\bf (i)} For any $F,G \in Ob(\C(\A))$, we need to prove that $Hom_{\C(\A)}(F,G)$ is an abelian group:
  Let $\eta_{1}$ and $\eta_{2}$ be two morphisms in $Hom_{\C(\A)}(F,G)$, i.e.
	$\e_{i}=\{(\e_{i})_{C} \in Hom_{\A}(F(C),G(C)); C \in Ob(\C)\}$, for  $i=1,2$. Under these conditions, we define the group operation by: 
  $$  \e_{1}+\e_{2}=\{(\e_{1}+\e_{2})_{C}=(\e_{1})_{C}+(\e_{2})_{C}; C \in Ob(\C)\}.$$
  Let us prove that $\e_{1}+\e_{2}$ is a natural transformation between the functors $F$ and $G$. We need to check that
	$(\e_{1}+\e_{2})_{C}\circ F(f)= G(f) \circ (\e_{1}+\e_{2})_{D}$ for all morphism $f:C \longrightarrow D$ in $\C$. Since $\A$ is additive we have:
 $$ \begin{array}{ll}
      (\e_{1}+\e_{2})_{C}\circ F(f) & =((\e_{1})_{C}+(\e_{2})_{C})\circ F(f)= \\
      & = (\e_{1})_{C}\circ F(f)+(\e_{2})_{C}\circ F(f)=  \\
      & =G(f) \circ (\e_{1})_{D} + G(f) \circ (\e_{2})_{D}  = \\
      &= G(f)\circ ((\e_{1})_{D}+(\e_{2})_{D})=\\
      & =G(f)\circ (\e_{1}+\e_{2})_{D} .
\end{array}$$

Remembering that $Hom_{\A}(F(C),G(C))$ is a group when $\A$ is an additive category and taking
$$\overline{ 0}=\{\overline{0}_{C}=0\in Hom_{\A}(F(C),G(C)); C \in Ob(\C)\}$$
as neutral element and $$-\e=\{(-\e)_{C}=-(\e_{C}); C \in Ob(\C)\}$$ as the inverse elememt for any
$\e \in Hom_{\C(\A)}(F,G)$, we have that $\{Hom_{\C(\A)}(F,G), +\}$ is a group.

In order to prove that $\circ: Hom_{\C(\A)}(F,G)\times Hom_{\C(\A)}(G,H)\longrightarrow Hom_{\C(\A)}(F,H)$ is bi-additive, just consider that $\circ: Hom_{\A}(F(C),G(C))\times Hom_{\A}(G(C),H(C))\longrightarrow Hom_{\A}(F(C),H(C))$ is bi-additive for all $C \in Ob(\C)$.
 
\noindent{\bf (ii)} We define the object $0_{Ob}$ of $\C(\A))$ for which $Hom_{\C(\A)}(0_{Ob},0_{Ob})$
is the trivial group as follows:
$$\begin{array}{ cccc}
    0_{Ob}:  & \C  & \longrightarrow & \A \\
      &   C & \longmapsto & 0 \\
      & f \in Hom_{\C}(C,D) & \longmapsto & 0 \in Hom_{\A}(0,0).
\end{array}$$  

Where, by abuse of notation, $0$ is the zero object of $\A$ and $0$ is unique element of the trivial group  $Hom_{\A}(0,0)$, both found in $\A$ by its additivity.

\noindent{\bf (iii)} Given $F,G \in Ob(\C(\A))$ we must define the functor $F\oplus G$ and the natural transformations  $i_{F} \in Hom_{\C(\A)}(F,F\oplus G)$,  $i_{G} \in Hom_{\C(\A)}(G,F\oplus G)$,  $p_{F} \in Hom_{\C(\A)}(F\oplus G,F)$ and $p_{G} \in Hom_{\C(\A)}(F\oplus G,G)$, such that
\begin{equation}\label{cond soma direta}\begin{array}{l} p_{F}\circ i_{F}= Id_{F}, \  \   p_{G}\circ i_{G}= Id_{G}, \  \   i_{F}\circ p_{F} + i_{G}\circ p_{G}= Id_{F\oplus G}\\
 p_{G}\circ i_{F}=  p_{F}\circ i_{G}=0\end{array}\end{equation}
 
Let $C$ and $D$ be objects in $\C$ and $f \in Hom_{\C}(C,D)$. Since $\A$ is an additive category there are morphisms $i_{F(C)}$, $i_{F(D)}$, $p_{F(C)}$, $p_{F(D)}$, $i_{G(C)}$, $i_{G(D)}$, $p_{G(C)}$ and $p_{G(D)}$ which satisfy the equations in (\ref{cond soma direta}), and such that the following diagrams are commutative:
  $$\xymatrix{ F(C) \ar[rd]^{a_{f}} \ar[d]_-{i_{F(C)}}  & & & F(D) \\ 
  F(C) \oplus G(C) \ar@{-->}[r]^{a} & F(D) \oplus G(D)& F(C) \oplus G(C)\ar@{-->}[r]^{d} \ar[ru]^{e_{f}} \ar[rd]_{d_{f}}& F(D) \oplus G(D)\ar[u]_-{p_{F(D)}}\ar[d]^-{p_{G(D)}}  \\
  G(C)\ar[u]^-{i_{G(C)}}   \ar[ru]_{b_{f}} & &  & G(D)}$$
  
with $a_{f}=i_{F(D)} \circ F(f)$, $b_{f}=i_{G(D)} \circ G(f)$, $e_{f}=F(f) \circ p_{F(C)}$, $d_{f}=G(f) \circ p_{G(C)}$ and $a$ and $d$ are the unique morphism making the diagrams commute.
  
Notice that $a=d$. Indeed, by the diagrams above we have
\begin{equation}\begin{array}{l} 
a \circ i_{F(C)}=i_{F(D)}\circ F(f), \\
a \circ i_{G(C)}=i_{G(D)}\circ G(f), \\
\\
p_{F(D)}\circ d = F(f) \circ p_{F(C)},\\
p_{G(D)}\circ d = G(f) \circ p_{G(C)}.
\end{array}\end{equation}
Composing the first line with $p_{F(C)}$, the second line with $p_{G(C)}$ and adding one to the other we have, using the equations in (\ref{cond soma direta}),
$$ a=i_{F(D)}\circ F(f)\circ p_{F(C)}+i_{G(D)}\circ G(f)\circ p_{G(C)}. $$

Analogously, composing the third line with $i_{F(D)}$, the fourth line with $i_{G(D)}$ and adding one to the other we have
$$d=i_{F(D)}\circ F(f)\circ p_{F(C)}+i_{G(D)}\circ G(f)\circ p_{G(C)}.$$
Thus $a=d$, as desired. 

Therefore we can define the functor:
  $$\begin{array}{ cccc}
    F\oplus G:  & \C  & \longrightarrow & \A \\
      &   C & \longmapsto &(F \oplus G)(C):= F(C)\oplus G(C) \\
      & \xymatrix{C\ar[d]_{f} \\
      D} & \longmapsto & \xymatrix{(F\oplus G)(C) \ar[d]^{F\oplus G(f)=a=d} \\
      (F\oplus G)(D)} 
\end{array}$$
The natural  transformation $i_{F} \in Hom_{\C(\A)}(F,F\oplus G)$ is defined by $i_{F}=\{(i_{F})_{C}=i_{F(C)} \in Hom_{\A}(F(C),(F\oplus G)(C)); C \in \C\}$. Similarly, we define the natural transformations $i_{G} \in Hom_{\C(\A)}(G,F\oplus G)$, $p_{F} \in Hom_{\C(\A)}(F\oplus G,F)$ and $p_{G} \in Hom_{\C(\A)}(F\oplus G,G)$, where (\ref{cond soma direta}) are satisfied by the way were defined the morphisms $i_{F}, i_{G}, p_{F}$ and $p_{G}$. 

It follows that $\{F \oplus G,p_{F},p_{G}\}$ is the product of $F$ and $G$, while $\{F \oplus G,i_{F},i_{G}\}$ is the sum of
$F$ and $G$. 

Therefore, $\C(\A)$ is an additive category whenever $\A$ is additive. 

Next, we prove that if $\A$ is an abelian category, then $\C(\A)$ is also an abelian category.

\noindent{\bf AB 1:} We need to prove that given a morphism $\e \in Hom _{\C(\A)}(F,G)$, it has kernel and cokernel. We will show that any morphism has kernel. The argument for the existence of the cokernel is analogous.

First we must say who is the candidate to kernel of $\e$. For each $C$ object of $\C$ we have a morphism in $\A$, $\e_{C} \in Hom_{\A}(F(C),G(C))$, and as $\A$ is abelian $\e_{C}$ has a kernel $(K_{C},i_{C})$. So given $f \in Hom_{\C}(C,D)$ we have the following commutative diagram:
\begin{equation} \label{ab}\xymatrix{ K_{C}  \ar@{-->}[r]^-{K_{f}} \ar[d]_{i_{C}}& K_{D}\ar[d]^{i_{D}}\\
F(C) \ar[r]^-{F(f)}\ar[d]_{\e_{C}}&F(D)\ar[d]^{\e_{D}}\\
G(C)\ar[r]^-{G(f)}& G(D)}
\end{equation}

In order to guarantee the existence and uniqueness of $K_f$ in the diagram above, consider the diagram  $\e_{D}\circ F(f) \circ i_{C}= G(f) \circ \e_{C}\circ i_{C}$ and using that $(K_{C}, i_{C})$ is the kernel of $\e_{C}$ we know that $\e_{C} \circ i_{C}=0$ then $F(f)\circ i_{C} \in Ker((\e_{D})^{*})$. Now, since $(K_{D}, i_{D})$ is the kernel of $\e_{D}$, the 
sequence
$$\xymatrix{
0 \ar[r] & Hom_{\A}(K_{C},K_{D})\ar[r]^-{(i_{D})^*}&  Hom_{\A}(K_{C},F(D)) \ar[r]^-{(\e_{D})^*} & Hom_{\A}(K_{C},G(D))
}$$
is exact. Therefore, $Ker(\e^*_{D})=Im((i_{D})^*)$, hence there is $K_{f} \in Hom(K_{C},K_{D})$ such that $i_{D}\circ K_{f}= F(f)\circ i_{C}$. In addition, using once again the exactness of the sequence, we obtain that $(i_{D})^*$ is injective and thus conclude the uniqueness of $K_{f}$.

Therefore, the following functor is well-defined:

$$\begin{array}{ cccc}
   K:  & \C  & \longrightarrow & \A \\
      &   C & \longmapsto &K(C):=K_{C} \\
      & \xymatrix{C\ar[d]_{f} \\
      D} & \longmapsto & \xymatrix{K(C) \ar[d]^{K(f):=K_{f}} \\
     K(D)} 
\end{array}$$

Clearly, $i=\{i_{C}; C\in \C\}$ is a natural transformation between $K$ e $F$, since
$$\xymatrix{ K(C)\ar[r]^-{i_{C}}\ar[d]_{K(f)} & F(C)\ar[d]^{F(f)}\\
K(D)\ar[r]_{i_{D}} & F(D) }$$
is commutative. We now check that $(K,i)$ is the kernel of $\e$. 

Indeed, given $M \in Ob(\C(\A))$ we want to prove that the sequence
$$\xymatrix{0 \ar[r] & Hom_{\C(\A)}(M,K)\ar[r]^-{i^*}&  Hom_{\C(\A)}(M,F)) \ar[r]^-{\e^*} & Hom_{\C(\A)}(M,G)}$$
is exact, i. e., $Ker(i^*)=0$ and $Im(i^*)=Ker(\e^{*})$.

Taking $\fii \in Hom_{\C(\A)}(M,K)$ such that $i^*(\fii)=0$ then $i\circ \fii = 0$  and therefore, for all $C \in Ob(\C)$, we have $(i \circ \fii)_{C}=0$, or $i_{C} \circ \fii_{C}=0$, which means $\fii_{C} \in Ker((i_{C})^{*})$. But, for each $C \in Ob(\C)$ that the sequence 
\begin{equation}\label{ext}
0 \to \xymatrix{Hom_{\A}(M(C),K(C))\ar[r]^-{(i_{C})*}&  Hom_{\A}(M(C),F(C)) \ar[r]^-{(\e_{C})*} & Hom_{\A}(M(C)),G(C))}
\end{equation}
is exact, so $Ker((i_{C})^{*})=0$, and therefore $\fii_{C}=0$ for all object $C$ in $\C$, thus $\fii=0$ and $Ker(i^*)=0$.

We now prove that $Im(i^*)=Ker(\e^{*})$:

Consider the morphism $\alpha \in Hom_{\C(\A)}(M,F)$ such that $\alpha \in Im(i^{*})$. Then there is
$\alpha' \in Hom_{\C(\A)}(M,K)$ such that $\alpha= i \circ \alpha'$ 
so $\alpha_{C}= i_{C} \circ \alpha_{C}'$. By (\ref{ext}) we have $Im((i_{C})^{*})=Ker((\e_{C})^{*})$ for all $C \in Ob(\C)$, therefore $\e_{C}\circ \alpha_{C}=0$ for all $C$ so $\e \circ \alpha=0$ and then $\alpha \in Ker(\e^{*})$. Conversely, taking $\alpha \in Ker(\e^{*})$, $\e \circ \alpha =0$ which implies that $\e_{C} \circ \alpha_{C} =0$ and then $\alpha_{C} \in Ker((\e_{C})^{*})$, for all $C \in Ob(\C)$. Again by the exactness of (\ref{ext}) there is $\alpha_{C}' \in Hom_{\A}(M_{C},K_{C})$ such that $\alpha_{C}=i_{C} \circ \alpha_{C}'$. Assuming $\alpha=\{\alpha_{C}; C \in Ob(\C)\}$, we have $\alpha=i \circ \alpha'$. The proof that $\alpha$ is a morphism in $\C(\A)$ follow of the fact that $i$ and $\alpha'$ are morphisms in this category.

\noindent{\bf AB2:}  Let F and G objects in $\C(\A)$ and let $\e \in Hom_{\C(\A)}(F,G)$ be a monomorphism. We want to prove that $\e$ is the kernel of its cokernel. In other words,  if $(W,\rho)$ is cokernel of $\e$ we want to prove that $(F,\e)$ is the kernel of $\rho$: 

For all $M \in Ob(\C(\A))$ we will prove that the sequence 
\begin{equation}\label{extt}\xymatrix{0 \ar[r] & Hom_{\C(\A)}(M,F)\ar[r]^-{\e^*}&  Hom_{\C(\A)}(M,G)) \ar[r]^-{\rho^*} & Hom_{\C(\A)}(M,W).}\end{equation}
is exact.

Note that for each $C \in Ob(\C)$ the sequence
 $$\xymatrix{0 \ar[r] & Hom_{\A}(M_{C},F_{C})\ar[r]^-{(\e_{C})^*}&  Hom_{\A}(M_{C},G_{C})) \ar[r]^-{(\rho_{C})^*} & Hom_{\A}(M_{C},W_{C})}$$
is exact, so the  sequence (\ref{extt}) is also exact.

\noindent{\bf AB 3:}  Analogous to \textbf{AB 2}.

\noindent{\bf AB 4:}  We need to show that every morphism is the composition of an epimorphism with a monomorphism.

Let F and G be objects in $\C(\A)$ and take $\e \in Hom_{\C(\A)}(F,G)$. As $\A$ is abelian, for all $C \in Ob(\C)$, $\e_{C} \in Hom_{\A}(F(C),G(C))$ can be written by $\e_{C}=\beta_{C}\circ \alpha_{C}$, where $\alpha_{C} \in Hom_{\A}(F(C),H(C))$ is an epimorphism and $\beta_{C} \in Hom_{\A}(H(C),G(C))$ is a monomorphism. We know that for $f \in Hom_{\C}(C,D)$ the diagram
$$ \xymatrix{ F(C)\ar[r]^-{\e_{C}}\ar[d]_{F(f)} & G(C)\ar[d]^{G(f)} \\
F(D) \ar[r]_-{\e_{D}} & G(D) }$$
is commutative and, by Lemma \ref{lema pri}, for each object $C$ in $\C$ there is unique $H(f) \in Hom(H(C), H(D))$ such that the following  diagram is commutative:

$$ \xymatrix{ F(C)\ar[r]^-{\alpha_{C}}\ar[d]_{F(f)} & H(C)\ar[r]^-{\beta_{C}}\ar[d]^{H(F)}& G(C)\ar[d]^{G(f)} \\
F(D) \ar[r]_-{\alpha_{D}} & H(D)\ar[r]_-{\beta_{D}} & G(D) }$$

Therefore, we have that $H$ is a functor, $\alpha=\{\alpha_{C}; C \in Ob(\C)\}$ and $\beta=\{\beta_{C}; C \in Ob(\C)\}$  are natural transformation sucht that $\alpha \in Hom_{\C(\A)}(F,H)$ is an epimorphism, and $\beta \in Hom_{\C(\A)}(H,G)$ is monomorphism and $\e=\beta \circ \alpha$.

This concludes the proof that $\C(\A)$ is an abelian category.
\end{proof}

Recall that an abelian category $\A$ is said to be \emph{complete} if the product of any family of objects exists in $\A$; that is, given a family $\{A_{j}\}_{j \in J}$ of objects of $\A$, the product $\prod_{j \in J}A_{j}$ is an object of $\A$.

\begin{lema}\label{comp}
If $\A$ be a complete abelian category, then so is $\C(\A)$. 
\end{lema}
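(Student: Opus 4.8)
The plan is to mimic the strategy already used in Proposition~\ref{abeli}: construct the relevant limit objectwise in $\A$ and then assemble these into a functor, checking that it satisfies the required universal property in $\C(\A)$. Concretely, suppose $\{F_j\}_{j\in J}$ is a family of objects of $\C(\A)$, that is, a family of functors $F_j:\C\to\A$. First I would define the candidate product $P:=\prod_{j\in J}F_j$ by setting, for each object $C\in Ob(\C)$,
$$P(C):=\prod_{j\in J}F_j(C),$$
which exists in $\A$ precisely because $\A$ is assumed complete. I would write $\pi_{C,j}:P(C)\to F_j(C)$ for the structure morphisms of this product in $\A$.

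Next I would make $P$ into a functor. Given a morphism $f:C\to D$ in $\C$, each $F_j$ gives $F_j(f):F_j(C)\to F_j(D)$, and composing with the projections $\pi_{D,j}$ yields a family of morphisms $P(C)\to F_j(D)$; by the universal property of the product $P(D)=\prod_j F_j(D)$, there is a unique morphism $P(f):P(C)\to P(D)$ satisfying $\pi_{D,j}\circ P(f)=F_j(f)\circ\pi_{C,j}$ for all $j$. Functoriality, i.e. $P(\mathrm{id}_C)=\mathrm{id}_{P(C)}$ and $P(g\circ f)=P(g)\circ P(f)$, follows from the uniqueness clause in the universal property, exactly as uniqueness was used to verify functoriality of $K$ and of $F\oplus G$ in the previous proof. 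The projections then assemble into natural transformations $\pi_j=\{\pi_{C,j}\}_{C\in Ob(\C)}:P\to F_j$, naturality being precisely the defining relation for $P(f)$.

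Finally I would verify the universal property of the product in $\C(\A)$. Given any $M\in Ob(\C(\A))$ together with natural transformations $\psi_j:M\to F_j$, I would produce the unique $\psi:M\to P$ with $\pi_j\circ\psi=\psi_j$ by defining $\psi_C:M(C)\to P(C)$ objectwise through the universal property of $P(C)$ in $\A$ (using the components $(\psi_j)_C$), and then checking that the family $\{\psi_C\}$ is natural, again via the uniqueness built into the product in $\A$. Uniqueness of $\psi$ as a morphism in $\C(\A)$ is inherited objectwise from uniqueness in $\A$. The main point to be careful about is naturality of the assembled morphisms $P(f)$ and $\psi$, but this is routine once one systematically invokes the uniqueness part of the universal property in $\A$; there is no genuine obstacle, since completeness of $\A$ supplies every limit needed at the object level and the functor-category structure is built entirely out of these objectwise data.
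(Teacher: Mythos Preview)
Your proposal is correct and follows exactly the approach the paper indicates: the paper's own ``proof'' consists solely of the remark that the argument is analogous to the construction of $F\oplus G$ in part \textbf{(iii)} of Proposition~\ref{abeli}, and what you have written is precisely the natural extension of that objectwise construction from binary biproducts to arbitrary products, with the same use of uniqueness in the universal property to obtain functoriality and naturality. There is nothing to add.
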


The proof of this result is analogous to the proof of property \textbf{(iii)} of additivity for $\C(\A)$.

%The following Lemma is proved in \cite[2.3.10,  p. 41]{Weibel}.
%
% \begin{lema}\label{suficientes injetivos}
%Let $\A$ and $\B$ be abelian categories. If an additive functor $R: \B \longrightarrow \A$ is right adjoint to an exact functor $L:\A \longrightarrow \B$ and $I$ is an injective object in $\B$, then $R(I)$ is an injective object in $\A$.
% 
%Dually, if an additive functor $L:\A \longrightarrow \B$ is left adjoint to an exact functor $R: \B \longrightarrow \A$ and $P$ is a projective object in $\A$, then $L(P)$ is a projective object in $\B$.
%\end{lema}

The following Proposition provides a sufficient condition that guarantees that the functor category $\C(\A)$ has enough injectives; it is Exercise 2.3.13 in \cite[page 43]{Weibel}.
 
\begin{prop} \label{p suf inj}
If $\A$ is a complete abelian category with enough injectives then $\C(\A)$ also has enough injectives.
\end{prop}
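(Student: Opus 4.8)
The plan is to produce, for each object of $\C$, an exact evaluation functor equipped with a right adjoint, and then to push injectives of $\A$ forward along these right adjoints so as to embed an arbitrary object of $\C(\A)$ into an injective one.

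For each $C \in Ob(\C)$ I would first consider the evaluation functor $\mathrm{ev}_{C} : \C(\A) \lra \A$ sending $F$ to $F(C)$ and a natural transformation $\e$ to its component $\e_{C}$. As observed in the proof of Proposition \ref{abeli}, kernels and cokernels in $\C(\A)$ are computed objectwise, so $\mathrm{ev}_{C}$ is exact. Next I would construct its right adjoint $R_{C} : \A \lra \C(\A)$, defined on objects by
$$ R_{C}(A)(D) = \prod_{g \in Hom_{\C}(D,C)} A , $$
where for a morphism $h : D \to D'$ the map $R_{C}(A)(h)$ is induced on the products by precomposition $Hom_{\C}(D',C) \to Hom_{\C}(D,C)$, $g \mapsto g \circ h$; these products exist because $\A$ is complete and $\C$ is small. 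The adjunction isomorphism $Hom_{\C(\A)}(F, R_{C}(A)) \cong Hom_{\A}(F(C), A)$ is characterized by sending $\psi : F(C) \to A$ to the natural transformation whose $D$-component has $g$-entry $\psi \circ F(g)$ for $g \in Hom_{\C}(D,C)$. I expect the verification of the naturality of this bijection, together with the check that $R_C(A)$ is genuinely functorial, to be the main technical point of the argument; everything else is then formal.

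Having the adjoint pair, I would invoke the standard fact that a right adjoint of an exact functor preserves injectives. Concretely, for $I$ injective in $\A$ the functor $Hom_{\C(\A)}(-, R_{C}(I)) \cong Hom_{\A}(\mathrm{ev}_{C}(-), I)$ is the composite of the exact functor $\mathrm{ev}_{C}$ with the exact functor $Hom_{\A}(-, I)$, hence is exact; therefore $R_{C}(I)$ is injective in $\C(\A)$.

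Finally, given an arbitrary $F \in Ob(\C(\A))$, I would use that $\A$ has enough injectives to choose, for each $C$, a monomorphism $\iota_{C} : F(C) \hookrightarrow I_{C}$ with $I_{C}$ injective in $\A$. The unit of the adjunction provides $F \to R_{C}(F(C))$, and postcomposing with $R_{C}(\iota_{C})$ yields $F \to R_{C}(I_{C})$. Forming the product over the set $Ob(\C)$ gives
$$ F \lra J := \prod_{C \in Ob(\C)} R_{C}(I_{C}) , $$
which is an object of $\C(\A)$ by Lemma \ref{comp} and is injective, since each $R_{C}(I_{C})$ is injective and a product of injectives is injective. To see that $F \to J$ is a monomorphism it suffices, monomorphisms in $\C(\A)$ being objectwise, to check it at each $D$; but composing $F(D) \to J(D)$ with the projection onto the factor indexed by $C = D$ and then onto the entry $g = \mathrm{id}_{D}$ recovers exactly $\iota_{D} : F(D) \hookrightarrow I_{D}$, which is monic, forcing $F(D) \to J(D)$ to be monic. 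Thus every object of $\C(\A)$ embeds into an injective one, and $\C(\A)$ has enough injectives. Here the completeness of $\A$ is precisely what is needed to form the defining products of $R_{C}$ and the object $J$, while the objectwise computation of kernels and cokernels from Proposition \ref{abeli} is what makes the evaluation functors exact.
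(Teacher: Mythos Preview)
Your argument is correct and is precisely the standard proof. Note, however, that the paper does not actually supply its own proof of this proposition: it simply records the statement and refers the reader to Weibel \cite[Exercise 2.3.13, p.~43]{Weibel}. So there is nothing in the paper to compare against beyond that citation. Your construction---evaluation functors $\mathrm{ev}_C$ with right adjoints $R_C(A)(D)=\prod_{g\in Hom_\C(D,C)}A$, preservation of injectives by right adjoints of exact functors, and the product embedding $F\hookrightarrow\prod_C R_C(I_C)$---is exactly the argument Weibel's exercise is asking for, and your verification that the embedding is monic by projecting onto the $(C=D,\,g=\mathrm{id}_D)$ factor is clean.
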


\begin{coro}\label{c suf inj}
Let $\A$ be an abelian category with enough injectives and let $\C$ be a category with a finite number of objects and morphisms. Then $\C(\A)$ has enough injectives.
\end{coro}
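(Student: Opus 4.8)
The plan is to reduce to the construction behind Proposition \ref{p suf inj}. We cannot invoke that proposition directly, since here $\A$ is only assumed abelian with enough injectives, not complete. The key observation, however, is that when $\C$ has finitely many objects and morphisms, every product occurring in the standard construction of injective objects in $\C(\A)$ is \emph{finite}, and finite products (biproducts) exist in any abelian category. Thus the argument proving Proposition \ref{p suf inj} goes through once ``complete'' is replaced by ``finite'', and this is the route I would follow. Concretely, for each $C \in Ob(\C)$ I would first introduce the evaluation functor $ev_{C}:\C(\A)\to\A$, $F\mapsto F(C)$ and $\eta\mapsto\eta_{C}$; since kernels and cokernels in $\C(\A)$ are computed objectwise (as in the proof of Proposition \ref{abeli}), $ev_{C}$ is exact.

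Next I would construct its right adjoint $r_{C}:\A\to\C(\A)$ by the right Kan extension formula
$$ r_{C}(A)(C') \;=\; \prod_{g\in Hom_{\C}(C',C)} A, $$
with the evident action on morphisms of $\C$ given by reindexing along composition. Because $Hom_{\C}(C',C)$ is finite by hypothesis, each such product is a finite biproduct and hence exists in $\A$. One then checks the adjunction isomorphism $Hom_{\C(\A)}(F, r_{C}(A))\cong Hom_{\A}(F(C),A)=Hom_{\A}(ev_{C}(F),A)$. Since the left adjoint $ev_{C}$ is exact, and in particular preserves monomorphisms, its right adjoint $r_{C}$ preserves injectives: if $I$ is injective in $\A$, then $r_{C}(I)$ is injective in $\C(\A)$.

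Given any $F\in Ob(\C(\A))$, I would then use that $\A$ has enough injectives to choose, for each $C$, a monomorphism $F(C)\hookrightarrow I_{C}$ with $I_{C}$ injective. By the adjunction, each of these corresponds to a morphism $F\to r_{C}(I_{C})$ in $\C(\A)$, and assembling them over the finite set $Ob(\C)$ yields a morphism into $I:=\prod_{C\in Ob(\C)} r_{C}(I_{C})$. This product is finite, hence exists, and being a finite product of injectives it is itself injective. To finish I would verify that $F\to I$ is monic; since monomorphisms in $\C(\A)$ are detected objectwise, it suffices to check at each $C_{0}$, where the component $F(C_{0})\to I(C_{0})$ contains, after projecting $r_{C_{0}}(I_{C_{0}})(C_{0})=\prod_{g\in Hom_{\C}(C_{0},C_{0})} I_{C_{0}}$ onto the factor indexed by $g=id_{C_{0}}$, precisely the chosen monomorphism $F(C_{0})\hookrightarrow I_{C_{0}}$. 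A morphism into a product one of whose projections is monic is itself monic, so $F(C_{0})\to I(C_{0})$ is monic for every $C_{0}$, whence $F\to I$ is a monomorphism.

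I expect the only substantive point — really the heart of the matter — to be the bookkeeping that isolates \emph{where} completeness was used in Proposition \ref{p suf inj}: namely, solely to guarantee existence of the products indexed by the hom-sets $Hom_{\C}(C',C)$ and by $Ob(\C)$. Once one sees that finiteness of $\C$ makes all of these finite, the rest of the argument (the adjunction, exactness of evaluation, preservation of injectives, and objectwise detection of monomorphisms) is entirely formal and independent of any completeness assumption on $\A$.
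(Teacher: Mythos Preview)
Your proposal is correct and is exactly the argument the paper intends: the corollary is stated immediately after Proposition \ref{p suf inj} (itself cited without proof from \cite{Weibel}) precisely because the standard right-Kan-extension construction behind that proposition uses only products indexed by $Hom_{\C}(C',C)$ and by $Ob(\C)$, all of which become finite biproducts when $\C$ has finitely many objects and morphisms. Your identification of this point, together with the explicit verification of the adjunction and the objectwise monicity check, fills in the details the paper leaves implicit.
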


\subsection{$\A$ as a full subcategory of $\C(\A)$}

Let $\A$ be an abelian category. Set $D \in Ob(\C)$ and $A \in Ob(\A)$, and consider the following functor $I_D(A)$ in
$\C(\A)$:

$$I_D(A)(C)=\left\{\begin{array}{cc}
0 & \mbox{se} \ \  C \neq D \\
A & \mbox{se} \ \ C=D;
\end{array}\right.$$
and given $t \in Hom_{\C}(C,D)$:
$$I_D(A)(t)=\left\{\begin{array}{cc}
Id_A & \mbox{se} \  \ t=Id_D \\
0 & \mbox{caso contrário.} 
\end{array}\right.$$

Taking $f \in Hom_{\A}(A_1, A_2)$, we can define a natural transformation $\varphi=\{\varphi_C; C \in Ob(\C)\}$ from $I_D(A_1)$ to $I_D(A_2)$ in the following manner:

$$\varphi_C=\left\{\begin{array}{cc}
f & \mbox{if} \ \ C=D\\
0 & \mbox{otherwise.} 
\end{array}\right.$$

Thus we have the functor:
$$\begin{array}{ cccc}
    I_D:  & \A  & \longrightarrow & \C(\A) \\
          & \xymatrix{A_1\ar[d]^f \\ A_2} & \longmapsto &  \xymatrix{I_D(A_1)\ar[d]^{\varphi} \\I_D( A_2).}
\end{array}$$  

\begin{prop} \label{ID pleno e fiel}
The functor $I_D$ is full and faithful for each $D \in Ob(\C)$.
\end{prop}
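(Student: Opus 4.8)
The plan is to establish full faithfulness directly from the definition, by showing that for every pair $A_1, A_2 \in Ob(\A)$ the map on morphism sets
$$ Hom_{\A}(A_1,A_2) \lra Hom_{\C(\A)}(I_D(A_1),I_D(A_2)), \qquad f \longmapsto \varphi, $$
which sends $f$ to the natural transformation $\varphi$ defined above, is a bijection. Recall that \emph{faithful} means this map is injective for all $A_1,A_2$ and \emph{full} means it is surjective for all $A_1,A_2$, so it suffices to verify these two properties separately. Since $I_D$ has already been checked to be a functor, any $\varphi$ produced by this map is automatically a natural transformation, and no naturality needs to be re-verified.

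Injectivity (faithfulness) is immediate from inspecting the component at $D$: if $f,g \in Hom_{\A}(A_1,A_2)$ give rise to the same natural transformation, then equality of their $D$-components yields $f = \varphi_D = g$, since by construction the value at $D$ of the associated natural transformation is precisely the morphism one starts with. For surjectivity (fullness) I would begin with an arbitrary $\eta \in Hom_{\C(\A)}(I_D(A_1),I_D(A_2))$ and argue that its components are completely determined. For each $C \neq D$ we have $I_D(A_1)(C) = I_D(A_2)(C) = 0$, and because $0$ is simultaneously an initial and a terminal object of the abelian category $\A$ there is exactly one morphism $0 \to 0$; hence $\eta_C = 0$ for all $C \neq D$. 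At the object $C=D$ the component $\eta_D : A_1 \to A_2$ is an ordinary morphism of $\A$. Putting $f := \eta_D$ and comparing $\eta$ with $I_D(f) = \varphi$ componentwise (namely $\varphi_D = f = \eta_D$ and $\varphi_C = 0 = \eta_C$ for $C \neq D$) shows $\eta = \varphi$, so $\eta$ lies in the image and the map is surjective.

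Since the entire argument rests only on the fact that $0$ is a zero object of $\A$, there is no genuine difficulty to overcome; the single point deserving care is the claim that every component of $\eta$ away from $D$ must vanish, which is exactly what the zero-object property supplies. Having verified injectivity and surjectivity for all $A_1,A_2$, I conclude that $I_D$ is full and faithful.
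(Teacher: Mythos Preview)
Your proof is correct and follows essentially the same approach as the paper: both arguments hinge on the fact that $0$ is a zero object, so the components of any natural transformation $I_D(A_1)\to I_D(A_2)$ away from $D$ are forced to vanish, leaving only the $D$-component to determine the preimage morphism. Your version is simply a more explicit unpacking of the paper's terse observation that $Hom_{\A}(0,0)$, $Hom_{\A}(0,A)$ and $Hom_{\A}(A,0)$ are trivial.
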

\begin{proof} Let $A_1,A_2 \in Ob(\A)$. Any morphism $f : A_1 \lra A_2$ generates an unique natural transformation $\varphi$ as defined above. On the other hand, since $Hom_{\A}(0,0)$, $Hom_{\A}(0,A)$ and $Hom_{\A}(A,0)$ are trivial groups for any $A \in Ob(\A)$, the choice of a natural transformation between $I_D(A_1)$ and $I_D( A_2)$, determines an unique morphism in $Hom_{\A}(A_1,A_2)$. It follows that
$$Hom_{\A}(A_1,A_2) \simeq Hom_{\C(\A)}(I_D(A_1),I_D(A_2)),$$
as desired.
\end{proof}

\begin{prop} \label{ID exato}
The functor $I_D$ is an exact functor for each $D \in Ob(\C)$.
\end{prop}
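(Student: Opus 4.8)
The plan is to reduce everything to the fact, already implicit in the proof of Proposition \ref{abeli}, that kernels and cokernels in $\C(\A)$ are computed objectwise. Indeed, in part \textbf{AB 1} the kernel of a morphism $\e\in Hom_{\C(\A)}(F,G)$ was constructed as the functor $K$ with $K(C)=K_C$ equal to the kernel of $\e_C$ in $\A$ for every $C\in Ob(\C)$, and the cokernel is built dually. Consequently a sequence $0\to F\to G\to H\to 0$ in $\C(\A)$ is exact if and only if the sequence $0\to F(C)\to G(C)\to H(C)\to 0$ is exact in $\A$ for every object $C$ of $\C$. This objectwise exactness criterion is the engine of the argument.

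First I would record that $I_D$ is additive. By Proposition \ref{ID pleno e fiel} the assignment $f\mapsto\varphi$ gives a bijection $Hom_{\A}(A_1,A_2)\to Hom_{\C(\A)}(I_D(A_1),I_D(A_2))$; since the only nonzero component of $\varphi$ is $\varphi_D=f$ and addition of natural transformations is componentwise, this bijection is in fact a group isomorphism, so $I_D$ is additive on Hom-groups. Moreover $I_D$ preserves finite direct sums, because $I_D(A_1\oplus A_2)$ is the functor equal to $A_1\oplus A_2$ at $D$ and to $0$ elsewhere, which matches the objectwise description of $I_D(A_1)\oplus I_D(A_2)$ furnished by part \textbf{(iii)} of Proposition \ref{abeli}.

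Next, to establish exactness it suffices to show that $I_D$ carries short exact sequences to short exact sequences. So I would take an arbitrary short exact sequence $0\to A_1\xrightarrow{g} A_2\xrightarrow{h} A_3\to 0$ in $\A$ and apply $I_D$, obtaining the sequence $0\to I_D(A_1)\to I_D(A_2)\to I_D(A_3)\to 0$ in $\C(\A)$. By the objectwise criterion above, its exactness is checked one object of $\C$ at a time. For $C=D$ the induced sequence is precisely $0\to A_1\xrightarrow{g} A_2\xrightarrow{h} A_3\to 0$, which is exact by hypothesis; for every $C\neq D$ all three terms vanish, so the sequence $0\to 0\to 0\to 0\to 0$ is trivially exact.

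Since exactness holds at every object of $\C$, the image sequence is exact in $\C(\A)$, and therefore $I_D$ is an exact functor. The only delicate point is the objectwise exactness criterion itself; once it is extracted from the construction of kernels and cokernels in Proposition \ref{abeli}, the remainder is immediate, precisely because the functor $I_D$ is by design concentrated at the single object $D$. I do not anticipate any genuine obstacle beyond making that criterion explicit.
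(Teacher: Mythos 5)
Your proposal is correct and follows essentially the same route as the paper: both reduce exactness to the objectwise criterion coming from the construction of kernels and cokernels in Proposition \ref{abeli}, and then observe that the image sequence is the original exact sequence at the object $D$ and the zero sequence at every other object. The additivity remark is a harmless extra that the paper omits.
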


\begin{proof} Given an exact sequence in $\A$, $\xymatrix{0 \ar[r] & A' \ar[r]^f & A \ar[r]^g & A'' \ar[r] & 0}$, we need to prove that $\xymatrix{0 \ar[r] & I_D(A') \ar[r]^{I_D(f)} & I_D(A) \ar[r]^{I_D(g)} & I_D(A'') \ar[r] & 0}$ is exact in $\C(\A)$.

Indeed, $I_D(f)$ is monomorphism whenever, for each $C \in Ob(\C)$, $I_D(f)_C$ is monomorphism. However, $I_D(f)_D=f$ and $f$ is monomorphism. In an analogous way, we have that $I_D(g)$ is epimorphism. Moreover, as $Ker(g)=Im(f)$ we have that $Ker(I_D(g))=Im(I_D(f))$, thus $Ker(I_D(g))_C=Im(I_D(f))_C$ for each $C \in Ob(\C)$.
\end{proof}

\subsection{The induced functor}
Any functor between categories $\A$ and $\B$ induces in a natural way a functor between $\C(\A)$ and $\C(\B)$ which inherits some of the properties of the original functor. More precisely, consider the following definition.

\begin{defini} Let $\A$ and $\B$ be categories and let $F:\A \longrightarrow \B$ be a functor. The \emph{induced functor} $F_{\C}: \C(\A) \lra \C(\B)$ is defined by:
 $$\begin{array}{ cccc}
    F_{\C}:  & \C(\A)  & \longrightarrow & \C(\B) \\
      &   G & \longmapsto &F_{\C}(G):= F\circ G \\
      & \xymatrix{G\ar[d]_{\e} \\
      H} & \longmapsto & \xymatrix{F\circ G \ar[d]^{F_{\C}(\e)} \\
      F\circ H} 
\end{array}$$

Where $F_{\C}(\e)=\{(F_{\C}(\e))_{C} := F(\e_{C}) \in Hom_{\B}(F(G(C)),F(H(C))); C \in Ob(\C)\}$.
\end{defini}

Let us now see what properties $ F_ {\C} $ inherits from $ F $.

\begin{prop}\label{prop funtor aditivo}
Let $\A$ and $\B$ be additives categories and let $F:\A\longrightarrow \B$ be an additive functor. Then the induced functor $F_{\C}$ is also additive.
\end{prop}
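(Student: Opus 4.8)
The plan is to show that additivity of $F_{\C}$ reduces, componentwise, to the additivity of $F$ itself, exploiting the fact that all the relevant structure in $\C(\A)$ and $\C(\B)$ is defined pointwise. Recall that for a functor between additive categories, being additive means that for any objects $G,H \in Ob(\C(\A))$ the induced map on Hom-groups
$$F_{\C}: Hom_{\C(\A)}(G,H) \lra Hom_{\C(\B)}(F_{\C}(G),F_{\C}(H))$$
is a group homomorphism; equivalently, $F_{\C}$ preserves finite direct sums (equivalently, the zero object and binary biproducts). I would establish the Hom-additivity directly, since it is the cleanest formulation and the pointwise structure from Proposition \ref{abeli} makes it immediate.

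First I would fix $\e_1, \e_2 \in Hom_{\C(\A)}(G,H)$ and recall that, by the construction in part \textbf{(i)} of the proof of Proposition \ref{abeli}, their sum is defined componentwise by $(\e_1 + \e_2)_C = (\e_1)_C + (\e_2)_C$ for each $C \in Ob(\C)$. Likewise, the sum $F_{\C}(\e_1) + F_{\C}(\e_2)$ in $Hom_{\C(\B)}(F_{\C}(G), F_{\C}(H))$ is computed componentwise. By the definition of the induced functor, $(F_{\C}(\e))_C = F(\e_C)$, so the whole question collapses to a componentwise identity. Then I would compute, for each $C \in Ob(\C)$,
$$(F_{\C}(\e_1 + \e_2))_C = F\bigl((\e_1 + \e_2)_C\bigr) = F\bigl((\e_1)_C + (\e_2)_C\bigr) = F((\e_1)_C) + F((\e_2)_C),$$
where the last equality is exactly the additivity of $F$ on $Hom_{\A}(G(C), H(C))$. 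The right-hand side equals $(F_{\C}(\e_1))_C + (F_{\C}(\e_2))_C = (F_{\C}(\e_1) + F_{\C}(\e_2))_C$. Since this holds for every $C$, the two natural transformations agree, giving $F_{\C}(\e_1 + \e_2) = F_{\C}(\e_1) + F_{\C}(\e_2)$, which is the desired additivity.

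I expect no serious obstacle here: the argument is entirely formal and the only genuine input is the componentwise definition of addition established earlier together with the hypothesis that $F$ is additive. The one point requiring a word of care is to confirm that $F_{\C}(\e)$ is genuinely a natural transformation (so that the Hom-sets in question are well-defined), but this is guaranteed because $F$ applied to a commutative naturality square of $\e$ yields a commutative square, $F$ being a functor. If a reader prefers the biproduct characterization of additivity, one could alternatively note that $F_{\C}$ sends the zero functor $0_{Ob}$ to $0_{Ob}$ (since $F$ preserves the zero object when additive) and carries the biproduct data $\{F \oplus G, i_F, i_G, p_F, p_G\}$ constructed in part \textbf{(iii)} to the corresponding biproduct data in $\C(\B)$, again because $F$ preserves biproducts componentwise. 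Either route is short; I would present the Hom-group version as it is the most direct consequence of the definitions.
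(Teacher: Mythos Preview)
Your proof is correct and follows essentially the same approach as the paper: both reduce the identity $F_{\C}(\e_1+\e_2)=F_{\C}(\e_1)+F_{\C}(\e_2)$ to the componentwise additivity of $F$, using that $(F_{\C}(\e))_C=F(\e_C)$ and that addition of natural transformations is defined pointwise. Your write-up is in fact slightly more careful than the paper's (you note why $F_{\C}(\e)$ is a natural transformation and mention the biproduct alternative), but the core argument is identical.
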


\begin{proof}
Set $\alpha , \beta \in Hom_{\C(\A)}(R,S)$, we need to prove that $F_{\C}(\alpha + \beta)=F_{\C}(\alpha)+ F_{\C}(\beta).$
By definition, 
$$F_{\C}(\alpha +\beta)  = \{F(\alpha_C+\beta_C) \in Hom_{\C(\B)}(F(R(C)),F(S(C)): C \in Ob(\C)\},$$
Since $F$ is additive, $F(\alpha_C+\beta_C)=F(\alpha_C)+F(\beta_C)$ and
$$\begin{array}{ccl}
F_{\C}(\alpha +\beta)  & = & \{F(\alpha_C)+F(\beta_C) : C \in Ob(\C)\}\\
& = &  \{F(\alpha_C): C \in Ob(\C)\} \bigcup  \{F(\beta_C): C \in Ob(\C)\}\\
 &=& F_{\C}(\alpha)+F_{\C}(\beta).
 \end{array}$$

\end{proof}

\begin{prop}\label{prop funtor exato}
Let $\A$ and $\B$ be abelian categories and let $F:\A\longrightarrow \B$ be a functor. $F$ is exact if, and only if, the induced functor $F_{\C}$ is exact.
\end{prop}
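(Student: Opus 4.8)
The plan is to reduce both implications to the single fact that short exact sequences in a functor category are detected objectwise. First I would record the following observation, which is already implicit in the construction of kernels and cokernels in the proof of Proposition \ref{abeli}: the kernel of a morphism $\eta\colon G\to H$ in $\C(\A)$ is the functor $C\mapsto \ker(\eta_C)$, and dually for cokernels, so a sequence $0\to G'\to G\to G''\to 0$ in $\C(\A)$ is exact if and only if, for every object $C$ of $\C$, the sequence $0\to G'(C)\to G(C)\to G''(C)\to 0$ is exact in $\A$. Combined with the defining identities $F_{\C}(G)(C)=F(G(C))$ and $F_{\C}(\eta)_C=F(\eta_C)$, this turns the whole statement into a pointwise computation taking place in $\A$ and $\B$. (As usual it suffices to test exactness on short exact sequences.)

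For the forward direction, suppose $F$ is exact and let $0\to G'\to G\to G''\to 0$ be a short exact sequence in $\C(\A)$. By the observation it is exact at every object $C$, so applying the exact functor $F$ yields a short exact sequence $0\to F(G'(C))\to F(G(C))\to F(G''(C))\to 0$ in $\B$ for each $C$. Rewriting these as $0\to F_{\C}(G')(C)\to F_{\C}(G)(C)\to F_{\C}(G'')(C)\to 0$ and invoking the observation once more, now in $\C(\B)$, shows that $0\to F_{\C}(G')\to F_{\C}(G)\to F_{\C}(G'')\to 0$ is exact; hence $F_{\C}$ is exact.

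For the converse, suppose $F_{\C}$ is exact and fix any object $D$ of $\C$. Given a short exact sequence $0\to A'\to A\to A''\to 0$ in $\A$, I would first apply the exact functor $I_D$ of Proposition \ref{ID exato} to obtain a short exact sequence $0\to I_D(A')\to I_D(A)\to I_D(A'')\to 0$ in $\C(\A)$, and then apply the (assumed) exact functor $F_{\C}$ to obtain a short exact sequence in $\C(\B)$. Evaluating this last sequence at the object $D$ and using that $F_{\C}(I_D(A))(D)=F(I_D(A)(D))=F(A)$, with the morphisms restricting to $F$ applied to the originals (since $I_D(f)_D=f$ gives $F_{\C}(I_D(f))_D=F(f)$), recovers precisely $0\to F(A')\to F(A)\to F(A'')\to 0$, which is therefore exact in $\B$. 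Thus $F$ is exact.

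The only real content lies in the objectwise characterization of exactness established in the first paragraph; once that is in place, both directions are essentially formal. The forward implication is a direct transport of pointwise exactness through $F$, while the delicate point of the converse — producing a sequence in $\C(\A)$ whose image under $F_{\C}$ reads off the desired sequence in $\B$ — is handled cleanly by the embedding $I_D$, whose exactness was already proved. I expect no genuine obstacle beyond making the objectwise description of kernels and cokernels explicit and, for the converse, silently assuming $\C$ has at least one object so that some $I_D$ is available.
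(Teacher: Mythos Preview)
Your proposal is correct and follows essentially the same approach as the paper: both directions rest on the objectwise characterization of exactness in $\C(\A)$, and the converse is handled via the exact embedding $I_D$ of Proposition~\ref{ID exato}, evaluated at $D$. The paper phrases the converse using the identity $F_{\C}\circ I_D = I_D\circ F$ before evaluating, while you evaluate directly, but this is a cosmetic difference.
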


\begin{proof}
Let
\begin{equation}\label{ext1}
\xymatrix{0 \ar[r] & R' \ar[r]^{\e} & R \ar[r]^{\xi} & R'' \ar[r] & 0}
\end{equation}
be an exact sequence in $\C(\A)$. We must to prove that 
\begin{equation}\label{ext2}
\xymatrix{0 \ar[r] & F_{\C}(R') \ar[r]^{F_{\C}(\e)} & F_{\C}(R) \ar[r]^{F_{\C}(\xi)} & F_{\C}(R'') \ar[r] & 0}
\end{equation}
is an exact sequence in $\C(\B)$.
 
Note that (\ref{ext1}) is exact if, and only if, for each object $C$ de $\C$, the sequence
$$ \xymatrix{0 \ar[r] & R'(C) \ar[r]^{\e_{C}} & R(C) \ar[r]^{\xi_{C}} & R''(C) \ar[r] & 0} $$ 
is also exact in $\A$. Therefore since $F$ is an exact functor, the sequence
$$ \xymatrix{0 \ar[r] & F(R'(C)) \ar[r]^{F(\e_{C})} & F(R(C)) \ar[r]^{F(\xi_{C})} & F(R''(C)) \ar[r] & 0} $$
is exact in $\B$. However, by definition of $F_{\C}$, $F(R'(C))=F_{\C}R'(C)$, $F(\e_{C})=(F_{\C}(\e))_{C}$, and analogously for $R$, $R''$ and $\xi$. It follows that, for each $C \in Ob(\C)$, the sequence 
$$ \xymatrix{0 \ar[r] & F_{\C}R'(C) \ar[r]^{(F_{\C}(\e))_{C}} & F_{\C}R(C) \ar[r]^{(F_{\C}(\xi))_{C}} & F_{\C}R''(C) \ar[r] & 0} $$
is exact in $\B$ so (\ref{ext2}) is exact in $\C(\B)$. 

Conversely, let $\xymatrix{0 \ar[r] & A' \ar[r]^f & A \ar[r]^g & A'' \ar[r] & 0}$ be an exact sequence in $\A$. By Proposition \ref{ID exato}, we have that $I_D$ is an exact functor for all $D \in Ob(\C)$, then the sequence \\  
$\xymatrix{0 \ar[r] & I_D(A') \ar[r]^{I_D(f)} & I_D(A) \ar[r]^{I_D(g)} & I_D(A'') \ar[r] & 0}$ is exact in $\C(\A)$.

By hypothesis, $F_{\C}$ is an exact functor and
$$\begin{array}{cc}F_{\C}(I_D(A))= F \circ I_D(A) =I_D(F(A)) & \mbox{and}\\ 
F_{\C}(I_D(f))= F \circ I_D(f) =I_D(F(f)),
\end{array}$$
for all $A \in Ob(\A)$, thus the sequence 
$$ \xymatrix{0 \ar[r] & I_D(F(A')) \ar[r]^{I_D(F(f))} & I_D(F(A)) \ar[r]^{I_D(F(g))} & I_D(F(A'')) \ar[r] & 0} $$
is exact in $\C(\B)$. Therefore, for each $C \in Ob(\C)$ the sequence 
$$ 0 \to \xymatrix{(I_D(F(A')))_C \ar[rr]^{(I_D(F(f)))_C} & & (I_D(F(A)))_C \ar[rr]^{(I_D(F(g)))_C} & & (I_D(F(A'')))_C } \to 0 $$
is exact in $\B$. In particular, if $C=D$ we have that the sequence
$$ \xymatrix{0 \ar[r] & F(A') \ar[r]^{F(f)} &F(A) \ar[r]^{F(g)} & F(A'') \ar[r] & 0} $$
is exact in $\B$. Hence $F$ is an exact functor. 
\end{proof}

\begin{prop}\label{prop equiv funtor induzido}
Let $\A$ and $\B$ be categories and let $F:\A \longrightarrow \B$ be an equivalence. Then $F_{\C}: \C(\A)\longrightarrow \C(\B)$ is an equivalence.
\end{prop}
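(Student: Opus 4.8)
The plan is to produce an explicit quasi-inverse for $F_{\C}$ out of a quasi-inverse for $F$. Since $F$ is an equivalence, I may choose a functor $G:\B\longrightarrow\A$ together with natural isomorphisms $\eta:Id_{\A}\xrightarrow{\sim}G\circ F$ and $\varepsilon:F\circ G\xrightarrow{\sim}Id_{\B}$. My claim is that the induced functor $G_{\C}:\C(\B)\longrightarrow\C(\A)$ is a quasi-inverse of $F_{\C}$, and the whole argument reduces to transporting $\eta$ and $\varepsilon$ through the induced-functor construction.

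First I would record two purely formal properties of $(-)_{\C}$ that are immediate from the definition: it is compatible with composition and preserves identities, i.e. $(F'\circ F)_{\C}=F'_{\C}\circ F_{\C}$ for composable functors and $(Id_{\A})_{\C}=Id_{\C(\A)}$. Both are checked by evaluating on an object $H\in\C(\A)$: one has $(F'\circ F)_{\C}(H)=(F'\circ F)\circ H=F'\circ(F\circ H)=F'_{\C}(F_{\C}(H))$, and $(Id_{\A})_{\C}(H)=Id_{\A}\circ H=H$, with the corresponding statements on morphisms following in the same way.

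The substantive step is the following lemma: a natural transformation $\theta:F\Rightarrow F'$ between functors $\A\to\B$ induces a natural transformation $\theta_{\C}:F_{\C}\Rightarrow F'_{\C}$, and if $\theta$ is a natural isomorphism then so is $\theta_{\C}$. Here the component of $\theta_{\C}$ at $H\in\C(\A)$ is the family $\{\theta_{H(C)}\}_{C\in Ob(\C)}$, viewed as a morphism $F\circ H\longrightarrow F'\circ H$ in $\C(\B)$. Establishing this requires two diagram chases. For fixed $H$, the naturality square of $\theta$ evaluated at the morphism $H(f):H(C)\to H(D)$ shows precisely that $\{\theta_{H(C)}\}_{C}$ is a natural transformation, hence a morphism of $\C(\B)$; and the naturality of $\theta_{\C}$ in the variable $H$ unwinds, objectwise, to the naturality of $\theta$ again. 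When each $\theta_A$ is an isomorphism, the objectwise inverses $\{\theta^{-1}_{H(C)}\}_{C}$ assemble into a two-sided inverse of $(\theta_{\C})_{H}$, so $\theta_{\C}$ is a natural isomorphism. I expect these verifications to be the only real work, and they are routine.

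Combining the two steps finishes the proof: applying the lemma to $\eta$ and to $\varepsilon$ and using the formal identities above yields natural isomorphisms
$$Id_{\C(\A)}=(Id_{\A})_{\C}\xrightarrow{\ \sim\ }(G\circ F)_{\C}=G_{\C}\circ F_{\C}, \qquad F_{\C}\circ G_{\C}=(F\circ G)_{\C}\xrightarrow{\ \sim\ }(Id_{\B})_{\C}=Id_{\C(\B)},$$
which exhibit $G_{\C}$ as a quasi-inverse of $F_{\C}$, so $F_{\C}$ is an equivalence. As an alternative one could instead verify directly that $F_{\C}$ is fully faithful and essentially surjective: full faithfulness transfers from $F$ because $Hom_{\A}(H(C),K(C))\to Hom_{\B}(F(H(C)),F(K(C)))$ is bijective for every $C$ and naturality is preserved by applying (respectively removing) $F$; the delicate point along that route is essential surjectivity, where one must assemble a genuine functor $H:\C\to\A$ from objectwise preimages, using the full faithfulness of $F$ both to define $H$ on morphisms and to check functoriality. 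The quasi-inverse argument above avoids that bookkeeping.
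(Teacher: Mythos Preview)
Your argument is correct, but it follows a different route from the paper. The paper verifies directly that $F_{\C}$ is full, faithful, and essentially surjective: faithfulness and fullness are checked componentwise using the corresponding properties of $F$ (with faithfulness of $F$ also used to confirm that the family $\{\alpha_C\}$ produced by fullness is natural), and essential surjectivity is done exactly by the ``delicate'' construction you sketch at the end---choosing objectwise preimages $R_C$ with isomorphisms $\eta_C:F(R_C)\to S(C)$, then using full faithfulness of $F$ to define $R$ on morphisms via $R_t:=F^{-1}(\eta_D^{-1}\circ S(t)\circ\eta_C)$ and to check functoriality. Your approach instead packages everything into the observation that $(-)_{\C}$ respects composition, identities, and natural transformations (in effect, that it is 2-functorial), so a quasi-inverse of $F$ transports directly to a quasi-inverse of $F_{\C}$. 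This is cleaner and avoids the bookkeeping the paper carries out; the paper's route, on the other hand, keeps everything at the level of objects and morphisms without invoking a chosen quasi-inverse, and its explicit construction of the preimage functor is reused later in the paper (e.g.\ in the essential surjectivity arguments of Theorem~\ref{teo meu final}).
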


\begin{proof}
We first check that $F_{\C}$ is full and faithful; given two objects $R_{1}$ and $R_{2}$ of $\C(\A)$, we must prove that the map 
$$ F_{\C}:Hom_{\C(\A)}(R_{1}, R_{2})\longrightarrow Hom_{\C(\B)}(F_{\C}(R_{1}), F_{\C}(R_{2})) $$
is bijective.

Set $\beta \in Hom_{\C(\B)}(F_{\C}(R_{1}), F_{\C}(R_{2}))$. Then, for each object $C$ of $\C$, we have that
$\beta_{C} \in Hom_{\B}(F(R_{1}(C)), F(R_{2}(C)))$. Since $F$ is full, there is a morphism $\alpha_{C}:R_{1}(C)\to R_{2}(C)$
such that $F(\alpha_{C})=\beta_{C}$.

Taking $\alpha = \{ \alpha_{C}; C \in Ob(\C)\}$ we will prove that $\alpha$ is a natural transformation between $R_{1}$
and $R_{2}$, that is, given a morphism $t:C \longrightarrow D$ in $\C$, 
\begin{equation}\label{eqr1}
R_{2}(t) \circ \al_{C} =  \al_{D} \circ R_1(t) .
\end{equation}

Since $\beta_{C}=F(\alpha_{C})$, and using that $\beta$ is a natural transformation, for each $t:C \longrightarrow D$ in $\C$, we have that the diagram
\begin{equation}\nonumber
\xymatrix{F(R_{1}(C)) \ar[r]^-{F(\alpha_{C})} \ar[d]_{F(R_{1}(t))} & F(R_{2}(C))\ar[d]^{F(R_{2}(t))}\\
F(R_{1}(D)) \ar[r]_{F(\alpha_{D})} & F(R_{2}(D))}
\end{equation}
is commutative, that is,
$$F(R_{2}(t) \circ \alpha_{C})= F(\alpha_{D}\circ R_{1}(t)).$$
Since $F$ is faithful, (\ref{eqr1}) is true. This shows that $F_{\C}$ is also full.

Given $\alpha^{(1)}$ and $\alpha^{(2)}$ in $Hom_{\C(\A)}(R_{1}, R_{2})$ such that $F_{\C}(\alpha^{(1)})=F_{\C}(\alpha^{(2)})$. For each object $C$ of $\C$, we have $(F_{\C}(\alpha^{(1)}))_{C}=(F_{\C}(\alpha^{(2)}))_{C}$, that is, $F(\alpha^{(1)}_{C})=F(\alpha^{(2)}_{C})$. Since $F$ is faithful  $\alpha^{(1)}_{C}=\alpha^{(2)}_{C}$ for each $C$, then $\alpha^{(1)}=\alpha^{(2)}$. Hence $F_{\C}$ is faithful.

Next, we show that the induced functor is essentially surjective. Let $S$ be an  object of $\C(\B)$. We must show that there is $R$ in $Ob(\C(\A))$ such that $F_{\C}(R)\simeq S$, that is, there exist a natural isomorphism between $F \circ R$ and $S$.

Since $F$ is an equivalence, for each $C$ in $Ob(\C)$, exists an object $R_{C}$ of $\A$ such that $F(R_{C})\simeq S(C)$ in $\B$. Then there exists at least one isomorphism $\e_{C} \in Hom_{\B}(F(R_{C}),S(C))$. Set $\e_{C}$ for each $C$ in $Ob(\C)$; hence, for each morphism $t: C \longrightarrow D$ in $\C$, we have the following isomorphism:

$$\begin{array}{ rcc}
    Hom_{\B}(S(C),S(D))  & \simeq & Hom_{\B}(F(R_{C}),F(R_{D})) \\
       S(t) & \mapsto & \e_{D}^{-1}\circ S(t) \circ \e_{C}.
\end{array}$$
Then, because $F$ is full and faithful, there exist an unique morphism $R_{t}$ in $Hom_{\B}(R_{C},R_{D})$ such that $F(R_{t})=\e_{D}^{-1}\circ S(t) \circ \e_{C}$. 

Therefore $R$ is a functor from $\C$ to $\A$, and the following diagram is commutative:
\begin{equation}\nonumber
\xymatrix{F(R(C)) \ar[r]^-{\e_{C}} \ar[d]_{F(R(t))} & S(C)\ar[d]^{S(t)}\\
F(R(D)) \ar[r]_{\e_{D}} & S(D)),}
\end{equation}
proving that $\e$ is a natural isomorphism between $F \circ R$ and $S$.
 \end{proof}
 
With mild additional hypotheses, the converse is also true.

\begin{prop}\label{equivalencia}
Let $\A$ and $\B$ be additive categories and let $F: \A \lra \B$ be an additive functor. Then $F$ is an equivalence if, and only if, $F_{\C}$ is also an equivalence.
\end{prop}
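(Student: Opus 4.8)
The plan is to fix an object $D \in Ob(\C)$ once and for all and to exploit the fully faithful embeddings $I_D : \A \to \C(\A)$ and $I_D : \B \to \C(\B)$ of Proposition \ref{ID pleno e fiel} in order to transport the defining properties of $F_{\C}$ back to $F$. One implication is already available: Proposition \ref{prop equiv funtor induzido} shows that if $F$ is an equivalence then so is $F_{\C}$ (that direction does not even require additivity). So I would only need to prove the converse, assuming $F_{\C}$ to be an equivalence and deducing that $F$ is full, faithful, and essentially surjective.

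The structural fact I would invoke is the intertwining relation $F_{\C} \circ I_D = I_D \circ F$ of functors $\A \to \C(\B)$, already recorded inside the proof of Proposition \ref{prop funtor exato} through the identities $F_{\C}(I_D(A)) = F \circ I_D(A) = I_D(F(A))$ and $F_{\C}(I_D(f)) = I_D(F(f))$. This is precisely where additivity of $F$ enters: these identities require $F(0)=0$ and that $F$ carries zero morphisms to zero morphisms, both of which hold because $F$ is additive. With this relation secured, faithfulness and fullness follow formally. For faithfulness, if $F(f)=F(g)$ for $f,g \in Hom_{\A}(A_1,A_2)$, then $F_{\C}(I_D(f)) = I_D(F(f)) = I_D(F(g)) = F_{\C}(I_D(g))$, so faithfulness of $F_{\C}$ gives $I_D(f)=I_D(g)$ and faithfulness of $I_D$ gives $f=g$. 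For fullness, given $h \in Hom_{\B}(F(A_1),F(A_2))$ I would consider $I_D(h): F_{\C}(I_D(A_1)) \to F_{\C}(I_D(A_2))$; since $F_{\C}$ is full there is a natural transformation $\eta$ with $F_{\C}(\eta)=I_D(h)$, and since $I_D$ is full and faithful one has $\eta = I_D(f)$ for a unique $f \in Hom_{\A}(A_1,A_2)$. Then $I_D(F(f)) = F_{\C}(I_D(f)) = I_D(h)$, and faithfulness of $I_D$ yields $F(f)=h$.

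It remains to establish essential surjectivity, which I expect to be the only step carrying genuine content. Given an object $B$ of $\B$, I would apply essential surjectivity of $F_{\C}$ to the object $I_D(B) \in Ob(\C(\B))$, obtaining $R \in Ob(\C(\A))$ together with a natural isomorphism $\theta : F_{\C}(R) = F \circ R \to I_D(B)$ in $\C(\B)$. Evaluating $\theta$ at the distinguished object $D$ produces an isomorphism $\theta_D : F(R(D)) \to I_D(B)(D) = B$ in $\B$. Setting $A := R(D)$ then gives $F(A) \simeq B$, as required, completing the verification that $F$ is an equivalence.

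The main obstacle is conceptual rather than computational: the point is to recognize that all of $F$ can be reconstructed from $F_{\C}$ through the single ``slot'' $D$, using that $I_D$ is a full embedding whose image is intertwined with $F$ and $F_{\C}$. Once the relation $F_{\C} \circ I_D = I_D \circ F$ is in place — and in particular once it is checked that additivity of $F$ is exactly what forces $F(0)=0$, so that $F \circ I_D$ is again of the form $I_D(\,\cdot\,)$ — the three properties defining an equivalence fall out with no further effort, and the extraction of $A=R(D)$ from the natural isomorphism $\theta$ is the one slightly delicate point to state carefully.
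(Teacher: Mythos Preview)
Your proposal is correct and follows essentially the same route as the paper: both proofs invoke Proposition \ref{prop equiv funtor induzido} for the forward direction and, for the converse, fix $D\in Ob(\C)$, use the intertwining $F_{\C}\circ I_D = I_D\circ F$ (which relies on additivity of $F$) together with full faithfulness of $I_D$ to obtain full faithfulness of $F$, and extract $A=R(D)$ from $F_{\C}(R)\simeq I_D(B)$ to get essential surjectivity. The only cosmetic difference is that you verify fullness and faithfulness separately while the paper packages them as a single chain of Hom-set bijections.
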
 

\begin{proof} The first implication follows of the previous Proposition. To establish the converse statement, let $F_{\C}: \C(\A) \lra \C(\B)$ be an equivalence of categories.

Given $B \in Ob(\B)$ and $D \in Ob(\C)$, consider the functor $I_D(B): \C \lra \B$ defined above. Since $F_{\C}$ is an equivalence, there exists
$K \in Ob(\C(\A))$ such that $F_{\C}(K)\simeq I_D(B)$, that is, $F \circ K \simeq I_D(B)$. Then there is a natural isomorphism  $\e$ between $F \circ K$and $I_D(B)$. Thus there exists an isomorphism $\e_D \in Hom_{\B}(F \circ K (D),I_D(B)(D))$, hence $F(K(D)) \simeq I_D(B)(D) = B$ and $K(D) \in Ob(\A)$.

Let $A_1$ and $A_2$ objects of $\A$. As $I_D$ is full and faithful, in Proposition \ref{ID pleno e fiel}, we have
$$Hom_{\A}(A_1,A_2) \simeq Hom_{\C(\A)}(I_D(A_1),I_D(A_2)),$$
then, as $F_{\C}$ is an equivalence 
$$Hom_{\C(\A)}(I_D(A_1),I_D(A_2)) \simeq Hom_{\C(\B)}(F_{\C}(I_D(A_1)),F_{\C}(I_D(A_2))).$$
However, $F_{\C}(I_D(A_i))= F \circ I_D(A_i) =I_D(F(A_i))$, hence
$$\begin{array}{ccl}
Hom_{\C(\B)}(F_{\C}(I_D(A_1)),F_{\C}(I_D(A_2)))& \simeq  & Hom_{\C(\B)}(I_D(F(A_1)),I_D(F(A_2)))\\
& \simeq & Hom_{\B}(F(A_1),F(A_2))
\end{array}$$
It follows that
$$Hom_{\A}(A_1,A_2)\simeq Hom_{\B}(F(A_1),F(A_2)),$$
as desired.
\end{proof}

% ------------------- COMPARISON ------------------------------------------

\section{Comparison between $D(\C(\A))$ and $\C(D(\A))$}

\subsection{The isomorphism between $Kom(\C(\A))$ e $\C(Kom(\A))$}

An object $(F^{\p},d_{F})$ in $Kom(\C(\A))$ is a  complex in the functor category $\C(\A)$, that is, is a complex
$$
\xymatrix{ ... \ar[r] & F^{n-1}\ar[r]^-{d_{F}^{n-1}} & F^{n} \ar[r]^-{d_{F}^{n}} & F^{n+1} \ar[r] & ... }
$$
where $F^{i}:\C \longrightarrow \A$ is a functor, and $d_{F}^{i}$ is a natural transformation between $F^{i}$ and  $F^{i+1}$, for each $i \in \Z$. More precisely, 
$$ d^i_F=\{(d^i_F)_C \in Hom_{\A}(F^i(C),F^{i+1}(C)); C \in Ob(\C)\}, $$ 
where, given a morphism $\xymatrix{C \ar[r]^t & D}$ in $\C$ we have $F^{i+1}(t) \circ (d^i_F)_C= (d^i_F)_D \circ F^i(t)$.

Thus for each $C \in Ob(\C)$, we have an object $(F(C)^{\p}, d_{F(C)})$ in $Kom(\A)$:
$$\xymatrix{
... \ar[r] & F^{n-1}(C)\ar[r]^-{(d_{F}^{n-1})_{C}} & F^{n}(C) \ar[r]^-{(d_{F}^{n})_{C}} & F^{n+1}(C) \ar[r] & ... \ .}$$
 
A morphism $\e^{\p} \in Hom_{Kom(\C(\A))}((F^{\p},d_{F}),(G^{\p},d_{G}))$  is a family of natural transformations $\e^{\p}=\{\e^{i} \in Hom_{\C(\A)}(F^{i},G^{i})/ d^{i}_{G}\circ \e^{i}=\e^{i+1}\circ d^{i}_{F}; i \in \Z\}$. That is, for each morphism $\xymatrix{C \ar[r]^t & D}$ in $\C$ we have the  following commutative cube

\begin{equation}\label{cubo}
\xymatrix{ &  G^{i-1}(C) \ar[rrd]^{(d_G^{i-1})_C} \ar[ddd]^{G^{i-1}(t)}  & & \\
F^{i-1}(C) \ar[ru]^{\e^{i-1}_C} \ar[rrd]_{(d_F^{i-1})_C} \ar[ddd]^{F^{i-1}(t)}   & & & G^i(C)\ar[ddd]^{G^i(t)} \\
 & & F^i(C) \ar[ddd]_{F^i(t)} \ar[ru]^{\e^i_C} &   \\
  &  G^{i-1}(D) \ar[rrd]^{(d^{i-1}_G)_D} &  &  \\
 F^{i-1}(D) \ar[ru]^{\e^{i-1}_D}  \ar[rrd]_{(d^{i-1}_F)_D} & & &   G^i(D)\\
 & &   F^i(D) \ar[ru]^{\e_D ^i} &   }
\end{equation}

With these basic ideas in mind, we have the following Lemma:

\begin{lema}\label{lema kom}
Let $\C$ be a small category and let $\A$ be an abelian category. Then the categories $Kom(\C(\A))$ and $\C(Kom(\A))$ are isomorphic.
\end{lema}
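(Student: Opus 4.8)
The plan is to exhibit a pair of functors that are mutually inverse, thereby establishing an isomorphism (and not merely an equivalence) of categories. The guiding observation is that an object of either category consists of precisely the same raw data, namely a collection $\{F^i(C)\}$ of objects of $\A$ together with morphisms $(d^i_F)_C : F^i(C) \to F^{i+1}(C)$ and $F^i(t): F^i(C) \to F^i(D)$ (for $i \in \Z$, $C \in Ob(\C)$, and $t: C \to D$ in $\C$), subject to identical compatibility constraints; the two categories differ only in the order in which this data is packaged. Thus the isomorphism amounts to a bookkeeping reorganization, and the real content is checking that each defining condition on one side corresponds exactly to a defining condition on the other.

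First I would define a functor $\Psi: Kom(\C(\A)) \to \C(Kom(\A))$. On objects, send a complex $(F^\p, d_F)$ to the functor $\Phi_F: \C \to Kom(\A)$ given on $C \in Ob(\C)$ by the complex $(F^\p(C), (d_F)_C)$ obtained by evaluating at $C$, and on a morphism $t: C \to D$ by the family $\Phi_F(t) = \{F^i(t)\}_{i \in \Z}$. The key point is that $\Phi_F(t)$ is a genuine chain map precisely because each $d^i_F$ is a natural transformation, i.e.\ $F^{i+1}(t) \circ (d^i_F)_C = (d^i_F)_D \circ F^i(t)$; and $\Phi_F$ respects composition and identities in $\C$ because each $F^i$ does. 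On morphisms, send $\e^\p: (F^\p, d_F) \to (G^\p, d_G)$ to the natural transformation whose component at $C$ is the chain map $\{(\e^i)_C\}_{i}: \Phi_F(C) \to \Phi_G(C)$. Here two checks merge into one: that $\{(\e^i)_C\}_i$ is a chain map is exactly the relation $d^i_G \circ \e^i = \e^{i+1} \circ d^i_F$ evaluated at $C$, while naturality of $\Psi(\e^\p)$ in the $\C(Kom(\A))$ sense---commutativity of the square of chain maps attached to each $t: C \to D$---is exactly the naturality of each $\e^i$. Both are encoded simultaneously in the commutative cube (\ref{cubo}).

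Next I would define the candidate inverse $\Theta: \C(Kom(\A)) \to Kom(\C(\A))$ by the symmetric recipe: a functor $\Phi: \C \to Kom(\A)$ is sent to the complex whose $i$-th term is the functor $C \mapsto \Phi(C)^i$, $t \mapsto \Phi(t)^i$ (a functor because $\Phi$ is, and because each degree-$i$ component of a chain map composes degreewise), with differentials the natural transformations assembled from the degreewise differentials of the complexes $\Phi(C)$ (natural by the chain-map property of the $\Phi(t)$); morphisms are handled analogously. Finally I would verify $\Psi \circ \Theta = \mathrm{Id}$ and $\Theta \circ \Psi = \mathrm{Id}$, which is immediate since both composites leave the underlying data $\{F^i(C), (d^i_F)_C, F^i(t)\}$ untouched and merely unpack and repack it.

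The main obstacle, such as it is, is purely organizational: one must track carefully that the two distinct commutativity conditions living on each side---the naturality of the differentials and the chain-map condition on morphisms---are interchanged correctly under the reorganization, with no condition dropped or doubled. The cube (\ref{cubo}) is the right device for this, since its faces simultaneously record the chain-map squares (in the $d$-direction) and the naturality squares (in the $t$-direction); confirming that $\Psi$ and $\Theta$ each transport every face to a face on the other side is what makes the verification that they are mutually inverse functors completely mechanical.
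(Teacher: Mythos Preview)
Your proposal is correct and follows essentially the same approach as the paper: you construct the same pair of mutually inverse functors (the paper calls them $K$ and $K'$), use the commutative cube (\ref{cubo}) to verify that the naturality and chain-map conditions correspond, and conclude that the composites are the identity because they merely reindex the data $(\e^i)_C \leftrightarrow (\e_C)^i$. Your organization is slightly cleaner in that you explicitly articulate the ``data plus identical constraints'' viewpoint up front, but the argument is the same.
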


\begin{proof}
We want to find functors $K: Kom(\C(\A)) \lra \C(Kom(\A))$ and $K': \C(Kom(\A)) \lra Kom(\C(\A))$ such that
$K \circ K' = 1_{\C(Kom(\A))}$ e $K' \circ K = 1_{Kom(\C(\A))}$.

Firstly we will define $K$:

Let $(F^{\p},d_{F})$ be an object of $Kom(\C(\A))$, we take $K((F^{\p},d_{F}))$ as the functor $F$ of  $\C$ in $Kom(\A)$ defined by:
 $$\begin{array}{ cccc}
  F:  & \C  & \longrightarrow & Kom(\A) \\
           & \xymatrix{C\ar[d]^{t}\\
      D}& \longmapsto & \xymatrix{(F(C)^{\p}, d_{F(C)})\ar[d]^{F(t)^{\p}} \\
      (F(D)^{\p}, d_{F(D)})}
 \end{array}$$
where $F(t)^{\p}= \{(F(t))^{i}= F^{i}(t); i \in \Z\}$; note that $F(t)^{\p}$ is a morphism in $Kom(\A)$. Indeed, since each $d^{i}_{F}$ is a natural transformation and each $F^{i}$ is a functor, the  diagram 
$$\xymatrix{F^{i}(C) \ar[r]^-{(d^{i}_{F})_{C}}\ar[d]_{F^{i}(t)} & F^{i+1}(C)\ar[d]^{F^{i+1}(t)}\\
F^{i}(D)\ar[r]_-{(d^{i}_{F})_{D}}&   F^{i+1}(D)}$$
is commutative for all $i \in \Z$ and for all  $\xymatrix{C \ar[r]^t & D}$ morphism in $\C$.

Now, given $\e^{\p} \in Hom_{Kom(\C(\A))}((F^{\p},d_{F}),(G^{\p},d_{G}))$, we have:
$$\begin{array}{ccl}
\e^{\p}& = & \{\e^{i} \in Hom_{\C(\A)}(F^{i},G^{i})/ d^{i}_{G}\circ \e^{i}=\e^{i+1}\circ d^{i}_{F}; i \in \Z \} \\
            &=& \{(\e^{i})_{C} \in Hom_{\A}(F^{i}(C),G^{i}(C)) \ \mbox{tal} \  \mbox{que} \\
            & & (d^{i}_{G}\circ \e^{i})_{C}=(\e^{i+1}\circ d^{i}_{F})_{C}; \  i \in \Z,  \ C \in Ob(\C) \}.
\end{array}$$
We define
$$\begin{array}{ccl}
\e := K(\e^{\p}) & = & \{(\e_{C})^{\p} \in Hom_{Kom(\A)}(F(C)^{\p},G(C)^{\p}); C \in Ob(\C) \} \\
            &=& \{(\e_{C})^{i} \in Hom_{\A}(F(C))^{i},(G(C))^{i}))\mbox{tal} \  \mbox{que} \\
             & & (d^{i}_{G}\circ \e^{i})_{C}=(\e^{i+1}\circ d^{i}_{F})_{C}; i \in \Z, C \in Ob(\C) \}.
\end{array}$$

By the commutative cube (\ref{cubo}), $\e$ is a natural transformation between $F=K((F^{\p},d_{F}))$ and $G=K((G^{\p},d_{G}))$.

In short, $K$ is the following functor:
 $$\begin{array}{ cccc}
  K:  & Kom(\C(\A))  & \longrightarrow &\C(Kom(\A)) \\
           & \xymatrix{(F^{\p},d_{F})\ar[d]^{\e^{\p}}\\
      (G^{\p},d_{G})}& \longmapsto & \xymatrix{F\ar[d]^{\e} \\
      G}
 \end{array}$$
Let us define $K'$:
 
Take $F \in Ob(\C(Kom(\A)))$. Then, for each $C \in Ob(\C)$, $F(C)$ is a complex in $\A$ and for each $t \in Hom_{\C}(C,D)$, $F(t)$ is a morphism of complexes:
 $$\xymatrix{F(C)=\ar[d]^{F(t)} & ... \ar[r] & F(C)^{n-1}\ar[r]^-{d_{F(C)}^{n-1}}\ar[d]^{F(t)^{n-1}} & F(C)^{n} \ar[r]^-{d_{F(C)}^{n}}\ar[d]^{F(t)^{n}} & F(C)^{n+1} \ar[r] \ar[d]^{F(t)^{n+1}}& ... \\
 F(D)= &. .. \ar[r] & F(D)^{n-1}\ar[r]^-{d_{F(D)}^{n-1}} & F(D)^{n} \ar[r]^-{d_{F(D)}^{n}} & F(D)^{n+1} \ar[r] & ...}$$
 
Therefore, as $F$ is a functor between $\C$ and $Kom(\A)$, we have that each $F^{i}$ is defined by 
$$\begin{array}{cccl}
F^{i}: & \C & \longrightarrow & \A \\
 & \xymatrix{C\ar[d]^{t}\\ 
D} & \longmapsto & \xymatrix{F^{i}(C)=F(C)^{i} \ar[d]^{F^{i}(t)=F(t)^{i}}\\
F^{i}(D)=F(D)^{i}}
\end{array}$$
Moreover, we can define for each $i \in \Z$, $d_{F}^{i}=\{d_{F(C)}^{i}\in Hom_{\A}(F(C)^{i}, F(C)^{i+1}); C \in Ob(\C)\}$ which, by definition of $F$, is a natural transformation satisfying $d_{F}^{i+1} \circ d_{F}^{i}=0$, for all $i \in \Z$. Hence
 $$\xymatrix{
... \ar[r] & F^{n-1}\ar[r]^-{d_{F}^{n-1}} & F^{n} \ar[r]^-{d_{F}^{n}} & F^{n+1} \ar[r] & ... }$$
is an object of $Kom(\C(\A))$ that will be the image of $F$ by the functor $K'.$

Let $\e$ be a natural transformation between $F$ and $G$, then
$$\begin{array}{ccl}
 \e &=& \{\e_C \in Hom_{Kom(\A)}(F(C)^{\p},G(C)^{\p}); C \in Ob(\C) \} \\
        &=& \{(\e_{C})^{i} \in Hom_{\A}(F(C))^{i},(G(C))^{i}))\mbox{tal} \  \mbox{que} \\
       & & (d^{i}_{G}\circ \e^{i})_{C}=(\e^{i+1}\circ d^{i}_{F})_{C}; i \in \Z, C \in Ob(\C) \}.
\end{array}$$

Namely, $   \e = \{(\e_{C})^{\p} \in Hom_{Kom(\A)}(F(C)^{\p},G(C)^{\p}); C \in Ob(\C) \}$.

We will define
$$\begin{array}{ccl}
\e^{\p}= K'(\e) & = & \{(\e_{C})^{\p} \in Hom_{Kom(\A)}(F(C)^{\p},G(C)^{\p}); C \in Ob(\C) \} \\
            &=& \{(\e_{C})^{i} \in Hom_{\A}(F(C))^{i},(G(C))^{i}))\mbox{tal} \  \mbox{que} \\
             & & (d^{i}_{G}\circ \e^{i})_{C}=(\e^{i+1}\circ d^{i}_{F})_{C}; i \in \Z, C \in Ob(\C) \}.
\end{array}$$
Therefore, by the commutativity of cube  (\ref{cubo}),  $K'(\e)$ is a morphism in $Kom(\C(\A))$ between $(F^{\p},d_F)$ and $(G^{\p},d_G)$. 

In summary, $K'$ is defined as follows: 
$$\begin{array}{ cccc}
  K':  & \C(Kom(\A))  & \longrightarrow &Kom(\C(\A)) \\
           &  \xymatrix{F\ar[d]^{\e} \\
      G} & \longmapsto &  \xymatrix{(F^{\p},d_{F})\ar[d]^{\e^{\p}}\\
      (G^{\p},d_{G}).} \end{array}$$

So, $K$ maps $(\e^{i})_{C}\longmapsto (\e_{C})^{i}$ and $K'$ maps $(\e_{C})^{i}\longmapsto (\e^{i})_{C}$. Hence $K$ and $K'$ restricted to $Hom's$ coincide with the identity (they are only a change of indices). Furthermore, $K \circ K'$ and $K' \circ K$ act trivially on objects, so are the identity functors of $\C(Kom(\A))$ and $Kom(\C(\A))$, respectively.

We conclude that $\C(Kom(\A))$ and $Kom(\C(\A))$ are isomorphic. 
\end{proof}

Similarly, one can show that $Kom^*(\C(\A))$ and  $\C(Kom^*(\A))$ are also  isomorphic, for $*= +, - $ or $b$.

Let $Kom_{0}(\A)$ be the category of complexes whose differentials are all zero. Then we have:

\begin{coro}\label{coro kom}
Under the same hypothesis of the last theorem, $Kom_0(\C(\A))$ and $\C(Kom_0(\A))$ are isomorphic.
\end{coro}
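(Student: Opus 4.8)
The plan is to recycle the mutually inverse isomorphisms $K$ and $K'$ constructed in the proof of Lemma \ref{lema kom}, observing that each of them respects the vanishing of the differentials and hence restricts to the relevant subcategory. Note first that $Kom_0(\C(\A))$ is, by definition, the full subcategory of $Kom(\C(\A))$ whose objects are those complexes $(F^{\p},d_F)$ with $d_F^i=0$ for every $i\in\Z$, while $\C(Kom_0(\A))$ is the full subcategory of $\C(Kom(\A))$ consisting of the functors $\C\lra Kom(\A)$ that land in $Kom_0(\A)$. Because both are full subcategories, no $Hom$-sets are altered by passing to them, so it will suffice to check that $K$ and $K'$ carry one subcategory into the other on objects.

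First I would examine $K$ on an object $(F^{\p},d_F)$ of $Kom_0(\C(\A))$. Recall that $K((F^{\p},d_F))$ is the functor $F\colon\C\lra Kom(\A)$ sending an object $C$ to the complex $(F(C)^{\p},d_{F(C)})$ whose differentials are $d_{F(C)}^i=(d_F^i)_C$. If every $d_F^i$ vanishes as a natural transformation, then $(d_F^i)_C=0$ for every $C\in Ob(\C)$, so each complex $F(C)$ has zero differentials, i.e. $F(C)\in Ob(Kom_0(\A))$. Hence $K((F^{\p},d_F))$ is an object of $\C(Kom_0(\A))$, and $K$ restricts to a functor $Kom_0(\C(\A))\lra\C(Kom_0(\A))$.

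Next I would run the symmetric argument for $K'$. Given a functor $F\in Ob(\C(Kom_0(\A)))$, the complex $K'(F)=(F^{\p},d_F)$ has differentials $d_F^i=\{d_{F(C)}^i;\ C\in Ob(\C)\}$; if each $F(C)$ lies in $Kom_0(\A)$ then $d_{F(C)}^i=0$ for all $C$, whence $d_F^i=0$ and $K'(F)\in Ob(Kom_0(\C(\A)))$. On morphisms nothing new happens: both $K$ and $K'$ act on $Hom$-sets merely by the reindexing $(\e^i)_C\leftrightarrow(\e_C)^i$ recorded in Lemma \ref{lema kom}, which is insensitive to the differentials. Consequently the identities $K'\circ K=1$ and $K\circ K'=1$ established there descend verbatim to the restricted functors, yielding the desired isomorphism $Kom_0(\C(\A))\cong\C(Kom_0(\A))$.

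The argument presents no real obstacle: the single point to verify is the equivalence ``$d_F^i=0$ as a natural transformation in $\C(\A)$ $\iff$ $(d_F^i)_C=0$ for every $C\in Ob(\C)$,'' which is immediate from the componentwise definition of the zero morphism in the functor category (part \textbf{(i)} of Proposition \ref{abeli}). Once this componentwise characterization is spelled out and the fullness of the two subcategories is noted, the corollary follows formally from Lemma \ref{lema kom} with no further computation.
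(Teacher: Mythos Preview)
Your proposal is correct and is precisely the intended argument: the paper states the corollary without proof immediately after Lemma \ref{lema kom}, so the implicit proof is exactly the restriction of $K$ and $K'$ to the subcategories of complexes with zero differentials that you spell out. The only content is the componentwise equivalence $d_F^i=0 \iff (d_F^i)_C=0$ for all $C$, which you correctly identify and justify.
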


\subsection{The category $D(\C(\A))$}\label{DCA}

The objects of $D(\C(\A))$ are the same objects of $Kom(\C(\A))$. Using the isomorphism $K$, we can think of the objects of $D(\C(\A))$ as objects of $\C(Kom(\A))$, thus for each $F^{\p} \in Ob(D(\C(\A)))$ we associate a functor $F: \C \lra Kom(\A)$.

A morphism $F^{\p} \longrightarrow G^{\p}$ in $D(\C(\A))$  is a class of diagrams of the form:

\begin{equation}\label{morfismo dca}
\xymatrix{ & H^{\p} \ar[ld]_{[\alpha]} \ar[rd]^{[f]} &  \\  F^{\p}  &  &G^{\p},}
\end{equation}
  
where $[f]$ is a morphism in $K(\C(\A))$ and $[\alpha]$ is a quasi-isomorphisms in $K(\C(\A))$. 

We note that if $f \sim g$ in $Kom(\C(\A))$ then $f_C \sim g_C$ in $Kom(\A)$, for all $C \in Ob(\C)$. Recall also that $f_C$ comes from the isomorphism between $Kom(\C(\A))$ and $\C(Kom(\A))$, as explained above.

Moreover, we will prove that, if $\alpha$ is a quasi-isomorphism in $Kom(\C(\A))$ then $\alpha_C$ is a quasi-isomorphism in $Kom(\A)$. Therefore, given the diagram (\ref{morfismo dca}) we have, for each object $C \in Ob(\C)$, the following diagram in $D(\A)$:

\begin{equation}\nonumber
\xymatrix{ & H(C) \ar[ld]_{[\alpha_C]} \ar[rd]^{[f_C]} &  \\  F(C)  &  &G(C),}
\end{equation}

where $H(C),F(C)$ and $G(C)$ are the objects of $Kom(\A)$ induced by the isomorphism $K$ described in the proof of Lemma \ref{lema kom}. For each morphism $\xymatrix{C \ar[r]^t & D}$ in $\C$ we have the following diagram  in $K(\A)$, where each square is commutative: 
\begin{equation}\label{dia cat dca}
\xymatrix{ & H(C) \ar[ld]_{\alpha_C} \ar[rr]^{H(t)} \ar[ldd]^{f_C} & & H(D) \ar[ld]_{\alpha_D} \ar[ldd]^{f_D} \\
F(C) \ar[rr]_{F(t)} & & F(D) & \\
 G(C) \ar[rr]_{G(t)} &  & G(D). &}
\end{equation}
 
Let us now prove that a quasi-isomorphism $\alpha$ in $Kom(\C(\A))$ induces a quasi-isomorphism $\alpha_C$ in $Kom(\A)$. For this, we require the following Lemma, whose proof can be found in \cite[Corollary 2.11.9, page 97]{Vargas}.

\begin{lema}\label{sei la}
Let $\psi: S \lra T$ be a morphism in $\C(\A)$ with kernel $(\theta,K)$. Then $(\theta_C,K(C))$ is the kernel of $\psi_C:S(C)\lra T(C)$.
\end{lema}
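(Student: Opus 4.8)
Let $\psi: S \lra T$ be a morphism in $\C(\A)$ with kernel $(\theta,K)$. Then $(\theta_C,K(C))$ is the kernel of $\psi_C:S(C)\lra T(C)$.

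The plan is to reduce the statement to the explicit pointwise construction of kernels carried out in the proof of Proposition \ref{abeli}. Recall that in part \textbf{AB 1} of that proof, a kernel of $\psi$ was built objectwise: for each $C \in Ob(\C)$ one takes the kernel of $\psi_C$ in the abelian category $\A$, these assemble into a functor $\tilde K \in Ob(\C(\A))$, and the resulting family $\tilde\theta = \{\tilde\theta_C\}$ is a natural transformation making $(\tilde\theta,\tilde K)$ a kernel of $\psi$ in $\C(\A)$. By the very construction, $(\tilde\theta_C,\tilde K(C))$ is the kernel of $\psi_C$ for every $C$. It therefore suffices to transport this fact along the canonical comparison with the given kernel $(\theta,K)$.

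First I would invoke the uniqueness of kernels in the abelian category $\C(\A)$: since both $(\theta,K)$ and $(\tilde\theta,\tilde K)$ are kernels of the same morphism $\psi$, there is a unique isomorphism $\fii : K \lra \tilde K$ in $\C(\A)$ with $\tilde\theta \circ \fii = \theta$. Next I would use the standard fact that a natural transformation is an isomorphism in the functor category $\C(\A)$ precisely when it is a pointwise isomorphism; hence for each $C \in Ob(\C)$ the component $\fii_C : K(C) \lra \tilde K(C)$ is an isomorphism in $\A$, and evaluating the relation $\tilde\theta \circ \fii = \theta$ at $C$ yields $\tilde\theta_C \circ \fii_C = \theta_C$. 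Because $(\tilde\theta_C,\tilde K(C))$ is a kernel of $\psi_C$ and $\fii_C$ is an isomorphism intertwining the two kernel morphisms, the pair $(\theta_C,K(C))$ is again a kernel of $\psi_C$, which is the desired conclusion.

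The only genuinely delicate point — and the step I would treat most carefully — is that the given kernel $(\theta,K)$ need not literally be the pointwise one produced in \textbf{AB 1}, so the conclusion cannot simply be read off from a construction. What makes the argument go through is that evaluation at $C$ carries the canonical isomorphism of kernels in $\C(\A)$ to an isomorphism in $\A$ that respects the kernel morphisms; equivalently, one may phrase the whole argument as the assertion that each evaluation functor $\mathrm{ev}_C : \C(\A) \lra \A$ preserves kernels. A direct verification of the universal property of $(\theta_C,K(C))$ is tempting but awkward, since lifting a test morphism $g : A \lra S(C)$ with $\psi_C \circ g = 0$ to a morphism in $\C(\A)$ is obstructed by naturality (the functors $I_C(A)$ do not in general admit such a map to $S$, as this would force $S(t)\circ g = 0$ for every $t$ out of $C$); routing the proof through the uniqueness of kernels avoids this obstruction entirely.
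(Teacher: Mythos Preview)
Your argument is correct. The paper does not actually supply a proof of this lemma; it merely cites \cite[Corollary 2.11.9, page 97]{Vargas}. So there is no in-paper proof to compare against, and your route via the explicit pointwise kernel built in part \textbf{AB 1} of Proposition \ref{abeli}, followed by the uniqueness of kernels in $\C(\A)$ and the fact that natural isomorphisms are componentwise isomorphisms, is a clean and self-contained replacement for that external reference. Your closing remark is also on point: a direct check of the universal property of $(\theta_C,K(C))$ by lifting test arrows through $I_C$ does not work in general, and bypassing it via uniqueness is exactly the right move.
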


\begin{prop}\label{quase iso}
The morphism $\al$ is a quasi-isomorphism in $Kom(\C(\A))$ if, and only if, $\al_C$ is a  quasi-isomorphism in $Kom(\A)$.
\end{prop}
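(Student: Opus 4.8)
The plan is to reduce the statement to the fact that cohomology in the functor category is computed pointwise, which in turn rests on the observation that evaluation at an object of $\C$ is an exact functor. First I would introduce, for each $C \in Ob(\C)$, the \emph{evaluation functor} $E_C : \C(\A) \to \A$ sending $F \mapsto F(C)$ and a natural transformation $\eta \mapsto \eta_C$. Applied degreewise it induces a functor $E_C : Kom(\C(\A)) \to Kom(\A)$ which, under the isomorphism $K$ of Lemma \ref{lema kom}, is exactly the assignment $\alpha \mapsto \alpha_C$ appearing in the statement. The first step is to show that $E_C$ is exact: Lemma \ref{sei la} says precisely that $E_C$ preserves kernels, and the dual argument (the cokernel analogue of the pointwise kernel computation carried out in the proof of Proposition \ref{abeli}) shows that $E_C$ preserves cokernels as well. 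Hence $E_C$ is exact.

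Next, since the cohomology $H^n$ of a complex is built out of the kernels, images, and cokernels of its differentials, and an exact functor commutes with all of these, I would conclude that there is a natural isomorphism $E_C(H^n(F^\bullet)) \cong H^n(E_C(F^\bullet)) = H^n(F(C)^\bullet)$, compatible with morphisms of complexes. In particular, writing $H^n(\alpha) : H^n(F^\bullet) \to H^n(G^\bullet)$ for the morphism in $\C(\A)$ induced by $\alpha$, this compatibility gives $(H^n(\alpha))_C = H^n(\alpha_C)$ for every $C \in Ob(\C)$ and every $n$.

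Finally I would invoke the standard fact that a morphism $\phi$ in $\C(\A)$ is an isomorphism if and only if each component $\phi_C$ is an isomorphism in $\A$ (the inverse is constructed componentwise, naturality being inherited from $\phi$). By definition $\alpha$ is a quasi-isomorphism in $Kom(\C(\A))$ if and only if $H^n(\alpha)$ is an isomorphism in $\C(\A)$ for all $n$; by the pointwise criterion this holds if and only if $(H^n(\alpha))_C = H^n(\alpha_C)$ is an isomorphism in $\A$ for every $C$ and every $n$, i.e. if and only if $\alpha_C$ is a quasi-isomorphism in $Kom(\A)$ for each $C$. This establishes both implications at once.

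The main obstacle will be the bookkeeping in the second step: one must verify not merely that $E_C(H^n(F^\bullet))$ and $H^n(F(C)^\bullet)$ are abstractly isomorphic, but that the isomorphism is natural enough to identify $E_C(H^n(\alpha))$ with $H^n(\alpha_C)$. This amounts to tracking the universal properties of the kernel and cokernel through $E_C$ by means of Lemma \ref{sei la} and its dual, and is precisely where the exactness of $E_C$ does the real work.
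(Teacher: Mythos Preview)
Your proposal is correct and follows essentially the same approach as the paper: both reduce the statement to the identity $(H^n(\alpha))_C = H^n(\alpha_C)$, obtained by showing that evaluation at $C$ commutes with kernels (Lemma \ref{sei la}) and cokernels, and then use the pointwise criterion for isomorphisms in $\C(\A)$. The only cosmetic difference is that the paper carries out the ``bookkeeping'' you flag at the end via an explicit diagram chase with the universal property of the cokernel, whereas you package it as the general slogan that an exact functor commutes with cohomology; the content is the same.
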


\begin{proof} Given $\al \in Hom_{Kom(\C(\A))}(F^{\p},G^{\p})$, by the isomorphism $K$, we can consider $\al: F \lra G$ as a morphism in $\C(Kom(\A))$. We then have, for each object $C$ in $\C$, a morphism $\al_C:F(C) \lra G(C)$ in $Kom(\A)$.

The key point in this proof consists in showing that $(H^i(\al))_C=H^i(\al_C)$.

Naturally, $\al_C$ is a quasi-isomorphism if, and only if, $H^i(\al_C)$ is an isomorphism. If the above equality is true, then $(H^i(\al))_C$ is also  an isomorphism. But, that is true if, and only if, $H^i(\al)$ is an isomorphism, and if, and only if, $\al$ is a quasi-isomorphism.

Let us thus establish the desired equality. By definition we have that $H^{n+1}(F^{\p})= Coker(a^n)$ and  $H^{n+1}(G^{\p})= Coker(b^n)$, where $a^n$ and $b^n$ are given by the following diagrams:
$$ \xymatrix{F^n \ar[r]^{d^n_F} \ar[rd]_{a^n} & F^{n+1} \ar[r]^{d^{n+1}_F} & ... &  &
G^n \ar[r]^{d^n_G} \ar[rd]_{b^n} & G^{n+1} \ar[r]^{d^{n+1}_G}& ...\\
& Ker(d^{n+1}_F) \ar@{^{(}->}[u]  &  &  &  &Ker(d^{n+1}_G).\ar@{^{(}->}[u] &  & } $$
It then follows from Lemma \ref{sei la} that
$$ (H^{n+1}(F^{\p}))(C)=(Coker(a^n))(C)= Coker(a^n_C)=H^{n+1}(F(C)) $$
and analogously $(H^{n+1}(G^{\p}))(C)=H^{n+1}(G(C))$. Moreover, we have the following commutative diagram

$$\xymatrix{F^n(C) \ar@/^0.8cm/[rrr]^{\al^n_C} \ar[r]^{(d^n_F)_C} \ar[rd]_{a^n_C} & F^{n+1}(C) \ar[r]^{\al^{n+1}_C} & G^{n+1}(C) & G^n(C)\ar[l]_{(d^n_G)_C}\ar[ld]^{b^n_C} \\
 & Ker((d^{n+1}_F)_C) \ar@{^{(}->}[u] \ar[r]^{\al^{n+1}_C}\ar[d]_{p_C} &  Ker((d^{n+1}_G)_C).\ar@{^{(}->}[u] \ar[d]^{q_C} & \\
 &H^{n+1}(F(C))\ar@{-->}[r]^{r_C} & H^{n+1}(G(C)), &
}$$
where $q_{C} \circ \al^{n+1}_C\circ a^n_C= q_C \circ b^n_C \circ \al^n_C$, but $q_C \circ b^n_C=0$ then $q_C \circ \al^{n+1}_C\circ a^n_C=0$ and by the definition of cokernel of $a^n_{C}$, there is unique morphism $r_C$ such that $r_C \circ p_{C}= q_{C} \circ \al^{n+1}_{C}$. However, both $(H^{n+1}(\al))_C$ and $H^{n+1}(\al_C)$ make the  lower square commutative then, by uniqueness,  $(H^{n+1}(\al))_C= H^{n+1}(\al_C)$. 
\end{proof}

\subsection{The category $\C(D(\A))$}

The objects of this category are functors between $\C$ and $D(\A)$. If $F$ is an object of $\C(D(\A))$ and  $\xymatrix{ C \ar[r]^t & D}$ is morphism in $\C$  we have that $F(t)=[F_D(t)/F_C(t)]$ is a class of diagrams:

\begin{equation}\nonumber\xymatrix{ & F_{CD} \ar[ld]_{F_C(t)} \ar[rd]^{F_D(t)} &  \\
  F(C)  &  &F(D).}\end{equation}

If $\xymatrix{F \ar[r]^{\e} & G}$ is a morphism in $\C(D(\A))$, for each $C \in Ob(\C)$, we have that the morphism $\xymatrix{F(C) \ar[r]^{\e_C} & G(C)}$ in $D(\A)$, where $\e_C=[\e_G/\e_F]$ represents a class of diagrams:
\begin{equation}\nonumber\xymatrix{ & H_{\e_C} \ar[ld]_{\e_F} \ar[rd]^{\e_G} &  \\
  F(C)  &  &G(C).}\end{equation}

Since $\e$ is a natural transformation between $F$ and $G$, for any given morphism $\xymatrix{ C \ar[r]^t & D}$ in $\C$ the following diagram is commutative in $D(\A)$:

$$\xymatrix{F(C) \ar[r]^-{\e_C} \ar[d]_{F(t)} & G(C) \ar[d]^{G(t)}\\
F(D) \ar[r]_-{\e_D} & G(D).}$$

\subsection{The functor $T$}

Let $Q: Kom(\A) \lra D(\A)$ be the localization functor, that is, the functor which identifies the objects of the two categories $Kom(\A)$ and $D(\A)$, and associates, to a given a morphism $f : A^{\p} \lra B^{\p}$ in $Kom(\A)$, the class $[f/Id]$ represented by the roof:
 \begin{equation}\label{QC}
\xymatrix{& A^{\p} \ar[ld]_{Id}\ar[rd]^{f} & \\ A^{\p} &  & B^{\p}}
\end{equation}

We can then define the induced functor $Q_{\C}$:
$$\begin{array}{cccc}
  Q_{\C}: & \C(Kom(\A)) & \lra & \C(D(\A)) \\ 
               & \xymatrix{ F \ar[d]^{\e} \\ G} & \longmapsto & 
   \xymatrix{Q \circ F \ar[d]^{Q_{\C}(\e)} \\ Q \circ G}
\end{array}$$
where $Q_{\C}(\e)=\{Q(\e_{C}) \in Hom_{D(\A)}(QF(C),QG(C)); C \in Ob(\C)\}$. Remember that $QF(C)=F(C)$, $QG(C)=G(C)$, and $Q(\e_{C})=[\e_{C}/Id]$ which can be represented by the following diagram in $D(\A)$.

\begin{equation}\label{QC2}
\xymatrix{& F(C)\ar[ld]_{Id}\ar[rd]^{\e_{C}} & \\ F(C) &  &G(C)}
\end{equation}
      
\begin{prop}
Let $K: Kom(\C(\A))\lra \C(Kom(\A))$ be the equivalence of the Lema \ref{lema kom} and let $Q_{\C}$ be the induced functor defined above. Then the composition $Q_{\C}\circ K$ maps quasi-isomorphisms into isomorphisms. 
\end{prop}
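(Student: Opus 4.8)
The plan is to reduce the statement to a componentwise check in $D(\A)$, where Proposition \ref{quase iso} and the universal property of localization do all the work. First I would unwind what $Q_{\C}\circ K$ does to a quasi-isomorphism. Let $\al \in Hom_{Kom(\C(\A))}((F^{\p},d_F),(G^{\p},d_G))$ be a quasi-isomorphism. Applying $K$ produces, by Lemma \ref{lema kom}, a natural transformation $K(\al):F\lra G$ between functors $F,G:\C\lra Kom(\A)$ whose component at each $C\in Ob(\C)$ is the morphism of complexes $\al_C:F(C)\lra G(C)$ in $Kom(\A)$; this is exactly the ``change of indices'' identification $(\e^i)_C\mapsto(\e_C)^i$ recorded in the proof of Lemma \ref{lema kom}. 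Applying $Q_{\C}$ then gives the natural transformation $Q_{\C}(K(\al)):Q\circ F\lra Q\circ G$ in $\C(D(\A))$ whose component at $C$ is $Q(\al_C)=[\al_C/Id]$, as in diagram (\ref{QC2}).

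Next I would observe that it suffices to prove that each component $Q(\al_C)$ is an isomorphism in $D(\A)$. This rests on the general fact, valid in any functor category and in particular in $\C(D(\A))$, that a natural transformation is a natural isomorphism if and only if each of its components is an isomorphism in the target category. The only point needing verification is that the componentwise inverses $\{Q(\al_C)^{-1}\}_{C\in Ob(\C)}$ assemble into a natural transformation: given $t:C\lra D$ in $\C$, the naturality square for $Q_{\C}(K(\al))$ reads $(Q\circ G)(t)\circ Q(\al_C)=Q(\al_D)\circ (Q\circ F)(t)$ in $D(\A)$, and conjugating this identity by $Q(\al_D)^{-1}$ on the left and $Q(\al_C)^{-1}$ on the right yields the naturality square for the inverse. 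Hence the inverse is a genuine morphism in $\C(D(\A))$.

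It remains to establish the componentwise claim. By Proposition \ref{quase iso}, since $\al$ is a quasi-isomorphism in $Kom(\C(\A))$, each $\al_C$ is a quasi-isomorphism in $Kom(\A)$. By the defining universal property of the localization functor $Q:Kom(\A)\lra D(\A)$, quasi-isomorphisms are sent to isomorphisms, so $Q(\al_C)=[\al_C/Id]$ is an isomorphism in $D(\A)$ for every $C\in Ob(\C)$. Combined with the previous paragraph, this shows that $Q_{\C}(K(\al))$ is an isomorphism in $\C(D(\A))$, which is precisely the assertion.

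The substance of the argument is entirely in the first and second steps, not in any deep homological input: the first step is the bookkeeping that identifies the component at $C$ of $Q_{\C}(K(\al))$ with $Q(\al_C)$, so that Proposition \ref{quase iso} and the localization property apply directly; the second is the verification that inverting components respects the roof-composition in $D(\A)$ and so produces an honest inverse in $\C(D(\A))$. I expect the mild subtlety to lie in keeping the two layers of ``fractions'' straight---the roofs defining morphisms in $D(\A)$ versus the naturality squares that live in $\C(D(\A))$---but no new obstruction arises, since the characterization of natural isomorphisms holds over an arbitrary target category.
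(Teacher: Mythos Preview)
Your proof is correct and follows essentially the same approach as the paper: reduce to a componentwise check, invoke Proposition \ref{quase iso} to see that each $\al_C$ is a quasi-isomorphism, and conclude that each $Q(\al_C)$ is an isomorphism in $D(\A)$. The only difference is that you are more explicit than the paper about why componentwise isomorphisms assemble into a natural isomorphism, which the paper leaves implicit.
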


\begin{proof}
Take a quasi-isomorphism $\e^{\p} \in Hom_{Kom(\C(\A))}((F^{\p},d_{F}),(G^{\p},d_{G}))$, that is $H^{i}(\e^{\p}): H^{i}(F^{\p})\lra H^{i}(G^{\p})$ is an isomorphism in $\C(\A)$ for all $i \in \Z$ therefore $(H^{i}(\e^{\p}))_{C}: (H^{i}(F^{\p}))(C)\lra (H^{i}(G^{\p}))(C)$ is an isomorphism in $\A$ for each $C \in Ob(\C)$ and for all $i \in \Z$.

We need to prove that $Q_{\C}( \e)$ is an isomorphism in $\C(D(\A))$, that is, for each $C \in Ob(\C)$, the morphism $\e_{C}$ in diagram (\ref{QC2}) is a quasi-isomorphism in $Kom(\A)$. But, by Proposition \ref{quase iso}, $H^{i}(\e_{C})=(H^{i}(\e^{\p}))_{C}$ them $\e_{C}$ is a quasi-isomorphims. \end{proof}

Let $\bq : Kom(\C(\A)) \lra D(\C(\A))$ be the localization for the category $\C(\A)$. Since $Q_{\C}\circ K: Kom(\C(\A)) \lra \C(D(\A))$ maps quasi-isomorphims into isomorphisms, it follows, by definition of derived category, that there exists an unique functor $T: D(\C(\A))\lra \C(D(\A))$ such that $T \circ \bq = Q_{\C} \circ K$, that is, the following diagram is commutative.

\begin{equation}\label{diagrama t}
\xymatrix{Kom(\C(\A))\ar[rr]^{\bq}\ar[d]_{K} &  &D(\C(\A))\ar@{-->}[d]^{T}\\
\C(Kom(\A))\ar[rr]^{Q_{\C}}& & \C(D(\A))}
\end{equation}

The functor $T$ can be described as follows:
\begin{equation}\label{funtor t}\begin{array}{cccc}
T: & D(\C(\A)) & \lra  & \C(D(\A))\\
 & & & \\
& \xymatrix{ & H^{\p} \ar[ld]_{\alpha} \ar[rd]^{f} & \\ F^{\p} & & G^{\p}} & \longmapsto & \xymatrix{T(F^{\p}) = Q_{\C}(F) \ar[d]^{T([f/ \alpha])} \\T(G^{\p})= Q_{\C}(G),}
\end{array}\end{equation}
where $F$ and $G$ are, respectively, the image of $F^{\p}$ e $G^{\p}$ by the functor $K$ and $T([f/ \alpha])=\{(T([f/ \alpha]))_C := [f_C/ \alpha_C]; C \in Ob(\C)\}$ is a natural transformation between $Q_{\C}(F)$ and $Q_{\C}(G)$.

It is not difficult to see that (\ref{funtor t}) fits into the commutative diagram (\ref{diagrama t}); we must now argue that it does define a functor from $D(\C(\A))$ to $\C(D(\A))$.

%We need to show that $T$ is a functor of $D(\C(\A))$ in $\C(D(\A))$.
 First we see that $T$ establishes indeed a relationship between $D(\C(\A))$ and $\C(D(\A))$.
 
 Let $F^{\p} \in Ob(D(\C(\A)))$ then $T(F^{\p})=Q_{\C}(F)$, where $F=K(F^{\p}) \in Ob(\C(Kom(\A))$ and therefore $T(F^{\p})$ is an object of $\C(D(\A))$. Let's see that $T([f/ \alpha]) \in Mor(\C(D(\A)))$, namely, let's see that $T([f/ \alpha])$ is a natural transformation. For this, we need to check that given a morphism $\xymatrix{C \ar[r]^t & D}$ in $\C$ the diagram
$$\xymatrix{F(C) \ar[r]^-{[f_C/\alpha_C]} \ar[d]_{[F(t)/Id]} & G(C) \ar[d]^{[G(t)/Id]}\\
F(D) \ar[r]_-{[f_D/ \alpha_D]} & G(D)}$$
is commutative, that is, $[G(t)/Id] \circ [f_C/ \alpha_C]= [f_D/ \alpha_D] \circ [F(t)/Id]$.

Each side of equality is represented by below diagrams
$$\xymatrix{& &  H(C)\ar[ld]_{Id}\ar[rd]^{f_C} & &  \\
&   H(C)\ar[ld]_{\alpha_c}\ar[rd]^{f_C} & & G(C)\ar[ld]_{Id}\ar[rd]^{G(t)}   &   \\
F(C) & & G(C) & & G(D)  }$$

then $[G(t)/Id] \circ [f_C/ \alpha_C]= [G(f) \circ f_C/ \alpha _C]$. On the other hand, we have

$$\xymatrix{& &  H(C)\ar[ld]_{\alpha_C}\ar[rd]^{H(t)} & &  \\
&   F(C)\ar[ld]_{Id}\ar[rd]^{F(t)} & [1] & H(D)\ar[ld]_{\alpha_D}\ar[rd]^{f_D}   &   \\
F(C) & & F(D) & & G(D)  }$$

then $ [f_D/ \alpha_D] \circ [F(t)/Id]= [f_D \circ H(t) / \alpha _C]$.

The commutativity of the square $[1]$ and the equality $[G(t) \circ f_C/ \alpha _C]= [f_D \circ H(t) / \alpha _C]$ are true because the morphism $\xymatrix{F^{\p} \ar[r]^{[f/ \alpha]} & G^{\p}}$  in $D(\C(\A))$ induce, for each morphism $\xymatrix{C \ar[r]^t & D}$ in $\C$, a diagram in $K(\A)$ like (\ref{dia cat dca}).

To show that $T$ is well defined we need to prove that if $[f_1/ \alpha_1]=[f_2/ \alpha_2]$  then $T([f_1/ \alpha_1])=T([f_2/ \alpha_2])$.

Let $[f_1/ \alpha_1]=[f_2/ \alpha_2]$ then the following classes of diagrams are equivalen:
$$\xymatrix{ & H_1 \ar[ld]_{\alpha_1}\ar[dr]^{f_1} &  & & & H_2 \ar[ld]_{\alpha_2}\ar[dr]^{f_2} &\\
F_1 & & G_1 & & F_2 & & G_2}$$

 Therefore exist quasi-isomorphisms $\xymatrix{H_1^{\p} & R \ar[l]_-{\gamma} \ar[r]^-{\delta} & H_2^{\p}}$ such that the following diagram is commutative:
$$\xymatrix{ & & R^{\p}\ar[ld]_{\gamma} \ar[rd]^{\delta} & &\\
&H_1^{\p} \ar[ld]_{\alpha_1} \ar[rrrd]_{ f_1} & & H_2^{\p} \ar[llld]^{\alpha_2} \ar[rd]^-{ f_2}  &\\
F^{\p} & & & & G^{\p}.}
$$

As $T([f_1/ \alpha_1])=T([f_2/ \alpha_2])$ if, and only if,  $[(f_1)_C/ (\alpha_1)_C]=[(f_2)_C/ (\alpha_2)_C]$ for each $C \in Ob(\C)$, we need to find quasi-isomorphisms $\xymatrix{H_1(C) & R_C \ar[l]_-{\gamma_C} \ar[r]^-{\delta_C} & H_2(C)}$ such that
$$\xymatrix{ & & R_C\ar[ld]_{\gamma_C} \ar[rd]^{\delta_C} & &\\
&H_1(C) \ar[ld]_{(\alpha_1)_C} \ar[rrrd]_{ (f_1)_C} & & H_2(C) \ar[llld]^{(\alpha_2)_C} \ar[rd]^-{ (f_2)_C}  &\\
F(C) & & & & G(C).}
$$
is commutative for each $C \in Ob(\C)$. For this we use the quasi-isomorphisms $\xymatrix{H_1^{\p} & R \ar[l]_-{\gamma} \ar[r]^-{\delta} & H_2^{\p}}$.

In order to prove that (\ref{diagrama t}) is indeed a functor we need the following Lemma, whose proof is in \cite[p. 253]{GM}.

\begin{lema}\label{telhado} Given the diagram 
$$\xymatrix{ & X'' \ar[ld]_t \ar@{-->}[rr]^{u''} \ar[ldd]^-f & & Y'' \ar[ld]_s \ar[ldd]^g \\
X \ar[rr]_u & & Y & \\
 X' \ar[rr]_{u'} &  & Y' &}$$
in $K(\A)$, making a change, if necessary,  the roofs representing $\xymatrix{X & X''\ar[l]_t\ar[r]^f & X'}$ and $\xymatrix{Y & Y''\ar[l]_s\ar[r]^g & Y'}$ em $D(\A)$,  we can find a morfism $\xymatrix{X'' \ar[r]^{u''} & Y''}$such that the two squares are commutative in $K(\A)$, that is, $s \circ u''=u \circ t$ and $g \circ u''= u' \circ f$.   
 \end{lema}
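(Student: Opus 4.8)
The plan is to read the statement as an exercise in the calculus of (right) fractions for the localization $K(\A)\to D(\A)$. Recall that the quasi-isomorphisms form a localizing class (a multiplicative system admitting a calculus of fractions) in $K(\A)$, so every morphism of $D(\A)$ is represented by a roof $\bullet \xleftarrow{\text{q.i.}} \bullet \to \bullet$, and the two Ore-type axioms are available: (i) any pair $Z \xrightarrow{a} Y \xleftarrow{s} Y''$ with $s$ a quasi-isomorphism can be completed to a commutative square by a quasi-isomorphism $W\to Z$ and a morphism $W\to Y''$; and (ii) two parallel morphisms of $K(\A)$ that become equal in $D(\A)$ are already equalized after precomposition with a suitable quasi-isomorphism. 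The compatibility that makes the statement meaningful is that the outer square commutes in $D(\A)$, i.e. $u' \circ (f t^{-1}) = (g s^{-1}) \circ u$ as morphisms $X \to Y'$; this is exactly the hypothesis furnished by diagram (\ref{dia cat dca}) in the situation where the Lemma is applied. I will change only the left-hand roof $X \xleftarrow{t} X'' \xrightarrow{f} X'$, leaving $Y \xleftarrow{s} Y'' \xrightarrow{g} Y'$ fixed.

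First I would apply the Ore condition (i) to the morphism $u \circ t : X'' \to Y$ and the quasi-isomorphism $s : Y'' \to Y$. This produces a quasi-isomorphism $p : W \to X''$ and a morphism $w : W \to Y''$ with $s \circ w = u \circ t \circ p$ in $K(\A)$. Passing to the equivalent roof $X \xleftarrow{t p} W \xrightarrow{f p} X'$ (legitimate, since $p$ is a quasi-isomorphism, so the represented morphism of $D(\A)$ is unchanged) and taking $w$ as a first candidate for $u''$, the \emph{upper} square already commutes on the nose: $s \circ w = u \circ (t p)$.

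It then remains to arrange the \emph{lower} square, and this is where the hypothesis enters. Since $s$ is invertible in $D(\A)$, the relation $s w = u t p$ reads $w = s^{-1} u t p$ there, whence $g w = g s^{-1} u t p = u' f t^{-1} t p = u' f p$, using the commutativity of the outer square. Thus the two genuine $K(\A)$-morphisms $g \circ w$ and $u' \circ f \circ p$ from $W$ to $Y'$ coincide in $D(\A)$. Applying the equalization axiom (ii), I obtain a quasi-isomorphism $q : V \to W$ with $g w q = u' f p q$ in $K(\A)$. Setting $u'' := w \circ q$ and replacing the left roof once more by $X \xleftarrow{t p q} V \xrightarrow{f p q} X'$ (again equivalent, as $p q$ is a quasi-isomorphism), both squares now commute in $K(\A)$: indeed $s \circ u'' = (s w) q = u \circ (t p q)$ and $g \circ u'' = (g w) q = u' \circ (f p q)$, as required.

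The main obstacle I anticipate is precisely the third step: one must first make the tacit compatibility hypothesis explicit, and then invoke the calculus-of-fractions axioms in the correct direction — in particular observing that the lower square, which need not commute in $K(\A)$ after the first refinement, nevertheless commutes in $D(\A)$, so that a single further precomposition with a quasi-isomorphism upgrades this to a genuine equality in $K(\A)$. Beyond that, the only bookkeeping is to keep track that composites of quasi-isomorphisms are again quasi-isomorphisms, so that each successive roof represents the same morphism of $D(\A)$.
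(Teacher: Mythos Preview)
Your proof is correct and is essentially the standard calculus-of-fractions argument. The paper itself does not prove this lemma: it simply cites \cite[p.~253]{GM}, so there is no ``paper's own proof'' to compare against beyond the reference. What you have written is precisely the argument one finds in Gelfand--Manin (or any treatment of localization via multiplicative systems): first complete the upper square using the Ore axiom, then use the cancellation axiom to force the lower square, at the cost of refining the left roof by a further quasi-isomorphism.

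One point worth highlighting: you correctly identify and make explicit the tacit hypothesis that the outer square commutes in $D(\A)$, i.e.\ $u'\circ(ft^{-1}) = (gs^{-1})\circ u$. The paper's statement of the lemma does not say this, but it is of course indispensable (without it there is no reason for $u''$ to exist at all), and it does hold in every application the paper makes of the lemma --- for instance via diagram~(\ref{dia cat dca}). You might consider stating this hypothesis up front rather than introducing it mid-proof as ``the compatibility that makes the statement meaningful''. Also note that while the lemma allows changing \emph{both} roofs, your argument changes only the left one; this is perfectly fine and in fact slightly sharper than what is claimed.
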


Let us finally prove that $T$ satisfies both functor properties. First, let $\xymatrix{F^{\p} \ar[r]^{[f/ \alpha]} & G^{\p}}$ and $\xymatrix{G^{\p}\ar[r]^{[g/ \beta]} & E^{\p}}$ morphisms in $D(\C(\A))$; it is necessary to check that $T([f/ \alpha] \circ [g/ \beta]) = T([f/ \alpha])\circ T( [g/ \beta])$.

By the definition of derived category, there are $\beta '$ and $f'$ such that  
$$T([f/ \alpha] \circ [g/ \beta]) = T([g \circ f' / \alpha \circ \beta']):=\{[g_C \circ f'_C / \alpha_C \circ \beta'_C]; C \in Ob(\C)\}.$$
On the other hand, for each $C \in Ob(\C)$, there are $\beta_C''$ and $f_C''$ such that
$$T([f/ \alpha])\circ T( [g/ \beta])=\{[g_C \circ f_C'' / \alpha_C \circ \beta_C'']; C \in Ob(\C)\}.$$

By Lemma \ref{telhado}, there is $\phi$ for which the squares of the following diagram 
$$\xymatrix{ & W_C \ar[ld]_{\alpha_C \circ \beta_C''} \ar@{-->}[rr]^{\phi} \ar[ldd]^-{g_C \circ f_C''} & & V(C) \ar[ld]_{\alpha_C \circ \beta_C'} \ar[ldd]^-{g_C \circ f_C'} \\
F(C)\ar[rr]^{Id} & & F(C) & \\
E(C)\ar[rr]_{Id} &  & E(C) &}$$ 
are commutative. It is important to note that, in this case, $\phi$ is a quasi-isomorphism. Then we have that
$$\xymatrix{ & & W_C\ar[ld]_{Id} \ar[rd]^{\phi} & &\\
& W_C \ar[ld]_{\alpha_C \circ \beta_C''} \ar[rrrd]_{g_C \circ f_C''} & & V(C) \ar[llld]^{\alpha_C \circ \beta_C'} \ar[rd]^-{g_C \circ f_C'}  &\\
F(C) & & & & E(C)}
$$
is commutative so $[g_C \circ f_C'' / \alpha_C \circ \beta_C'']=[g_C \circ f_C' / \alpha_C \circ \beta_C']$. 
Therefore $T([f/ \alpha] \circ [g/ \beta]) = T([f/ \alpha])\circ T( [g/ \beta])$.

The equality $T(Id_{F^{\p}})=Id_{T(F^{\p})}$ follows directly of definition of $T$. This completes the argument showing that
(\ref{diagrama t}) does define a functor. One can also prove the existence of the functor $T$ for the categories $D^*(\C(\A))$ and $\C(D^*(\A))$, where $*=b,-,+$.

In general, $T$ is not an equivalence of categories. However, under certain conditions we can show that $T$ is full and faithful. In order to set up these conditions, let us recall the notion of quiver. 

\begin{defini}
A quiver $Q=(Q_0,Q_1,t,h)$ is an oriented graph, i.e., it consists of two sets
\begin{itemize}
\item $Q_0$, the set of vertices;
\item $Q_1$, the set of arrows between vertices;
\end{itemize}
plus two maps $t$ and $h$ between them:
$$
\begin{array}{cccl}
t: & Q_1 & \longrightarrow & Q_0 \\
 & a & \longmapsto & t(a)= \mbox{initial vertex}; \\
h: & Q_1 & \longrightarrow & Q_0 \\
 & a & \longmapsto & h(a)= \mbox{end vertex.}
 \end{array}
$$
\end{defini}

A path in the quiver $Q$ is a sequence of arrows $p=a_1 a_2... a_n$ with  $h(a_{i+1})=t(a_i)$ for $1\leqslant i < n$.
We define $t(p)=t(a_n)$ and $h(p)=h(a_1)$. We call the paths $e_i$ trivial paths and define $h(e_{i})=t(e_i)=i$, for all $i \in Q_0$.

For each quiver $Q$ it is possible to associate a category $\Q$, where each vertex $i$ is seen as an object, and each path $p$ is seen as a morphism in 
$Hom_{\Q}(t(p),h(p))$. We say that the category $\Q$ is generated by the quiver $Q$.

\begin{remark} Alternatively, the category $\Q(\A)$ can also be described as \emph{the category of representations of the quiver $Q$ into the category $\A$}; such category is often denoted by $Rep(Q,\A)$. Its objects (called representations) consist of
\begin{itemize}
\item a family  $\{A_i | i \in Q_0\}$, with $A_i \in Ob(\A)$, and
\item a family $\{\phi_a:A_{t(a)} \lra A_{h(a)} | a \in Q_1\}$.
\end{itemize}
Given two representations $(A, \phi)$ and $(B, \varphi)$  a morphism $f:(A, \phi)\lra(B, \varphi)$ in $Rep(Q,\A)$ is a family of morphism $f_i \in Hom_{\A}(A_i,B_i)$, $ i \in Q_0$, such that for each arrow $a\in Q_1$, $\varphi_a \circ f_{t(a)} =f_{h(a)} \circ \phi_a$.

When $\A$ is the category of (finite dimensional) vector spaces over a field, $\Q(\A)$ is precisely the category of (finite dimensional) modules over the path algebra of $Q$.
\end{remark}

We are finally in position to establish the fist key result of this paper.

\begin{teo} \label{t fi ple}
Let $Q$ be a quiver, and let $\Q$ be the category it generates. Then the functor $T: D(\Q(\A)) \lra \Q(D(\A))$ is full and faithful.
\end{teo}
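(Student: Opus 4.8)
The plan is to work throughout with the explicit roof description of both Hom-sets and to exploit the one structural feature that distinguishes the quiver case: since $\Q$ is the free (path) category on $Q$, a functor out of $\Q$ is determined by its values on vertices and arrows, and a family $\{\eta_i\}_{i\in Q_0}$ of morphisms in $D(\A)$ is a morphism in $\Q(D(\A))$ \emph{precisely} when the naturality square holds for every \emph{arrow} $a\in Q_1$ (compatibility with composite paths being automatic). Moreover, in $T(F^{\p})=Q_{\C}(F)$ and $T(G^{\p})=Q_{\C}(G)$ the structure morphism attached to an arrow $a$ is the image $[F(a)/Id]$, resp. $[G(a)/Id]$, of an \emph{honest} chain map $F(a),G(a)$ of $Kom(\A)$. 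I would first record these two reductions, together with Proposition \ref{quase iso}, which guarantees that a morphism $\al$ of $Kom(\Q(\A))$ is a quasi-isomorphism if and only if each $\al_C$ is; this is exactly what allows roofs to be built and tested vertexwise.

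For faithfulness, suppose $[f/\al]$ and $[f'/\al']$ have the same image under $T$, so that $[f_C/\al_C]=[f'_C/\al'_C]$ in $D(\A)$ for every vertex $C$. By the calculus of fractions in $D(\A)$, each equality is witnessed by a common refinement: a complex $R(C)$ together with quasi-isomorphisms onto the two roof apices that equalize the relevant maps. The task is then to assemble the $R(C)$ into a single functor $R\colon\Q\to Kom(\A)$, i.e.\ an object of $\C(Kom(\A))\cong Kom(\Q(\A))$ via Lemma \ref{lema kom}, whose structural chain maps $R(a)$ make the refining quasi-isomorphisms natural. Producing $R(a)$ for each arrow is exactly the content of Lemma \ref{telhado}; the resulting refinement in $Kom(\Q(\A))$ then exhibits $[f/\al]=[f'/\al']$ in $D(\Q(\A))$.

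For fullness, a morphism $\eta=\{\eta_i\}$ of $\Q(D(\A))$ gives, at each vertex, a roof $\eta_i=[g_i/\beta_i]$ with $\beta_i\colon L_i\to F(i)$ a quasi-isomorphism and $g_i\colon L_i\to G(i)$. I would build the apex $H^{\p}\in Kom(\Q(\A))$ by setting $H(i)=L_i$ on vertices and, for each arrow $a\colon i\to j$, invoking Lemma \ref{telhado} with $u=F(a)$, $u'=G(a)$ and the roofs $\eta_i,\eta_j$ to obtain a chain map $H(a)\colon L_i\to L_j$ making both squares commute; the precise hypothesis needed to apply the lemma is the naturality of $\eta$ on the arrow $a$, namely $[G(a)/Id]\circ\eta_i=\eta_j\circ[F(a)/Id]$. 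Then $\al=\{\beta_i\}$ and $f=\{g_i\}$ become natural transformations $H^{\p}\to F^{\p}$ and $H^{\p}\to G^{\p}$, $\al$ is a quasi-isomorphism by Proposition \ref{quase iso}, and by construction $\big(T([f/\al])\big)_i=[g_i/\beta_i]=\eta_i$, so that $T([f/\al])=\eta$.

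The main obstacle in both parts is coherence: Lemma \ref{telhado} fills in one arrow at a time and is allowed to \emph{replace} the roof representatives at its two endpoints, so two arrows sharing a vertex may initially demand incompatible apices there. The point I expect to require the most care is that all competing representatives at a fixed vertex represent one and the same morphism of $D(\A)$, hence admit common refinements; processing the arrows while passing, at each vertex, to a common refinement of the modified roofs yields a single, globally consistent choice of $H(i)$ (resp.\ $R(C)$) together with all of the arrow maps. It is here that the freeness of $\Q$ is essential — only arrow-compatibility must be arranged, so no relations among paths can obstruct the gluing — and it is precisely this gluing that fails for an arbitrary small category $\C$, consistent with $T$ not being full and faithful in general.
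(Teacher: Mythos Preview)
Your proposal follows essentially the same strategy as the paper: both arguments use Proposition \ref{quase iso} to test quasi-isomorphisms vertexwise, invoke Lemma \ref{telhado} arrow-by-arrow to manufacture the structural maps of the apex functor (called $W$ in the paper for faithfulness and $H$ for fullness, versus your $R$ and $H$), and then exploit the freeness of the path category $\Q$ to extend to composites by declaring $W(p\circ q):=W(p)\circ W(q)$. You are in fact more explicit than the paper about the coherence obstacle---that Lemma \ref{telhado} is permitted to modify the roof representatives at its endpoints, so arrows sharing a vertex may a priori conflict---which the paper's written proof passes over without comment.
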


\begin{proof}
We must to show that given $F^{\p}$ and $G^{\p}$ objects in $D(\Q(\A))$, the map $T: Hom_{D(\Q(\A))}(F^{\p},G^{\p}) \lra Hom_{\Q(D(\A))}(T(F^{\p}),T(G^{\p}))$ is a bijective map. Remember that $T(F^{\p})=Q_{\Q}(K(F^{\p}))= Q \circ F$, and that $Q$ denotes the localization functor $Q: Kom(\A) \lra D(\A)$.

Set $[f/ \alpha]$ and $[g/ \beta]$ in $Hom_{D(\Q(\A))}(F^{\p},G^{\p})$ given by:
$$ \xymatrix{ & E^{\p} \ar[dl]_{\alpha} \ar[dr]^{f} & & &  & D^{\p} \ar[dl]_{\beta} \ar[dr]^{g} &   \\
F^{\p} & & G^{\p} & & F^{\p} & & G^{\p}} $$
respectively, and such that $T([f/ \alpha])=T([g/ \beta])$. In other words, 
$$\{[f_i/ \alpha_i]: i \in Ob(\Q)\}=\{[g_i/ \beta_i]: i \in Ob(\Q)\}.$$
Thus, for each $i \in Ob(\Q)$, $[f_i/ \alpha_i]=[g_i/ \beta_i]$, i.e. there are quasi-isomorphisms
$$ \xymatrix{E(i)^{\p} & W(i)^{\p} \ar[l]_{\gamma_i}\ar[r]^{\delta^i} & D(i)^{\p}} $$
such that the following diagram is commutative:
$$ \xymatrix{ & & W(i)^{\p}\ar[ld]_{\gamma_i} \ar[rd]^{\delta_i} & &\\
&E(i)^{\p} \ar[ld]_{\alpha_i} \ar[rrrd]_{ f_i} & & D(i)^{\p} \ar[llld]^{\beta_i} \ar[rd]^-{g_i}  &\\
F(i)^{\p} & & & & G(i)^{\p}.} $$

Let $\xymatrix{i \ar[r]^p & j}$ be a morphism in $\Q$. By Lemma \ref{telhado}, there is $W(p)$ such that the two squares of the diagram 
\begin{equation}\label{fiel}
\xymatrix{ & W(i)^{\p} \ar[ld]_{\gamma_{i}} \ar@{-->}[rr]^{W(p)} \ar[ldd]^-{\delta_i}& & W(j)^{\p} \ar[ld]_{\gamma_{j}} \ar[ldd]^{\delta_{j}} \\
E(i)^{\p} \ar[rr]_{E(p)} & & E(j)^{\p} & \\
D(i)^{\p} \ar[rr]_{D(p)} &  & D(j)^{\p} &}
\end{equation}
are commutative. Hence we can define a functor 
$$ \begin{array}{cccc}
W:&  \Q&  \lra&  K(\A) \\
 & \xymatrix{i \ar[d]^p \\ j} &  \longmapsto & \xymatrix{ W(i)^{\p} \ar[d]^-{W(p)} \\ W(j)^{\p} }
\end{array} $$

In order to have a well-defined functor, we set that if $\xymatrix{i \ar[r]^-p & j}$ and  $\xymatrix{j \ar[r]^-q & k}$ are morphisms in $\Q$, then 
we define $W(p \circ q)=W(p) \circ W(q)$.

From $W$ we can generate a functor $W': \Q \lra Kom(\A)$ choosing a representative of the class $W(p)$ in $Kom(\A)$, which we denote by $W'_p$. Again, we have that $W'$ is well-defined functor. Then:
$$\begin{array}{cccc}
W':&  \Q&  \lra&  Kom(\A) \\
 & \xymatrix{i \ar[d]^p \\ j} &  \longmapsto & \xymatrix{ W'(i)^{\p} \ar[d]^-{W(p)} \\ W'(j)^{\p} }
 \end{array}$$
 where $W'(i)^{\p}=W(i)^{\p}$ for each $i$ object in $\Q$.
 
 Furthermore, by the commutativity of diagram (\ref{fiel}), we have  that
$$ \gamma=\{\gamma_i \in Hom( W'(i)^{\p},E(i)^{\p}); i  \in Ob(\Q)\} ~~~~  \mbox{and} $$
$$ \delta=\{\delta_i \in Hom( W(i)'^{\p},D(i)^{\p}); i  \in Ob(\Q)\} $$
are natural transformations from $W'$ into $E$, and from $W'$ into $D$, respectively. Using the equivalence $K$, we have in $D(\C(\A))$ the following diagram:
$$ \xymatrix{ & & W'^{\p}\ar[ld]_{\gamma} \ar[rd]^{\delta} & &\\
&E^{\p} \ar[ld]_{\alpha} \ar[rrrd]_{ f} & & D^{\p} \ar[llld]^{\beta} \ar[rd]^-{g}  &\\
F^{\p} & & & & G^{\p}} $$
where $\gamma$ and $\delta$ are quasi-isomorphisms, as consequence of $\gamma_i$ and $\delta_i$ are quasi-isomorphisms for each $i \in Ob(\Q)$. 
Therefore $[f/ \alpha]=[g/ \beta]$, and 
$$ T: Hom_{D(\Q(\A))}(F^{\p},G^{\p}) \lra Hom_{\Q(D(\A))}(T(F^{\p}),T(G^{\p})) $$ is an injective map. This completes the proof that $T$ is faithfull.

To see that $T$ is also full, let $f \in Hom_{\Q(D(\A))}(Q \circ F,Q \circ G)$; we must define a morphism $[h/ \phi] \in Hom_{D(\Q(\A))}(F^{\p},G^{\p})$ such that $T([h/ \phi])=f$. Now, $f$ is a natural transformation between $Q \circ F: \Q \lra D(\A)$ and $Q \circ G: \Q \lra D(\A)$. We can define $f=\{[f_i/ \alpha_i]; i \in Ob(\Q)\}$ such that, given $\xymatrix{i\ar[r]^p & j}$, we have the commutative diagram in $D(\A)$:
\begin{equation}\label{sobre}\xymatrix{F(i) \ar[r]^-{[f_i/ \alpha_i]} \ar[d]_{[F(p)/Id]} & G(i) \ar[d]^{[G(p)/Id]}\\
F(j) \ar[r]_-{[f_j/ \alpha_j]} & G(j),}\end{equation}
remembering that $Q\circ F(i)=F(i)$ for all $i \in Ob(\Q)$, and $Q \circ F(p)=[F(p)/Id]$ for all $p \in Mor(\Q)$. Similarly for $Q \circ G$. 

On the other hand, $[h/\phi]$ can be represented by the roof
$$\xymatrix{ & H^{\p} \ar[ld]_{\phi} \ar[rd]^h & \\
F^{\p} &   & G^{\p}}$$
in $D(\Q(\A))$, and for each morphism $\xymatrix{i \ar[r]^p & j}$ in $\Q$,  we have: 
\begin{equation}\xymatrix{ & H(i)^{\p} \ar[ld]_{\phi_{i}} \ar[rr]^{H(p)} \ar[ldd]^-{h_i}& & H(j)^{\p} \ar[ld]_{\alpha_{j}} \ar[ldd]^{h_{j}} \\
F(i)^{\p} \ar[rr]_{F(p)} & & F(j)^{\p} & \\
G(i)^{\p} \ar[rr]_{G(p)} &  & G(j)^{\p} &}\end{equation}

Thus take $h_i=f_i$, $\phi_i=\alpha_i$ and $H(i)=H_i$ for all $i \in Ob(\Q)$. In order to define $H^{\p}$, we still need define who is $H(p)$. However, by Lemma (\ref{telhado}) guarantees the existence of such $H(p)$. And again, we can take $H:\Q \lra Kom(\A)$. So $[h/\phi]$ thus defined satisfies
$T([h/\phi])=f$.
\end{proof}

We conclude this section with an example that shows that $T$ is not in general an equivalence between $D(\Q(\A))$ and $\Q(D(\A))$. Indeed, Let $H$ be the cohomology functor $H: Kom(\A) \lra Kom_0(\A)$, $H((A^n,d^n))= (H^i(A^{\p}),0)$, $H(f:A^{\p}\lra B^{\p})=(H^i(f))$. Since $H$ maps quasi-isomorphisms in isomorphism, it factors through $D(\A)$, that is, we have a functor $R: Kom_0(\A) \lra D(\A)$ such that $Q=R \circ H$, where $Q$ is the localization. 

Before proceeding, let us recall the following fact (cf. \cite[III.4, page 146]{GM}]).

\begin{prop}\label{prop GM}
Let $\A$ be an abelian category. $\A$ is semisimple if, and only if, the functor $R$ is an equivalence of categories.
\end{prop}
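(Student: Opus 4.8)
The plan is to reduce everything to two standard facts of homological algebra. First, for objects $A,B\in Ob(\A)$ one has $Hom_{D(\A)}(A,B[n])\cong \mathrm{Ext}^n_{\A}(A,B)$, where $A,B$ are regarded as complexes concentrated in degree $0$ and $[n]$ denotes the shift; in particular negative $\mathrm{Ext}$ vanishes. Second, $\A$ is semisimple if and only if every short exact sequence in $\A$ splits, equivalently $\mathrm{Ext}^n_{\A}(A,B)=0$ for all $A,B\in Ob(\A)$ and all $n\geqslant 1$. I also record that under $R$ an object $(A^n,0)$ of $Kom_0(\A)$ goes to the complex $(A^n,0)$ viewed in $D(\A)$, the ``sum of its shifted terms'' $\bigoplus_n A^n[-n]$, and that $Hom_{Kom_0(\A)}((A^n),(B^n))=\prod_n Hom_{\A}(A^n,B^n)$, since a chain map between zero-differential complexes is exactly a degreewise family of morphisms. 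With these in hand I would prove the two implications separately.

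Suppose first that $R$ is an equivalence; I want to conclude that $\A$ is semisimple. Since $R$ is in particular fully faithful, fix $A,B\in Ob(\A)$ and let $X$ be the object of $Kom_0(\A)$ with $A$ in degree $0$ and $Y$ the object with $B$ in degree $-1$, so that $R(X)=A$ and $R(Y)=B[1]$ in $D(\A)$. Then
$$\mathrm{Ext}^1_{\A}(A,B)\cong Hom_{D(\A)}(A,B[1])=Hom_{D(\A)}(R(X),R(Y))\cong Hom_{Kom_0(\A)}(X,Y)=0,$$
the last equality because $X$ and $Y$ are supported in distinct degrees while morphisms in $Kom_0(\A)$ preserve degree. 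As $A,B$ were arbitrary, every short exact sequence in $\A$ splits, i.e.\ $\A$ is semisimple.

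Conversely, assume $\A$ is semisimple. For fully faithfulness I would compute, for $X=(A^n)$ and $Y=(B^n)$ in $Kom_0(\A)$, the group $Hom_{D(\A)}(R(X),R(Y))$ termwise: each contribution $Hom_{D(\A)}(A^m[-m],B^n[-n])\cong \mathrm{Ext}^{m-n}_{\A}(A^m,B^n)$ vanishes unless $m=n$ (negative $\mathrm{Ext}$ is always $0$, and positive $\mathrm{Ext}$ vanishes by semisimplicity), while for $m=n$ it equals $Hom_{\A}(A^n,B^n)$; collecting these recovers exactly $\prod_n Hom_{\A}(A^n,B^n)=Hom_{Kom_0(\A)}(X,Y)$, and one checks that $R$ induces this isomorphism. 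For essential surjectivity I would show that every complex $K^{\p}\in Ob(Kom(\A))$ is \emph{formal}, i.e.\ isomorphic in $D(\A)$ to $(H^n(K^{\p}),0)=R(H(K^{\p}))$: using that $\A$ is semisimple, each inclusion $Z^n\hookrightarrow K^n$ of cycles and each inclusion $B^n\hookrightarrow Z^n$ of boundaries splits, which lets one decompose $K^{\p}$ in $Kom(\A)$ as a direct sum of the complexes $H^n(K^{\p})[-n]$ and of contractible complexes of the form $(\cdots\to 0\to C\xrightarrow{\mathrm{id}}C\to 0\to\cdots)$; the latter are acyclic and hence vanish in $D(\A)$, giving $K^{\p}\cong R(H(K^{\p}))$. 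Thus $R$ is essentially surjective and therefore an equivalence.

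The Hom-group matchings are routine once the $\mathrm{Ext}$ identification is granted; the one genuinely substantive step is the formality of an arbitrary complex in the semisimple case. I therefore expect the main obstacle to be carrying out the degreewise splitting coherently, namely choosing the complements $K^n=Z^n\oplus C^n$ and $Z^n=B^n\oplus H^n$ and assembling them into an honest isomorphism of complexes, and checking that the argument survives for unbounded complexes.
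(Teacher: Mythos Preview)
The paper does not actually prove this proposition: it is merely recalled, with a citation to \cite[III.4, p.~146]{GM}, and no argument is given. Your proposal therefore supplies what the paper omits, and the route you take is the standard one (and essentially what one finds in Gelfand--Manin).

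Your argument is correct. The direction ``$R$ an equivalence $\Rightarrow$ $\A$ semisimple'' is clean and complete as written. For the converse, the only place worth tightening is the ``termwise'' computation of $Hom_{D(\A)}(R(X),R(Y))$ for unbounded $X,Y$: decomposing this Hom as a product over all pairs $(m,n)$ tacitly uses that $(A^n,0)$ behaves as both $\bigoplus_n A^n[-n]$ and $\prod_n B^n[-n]$ in $D(\A)$, which, while true here, is not entirely automatic in a general abelian category. A cleaner packaging of the same idea---and one that dispels the worry you yourself flag about unbounded complexes---is to note that your formality argument in fact yields a \emph{homotopy equivalence} $K^{\bullet}\simeq (H^n(K^{\bullet}),0)$, since the contractible summands $(\cdots\to 0\to C\xrightarrow{\mathrm{id}}C\to 0\to\cdots)$ are null-homotopic, not merely acyclic. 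Consequently every quasi-isomorphism in $K(\A)$ is already an isomorphism, so the localization $K(\A)\to D(\A)$ is an equivalence. Then
\[
Hom_{D(\A)}(R(X),R(Y))=Hom_{K(\A)}(X,Y)=\prod_n Hom_{\A}(A^n,B^n)=Hom_{Kom_0(\A)}(X,Y)
\]
directly, because homotopies between complexes with zero differential are trivial. This avoids any appeal to infinite (co)products in $D(\A)$ and makes the unbounded case go through without additional hypotheses.
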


Let $\A$ be a semisimple category. by the Proposition \ref{prop GM}, the funtor $R:  Kom_{0}(\A) \lra D(\A) $  is an equivalence, then, using the  Proposition \ref{prop equiv funtor induzido}, the induced functor $R_{\C}: \C(Kom_0(\A)) \lra \C(D(\A))$ is also an equivalence. It then follows from Corollary \ref{coro kom} that if $\A$ is semisimple, then $\C(D(\A))$ and $Kom_0(\C(\A))$ are equivalent. On the other hand, it is not difficult to see that $\C(\A)$ may not be semisimple, and thus, by Proposition \ref{prop GM} above, $D(\C(\A))$ and $Kom_0(\C(\A))$ are not equivalent therefore $D(\C(\A))$ and $\C(D(\A))$ are also not equivalent.

For example, consider, let $\Q$ be the category induced by the quiver $\bullet \lra \bullet$, i.e. $\Q$ has two objects $Ob(\Q)=\{Q_{1}, Q_{2}\}$ and three morphisms
$Mor(\Q)=\{Id_{Q_{1}}, Id_{Q_{2}}, a:Q_{1}\rightarrow Q_{2}\}$. On the other hand, let $\V$ be the category of finite dimensional vector spaces over a field $\com$. Clearly, $\V$ is semisimple, but $\Q(\V)$ is not (in fact, $\Q(\V)$ is just the category of modules over the path algebra $\com Q$). It follows that $D(\Q(\V))$ can be regarded as a proper full subcategory of $\Q(D(\V))$.

%Observe that, at this point, we could have a problem with the definition of the functor $W$ because the change of $W(p)$ is not unique. Therefore, if $p$, $q$ e $r$ are morphisms in $\Q$ such that $p=q \circ r$, the functor $W$ should satisfy $W(q \circ r)=W(q) \circ W(r)$ so $W(p)=W(q) \circ W(r)$. Then $W(p)$ would be defined by two different forms so $W$ would be ill defined as functor. But this problem does not happen because the quiver $Q$ do not have relations and then there are not morphisms  $p$, $q$ and $r$ in $\Q$ such that $p=q \circ r$. 

%%%%%%%%%%%%%%%%%%%%%%%%%%%%%%%%%%%%%%%%%%%%%%%%%%%%%%%%%%%%%%%%%%%%%%%%%%%%%%%%%%%%%%%%%%%%%%%%%%%%%%%%%%%%%%%%%%%%%%%%%%%%%%%%%%%%%%%%%%%%%%%%

\section{Comparison between $R(F_{\C})$ and $(RF)_{\C}$}

Set $\C$ a small category, $\A$ and $\B$ abelian categories; assume that $\A$ has enough injectives. Let  $F:\A \longrightarrow \B$ be an additive, left exact functor. By Propositions \ref{prop funtor aditivo} and \ref{prop funtor exato}, we know that the induced functor $F_{\C}:\C(\A) \longrightarrow \C(\B)$ is also additive and left exact. Moreover, if $\C(\A)$ has enough injectives, the induced functor $F_{\C}$ admits the extension $R(F_{\C}): D^{+}(\C(\A)) \longrightarrow D^{+}(\C(\B))$, its right derived functor.

On the other hand, starting from the same functor $F:\A \longrightarrow \B$, one may first consider its right derived extension $RF:D^{+}(\A) \longrightarrow D^{+}(\B)$, and then define the induced functor $(RF)_{\C}:\C(D^{+}(\A)) \longrightarrow \C(D^{+}(\B))$. 

We will now study the relationship between these two functors, $R(F_{\C})$ and $(RF)_{\C}$. In order to do this, we must first set up some notation. We use $Q_{\A}$, $K_{\A}$ and $T_{\A}$ for the functors $Q$, $K$ and $T$ defined in the previous Section relatively to the category $\A$; similarly, we use $Q_{\B}$, $K_{\B}$ and $T_{\B}$ for the same functors relatively to the category $\B$.

%\label{caso geral}
Let $\C(\A)$ be a category with enough injectives; for instance, refering either to Proposition \ref{p suf inj} or to Corollary \ref{c suf inj}, assume that either $\C$ has a finite number of objects and morphisms, or $\A$ is a complete category. 
 
Being $A^{\p}$ an object in $D(\C(\A))$, and since $\C(\A)$ has enough injectives, there is a quasi-isomorphism $\alpha : A^{\p} \lra I(A^{\p})^{\p}$,  where $I(A^{\p})^{\p}$ is a complex of injectives objects. The existence of this quasi-isomorphism is established in \cite[Section A.4.5]{BBR}. 

We have therefore an isomorphism
$$ \xymatrix{A^{\p} & A^{\p}\ar[l]_-{Id} \ar[r]^-{\alpha} & I(A^{\p})^{\p}} $$
in $D(\C(\A))$. By the commutativity of the diagram (\ref{diagrama t}), $T_{\A}([\alpha/Id])$ is a natural transformation in $\C(D(\A))$, between $Q_{\A}\circ A$ and $Q_{\A} \circ I(A^{\p}),$ where $A=K_{\A}(A^{\p})$ and $I(A^{\p})=K_{\A}(I(A^{\p})^{\p})$, that is, 
$$ T_{\A}([\alpha/Id])=\{[\alpha_C/Id] \in Hom_{D(\A)}(A(C),I(A^{\p})(C)):C \in Ob(\C)\} $$
such that the following diagram is commutative for each morphism $\xymatrix{C \ar[r]^t & D}$ in $\C$:
$$ \xymatrix{A(C) \ar[rr]^-{[\alpha_C/Id]}\ar[d]_{[A(t)/Id]} & & I(A^{\p})(C) \ar[d]^{[I(A^{\p})(t)/Id]}\\
A(D) \ar[rr]_-{[\alpha_D/Id]} & & I(A^{\p})(D).} $$

Applying the functor $(RF)_{\C}$ to $T_{\A}([\alpha/Id])$, we obtain a natural transformation in $\C(D(\B))$, namely $(RF)_{\C}(T_{\A}([\alpha/Id])): RF \circ Q_{\A} \circ  A \lra  RF \circ Q_{\A} \circ I(A^{\p})$. So, if $\xymatrix{C \ar[r]^t & D}$ is a morphism in $\C$ we can define $ RF \circ Q_{\A} \circ  A(\xymatrix{C \ar[r]^t & D})$ through two steps. First,
$$\begin{array}{ccc}
\xymatrix{C \ar[d]_t \\ D}  &  \xymatrix{ \ar@{|->}[rr]^{Q_{\A} \circ A} & &} & \xymatrix{&A(C)\ar[ld]_{Id} \ar[rd]^{A(t)} & \\
A(C) & & A(D)}.
\end{array}$$
Secondly, applying $RF$ (cf. \cite{H}):
$$\xymatrix{&A(C)\ar[ld]_{q_A(C)} \ar[rd]^{q_A(D) \circ A(t)} & \\
I(A(C))^{\p} \ar[rr]^{I(A(t))} & & I(A(D))^{\p}}, $$
where $q_A(C)$ e $q_A(D)$ are quasi-isomorphism whose existence is guaranteed by the fact that $\A$ has enough injectives. Finally, $ RF \circ Q_{\A} \circ  A(\xymatrix{C \ar[r]^t & D})$ is:
$$\xymatrix{&KF(I(A(C))^{\p})\ar[ldd]_{Id} \ar[rdd]^{KF(I(A(t)))} & \\
& & \\
KF(I(A(C))^{\p})  & & KF(I(A(D))^{\p}).} $$

Similarly, one defines $RF \circ Q_{\A} \circ  I(A^{\p})(\xymatrix{C \ar[r]^t & D})$:
$$ \xymatrix{&KF(I(A^{\p})(C))\ar[ldd]_{Id} \ar[rdd]^{KF(I(A^{\p})(t))} & \\
 & & \\
KF(I(A^{\p})(C))  & & KF(I(A^{\p})(D)).}$$
Moreover
$$ (RF)_{\C}(T_{\A}([\alpha/Id]))=\{RF([\alpha_C/Id]): C \in Ob(\C)\}=\{[KF(I(\alpha_C))/Id]: C \in Ob(\C) \}, $$ namely, for each $C$,  $I(\alpha_C)$ is determined by the diagram: 
\begin{equation}\label{obs comut}
\xymatrix{&A(C)\ar[ld]_{Id} \ar[rd]^{\alpha_C} & \\
A(C) \ar[d]_{q_{A(C)}} &  & I(A^{\p})(C)\ar[d]^{Id} \\
I(A(C))^{\p} \ar[rr]_{I(\alpha_C)} & & I(A^{\p})(C).}
\end{equation}
Noting that, as $q_A(C)$ and $\alpha_C$ are quasi-isomorphisms, $I(\alpha_C)$ is also a quasi-isomorphism. 
Therefore $(RF)_{\C}(T_{\A}([\alpha/Id]))=\{[KF(I(\alpha_C))/Id]: C \in Ob(\C) \}$, such that, for each morphism $\xymatrix{C \ar[r]^t & D}$, the diagram
%onde  
%$$ \xymatrix{& KF(I(A(C)))\ar[ldd]_{Id} \ar[rdd]^{KF(I(\alpha_C))} & \\
%& & \\
%KF(I(A(C)))  & & KF(I(A^{\p})(C)),}$$
\begin{equation}\label{desc alfa}
\xymatrix{ KF(I(A(C))^{\p})\ar[dd]_{[KF(I(A(t))/Id]} \ar[rrr]^{[KF(I(\alpha_C))/Id]} & & & KF(I(A^{\p})(C))\ar[dd]^{[KF(I(A^{\p})(t))/Id]}\\
& & & \\
 KF(I(A(D))^{\p}) \ar[rrr]_{[KF(I(\alpha_D))/Id]} & & & KF(I(A^{\p})(D)) }
\end{equation}
is commutative; in other words, we have the following commutative diagram in $K(\B)$:
\begin{tiny}$$\xymatrix{ & & W^{\p}\ar[ld]_{\gamma} \ar[rd]^{\gamma} & &\\
& KF(I(A(C))^{\p}) \ar[ld]_{Id} \ar[rrrd]_{ (1)} & & KF(I(A(C))^{\p})  \ar[llld]^{Id} \ar[rd]^-{(2)}  &\\
KF(I(A(C))^{\p}) & & & & KF(I(A^{\p})(D)),}
$$\end{tiny}
where $(1)=KF(I(A^{\p})(t)) \circ KF(I(\alpha_C)) $  and $(2)= KF(I(\alpha_D)) \circ KF(I(A(t)))$. Therefore,
\begin{equation}\label{obs corolario}
\bigg(KF(I(\alpha_D)) \circ KF(I(A(t)))\bigg) \sim \bigg( KF(I(A^{\p})(t)) \circ KF(I(\alpha_C)) \bigg)
\end{equation}
It is worthy noting that $I(\alpha_C)$ is a quasi-isomorphism between complexes of injective objects. Under these conditions it is easy to verify that $I(\alpha_C)$ is an isomorphism in $K(\A)$ and therefore $KF(I(\alpha_C))$ is an isomorphisms in $K(\B)$, for all $C \in Ob(\C)$.

\begin{lema}\label{comutativo}
Let $\A$ be an abelian category with enough injectives, and let $\C$ be a small category, such that,
either $\A$ is complete or $\C$ has a finite number of objects and morphisms. Then, for every left exact functor
$F: \A \lra \B$, the following diagram
\begin{equation}\label{diag lema}
\xymatrix{ D^+(\C(\A)) \ar[rr]^{R(F_{\C})}\ar[d]_{T_{\A}} & & D^+(\C(\B)) \ar[d]^{T_{\B}}   \\
 \C(D^+(\A)) \ar[rr]_{(RF)_{\C}} & &  \C(D^+(\B))  }
\end{equation}
 is commutative.
\end{lema}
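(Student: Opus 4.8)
The plan is to prove the equality of functors $T_{\mathcal{B}}\circ R(F_{\mathcal{C}})=(RF)_{\mathcal{C}}\circ T_{\mathcal{A}}$ by evaluating both sides first on objects, then on morphisms of $\mathcal{C}$ (recall that objects of $\mathcal{C}(D^+(\mathcal{B}))$ are themselves functors $\mathcal{C}\to D^+(\mathcal{B})$, so agreement on objects of $\mathcal{C}$ must also be checked), and finally on morphisms of $D^+(\mathcal{C}(\mathcal{A}))$. Throughout I fix $A^{\bullet}\in Ob(D^+(\mathcal{C}(\mathcal{A})))$ together with the quasi-isomorphism $\alpha:A^{\bullet}\lra I(A^{\bullet})^{\bullet}$ into a complex of injectives of $\mathcal{C}(\mathcal{A})$ chosen as in the discussion preceding the statement, and I write $A=K_{\mathcal{A}}(A^{\bullet})$ and $I(A^{\bullet})=K_{\mathcal{A}}(I(A^{\bullet})^{\bullet})$ for the associated functors $\mathcal{C}\to Kom(\mathcal{A})$.

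First I would compute the left-hand composition on objects. Since $R(F_{\mathcal{C}})(A^{\bullet})=F_{\mathcal{C}}(I(A^{\bullet})^{\bullet})$ and $F_{\mathcal{C}}$ acts term by term, applying $T_{\mathcal{B}}$ — which on objects is $K_{\mathcal{B}}$ followed by pointwise $Q_{\mathcal{B}}$-localization, using that $K_{\mathcal{B}}$ commutes with evaluation — produces the functor $C\mapsto[\,F(I(A^{\bullet})(C)^{\bullet})\,]$ in $D^+(\mathcal{B})$. The right-hand composition sends $A^{\bullet}$ first to $T_{\mathcal{A}}(A^{\bullet})=Q_{\mathcal{A}}\circ A$, the functor $C\mapsto A(C)^{\bullet}$, and then $(RF)_{\mathcal{C}}$ sends this to $C\mapsto RF(A(C)^{\bullet})$.

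The heart of the argument is to identify these two objects of $D^+(\mathcal{B})$ for each $C$, and here lies what I expect to be the main obstacle. The key claim is that $I(A^{\bullet})(C)^{\bullet}$ is an injective resolution of $A(C)^{\bullet}$ in $\mathcal{A}$: indeed $\alpha_C:A(C)^{\bullet}\lra I(A^{\bullet})(C)^{\bullet}$ is a quasi-isomorphism by Proposition \ref{quase iso} (evaluation at $C$ is exact), while each term $I(A^{\bullet})^n(C)$ is injective in $\mathcal{A}$ because the injectives of $\mathcal{C}(\mathcal{A})$ furnished by Proposition \ref{p suf inj} (resp. Corollary \ref{c suf inj}) evaluate, at any object of $\mathcal{C}$, to products of injectives of $\mathcal{A}$ (finite products when $\mathcal{C}$ has finitely many morphisms), and such products are again injective. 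Granting this, the independence of $RF$ from the chosen injective resolution gives $RF(A(C)^{\bullet})\cong[\,F(I(A^{\bullet})(C)^{\bullet})\,]$, matching the left-hand side. This acyclicity step is precisely what reconciles ``resolve in $\mathcal{C}(\mathcal{A})$, then evaluate'' with ``evaluate, then resolve in $\mathcal{A}$,'' and verifying it carefully against whichever construction of enough injectives is in force is the delicate point.

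It then remains to match the two sides on morphisms. For $t:C\lra D$ in $\mathcal{C}$ I would invoke the commutative square (\ref{desc alfa}) and the homotopy relation (\ref{obs corolario}) already established, together with the observation that $KF(I(\alpha_C))$ is an isomorphism in $K(\mathcal{B})$ (being the image under $KF$ of the quasi-isomorphism $I(\alpha_C)$ of complexes of injectives), so that both functors assign the same morphism $RF(A(t))$ in $D^+(\mathcal{B})$; this upgrades the pointwise agreement to an equality of functors $\mathcal{C}\to D^+(\mathcal{B})$. Finally, for a morphism $[f/\beta]:A^{\bullet}\lra B^{\bullet}$ in $D^+(\mathcal{C}(\mathcal{A}))$, the same bookkeeping — lifting the roof to injectives of $\mathcal{C}(\mathcal{A})$ and then evaluating at each $C$ — shows that $T_{\mathcal{B}}(R(F_{\mathcal{C}})([f/\beta]))$ and $(RF)_{\mathcal{C}}(T_{\mathcal{A}}([f/\beta]))$ are represented by the same families of roofs in $D^+(\mathcal{B})$, which completes the proof that diagram (\ref{diag lema}) commutes.
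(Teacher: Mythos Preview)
Your proposal is correct and follows the same two-step strategy (objects, then morphisms) as the paper's own proof, invoking the same comparison isomorphisms $KF(I(\alpha_C))$ and the relations (\ref{desc alfa}) and (\ref{obs corolario}). You are in fact more explicit than the paper on a point it glosses over: you justify why each $I(A^{\bullet})^n(C)$ is injective in $\mathcal{A}$ by appealing to the specific construction of injectives in Proposition~\ref{p suf inj} and Corollary~\ref{c suf inj}, which is precisely what is needed to conclude that $I(\alpha_C)$ is a homotopy equivalence and hence that $KF(I(\alpha_C))$ is an isomorphism in $K(\mathcal{B})$.
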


\begin{proof}
The proof is done in two steps. In the first step, it is demonstrated that diagram (\ref{diag lema}) is commutative for the objects. In the second step, the same is done for the morphisms.

\noindent {\bf 1$^{\rm st}$ step:}  Set $A^{\p} \in Ob(D^+(\C(\A)))$, so that 
$T_{\A}(A^{\p})=Q_{\A} \circ A:   \C   \longrightarrow  D(\A)$ is defined by:
$$\begin{array}{ ccl}
             \xymatrix{C\ar[d]_-{t} \\
      D} & \longmapsto & \xymatrix{& A(C) \ar[ld]_{Id}\ar[rd]^{A(t)} & \\
    A(C) & & A(D),} 
\end{array}$$
where $A=K_{\A}(A^{\p}) \in Ob(\C(Kom(\A))$. 

The composition  of $(RF)_{\C}$ with $T_{\A}(A^{\p})$ is a functor between $\C$ and  $D(\B) $ defined by
\begin{small}$$\begin{array}{ ccl}
            \xymatrix{C\ar[d]_-{t} \\
      D} & \longmapsto & \xymatrix{& KF(I(A(C))^{\p}) \ar[ld]_{Id}\ar[rd]^{KF(I(A(t)))} & \\
    KF(I(A(C))^{\p}) & (I)& KF(I(A(D))^{\p}).} 
\end{array}$$\end{small}
On the other hand, 
$$R(F_{\C})(A^{\p})=K(F_{\C})(I(A^{\p})^{\p}) =(F_{\C}( I(A^{\p})))^{\p} = (F \circ I(A^{\p}))^{\p} $$ 
belongs to $Ob(D(\C(\B))$.  Thus, defining $F \circ I(A^{\p})=K_{\B}((F \circ I(A^{\p}))^{\p})$, we have 
$$T_{\B} \circ R(F_{\C}) (A^{\p})=Q_{\B} \circ (F \circ (I(A^{\p})) :   \C   \longrightarrow D(\B) $$

\begin{small}$$\begin{array}{ccl}
            \xymatrix{C\ar[d]_-{t} \\
      D} & \longmapsto & \xymatrix{&(F \circ I(A^{\p}))(C) \ar[ld]_{Id}\ar[rd]^{(F \circ I(A^{\p}))(t)} & \\
    (F\circ I(A^{\p}))(C) & & (F \circ I(A^{\p}))(D).} 
\end{array}$$\end{small}

Since $(F \circ I(A^{\p}))(C)= KF(I(A^{\p})(C))$ and $(F \circ I(A^{\p}))(t)=KF(I(A^{\p})(t))$, for all $C \in Ob(\C)$ and for all morphism $t$ of $\C$ then $T_{\B} \circ R(F_{\C}) (A^{\p})$ can be defined by:
\begin{small}$$\begin{array}{ccl}
            \xymatrix{C\ar[d]_-{t} \\
      D} & \longmapsto & \xymatrix{& KF(I(A^{\p})(C)) \ar[ld]_{Id}\ar[rd]^{KF(I(A^{\p})(t))} & \\
    KF(I(A^{\p})(C)) & (II) & KF(I(A^{\p})(D)).} 
\end{array}$$\end{small}

Therefore, just need to check that the roofs $(I)$ and $(II)$ are equivalent. 

Since $KF(I(\alpha_C))$ and $KF(I(\alpha_D))$ are isomorphisms, the roof $(I)$  is equivalent to:
$$\xymatrix{& KF(I(A(C))) \ar[ld]_{KF(I(\alpha_C))}\ar[rd]^{\qquad KF(I(A(t))) \circ KF(I(\alpha_D)) } & \\
    KF(I(A^{\p})(C)) & & KF(I(A^{\p})(D)).} $$
Then,  by (\ref{obs corolario}), we have following commutative diagram in $K(\B)$:
\begin{small}  $$\xymatrix{  & KF(I(A(C))) \ar[dl]_{Id} \ar[dr]^{KF(I(\alpha_C))} &  \\
  KF(I(A(C)))\ar[d]_{KF(I(\alpha_C))}\ar[ddrr]^{\qquad KF(I(A(t)) \circ KF(I(A(D))}&  & KF(I(A^{\p})(C)) \ar[dll]_{Id}\ar[dd]^{KF(I(A^{\p}(t))} \\
 KF(I(A^{\p})(C)) &   & \\
 & & KF(I(A^{\p})(D)).}$$\end{small}
It follows that (\ref{diag lema}) is commutative in the objects.

{\bf 2$^{\rm nd}$ step:}  Let $[f/\phi]$ be a morphism in $D(\C(\A))$ represented by the roof:
$$\xymatrix{& L^{\p} \ar[ld]_{\phi}\ar[rd]^{f} & \\
    E^{\p} & & G^{\p}} $$
Then $T_{\A}([f/\phi])=\{[f_C/\phi_C]: C \in Ob(\C)\}$, and for any morphism $\xymatrix{C \ar[r]^t & D}$ in $\C$, the diagram $$ \xymatrix{ E(C) \ar[r]^{[f_C/ \phi_C]} \ar[d]_{[E(t)/Id]} & G(C) \ar[d]^{[G(t)/Id]}\\
E(D) \ar[r]_{[f_D/ \phi_D]}  & G(D)} $$
is commutative. Applying $(RF)_{\C}$ 
$$(RF)_{\C} \circ T_{\A}([f/\phi])= \{RF([f_C/\phi_C]): C \in Ob(\C)\}.$$
This means that for each $C \in Ob(\C)$ we can construct $I(f_C/\phi_C)$,
$$\xymatrix{& L(C)\ar[ld]_{\phi_C}\ar[rd]^{f_C} & \\
E(C)\ar[d]_{q-iso} & & G(C)\ar[d]^{q-iso}\\
I(E(C))\ar[rr]_{I(f_C/\phi_C)} & & I(G(C)),}$$
and $(RF)_{\C} \circ T_{\A}([f/\phi])= \{[KF(I(f_C/\phi_C))/Id]: C \in Ob(\C)\}$, that is, for each $C$ the class is represented by
$$ \xymatrix{& KF(I(E(C)))\ar[ld]_{Id}\ar[rd]^{\quad KF(I(f_C/\phi_C))} & \\
KF(I(E(C))) & & KF(I(G(C))).} $$
On the other hand, applying $R(F_{\C})$ over $[f/\phi]$ we have
$$\xymatrix{& L^{\p}\ar[ld]_{\phi}\ar[rd]^{f} & \\
E^{\p}\ar[d]_{q-iso} & & G^{\p}\ar[d]^{q-iso}\\
I(E^{\p})\ar[rr]_{I(f/\phi)} & & I(G^{\p})}$$
$$\xymatrix{& K(F_{\C})(I(E^{\p}))\ar[ld]_{Id}\ar[rd]^{\quad K(F_{\C})(I(f/\phi))} & \\
K(F_{\C})(I(E^{\p})) & & K(F_{\C})(I(G^{\p})),}$$
Where $K(F_{\C})(I(E^{\p}))=(F \circ I(E^{\p}))^{\p}$ and $K(F_{\C})(I(f/\phi))=[F_{\C}(I(f/\phi))]$ then the following roof is equivalent to the roof above in $D(\C(\B))$ 
$$\xymatrix{& (F \circ I(E^{\p}))^{\p}\ar[ld]_{Id}\ar[rd]^{\quad [F_{\C}(I(f/\phi))]} & \\
(F \circ I(E^{\p}))^{\p} & & (F \circ I(G^{\p}))^{\p},}$$

Applying $T_{\B}$:
$$T_{\B} ([F_{\C}(I(f/\phi))/Id])=\{[(F_{\C}(I(f/\phi)))_C/Id]: C \in Ob(\C)\}.$$
and, by definition $F_{\C}$, $(F_{\C}(I(f/\phi)))_C = (F((I(f/\phi))_C)^{\p} = KF((I(f/\phi))_C)$ thus,
$$T_{\B} ([F_{\C}(I(f/\phi))/Id])=\{[KF((I(f/\phi))_C)/Id]: C \in Ob(\C)\}.$$

So, to have $(RF)_{\C} \circ T_{\A}([f/\phi])= T_{\B} \circ R(F)_{\C}([f/\phi])$, we need to prove that
$$KF(I(f_C/\phi_C)) \sim KF((I(f/\phi))_C).$$

Let $\xi: E^{\p} \lra I(E^{\p})$ and $\theta:G^{\p} \lra I(G^{\p})$ be quasi-isomorphisms. We then have the following commutative diagrams, determined as in (\ref{obs comut}):
\begin{small}$$\begin{array}{ccc}
\xymatrix{& E(C)\ar[ld]_{q_{E(C)}} \ar[rd]^{\xi_C} & \\
I(E(C))^{\p}\ar[rr]_{I(\xi_C)} &  & I(E^{\p})(C)}  & & \xymatrix{& G(C)\ar[ld]_{q_{G(C)}} \ar[rd]^{\theta_C} & \\
I(G(C))^{\p}\ar[rr]_{I(\theta_C)} & & I(G^{\p})(C),}
\end{array}$$\end{small}
where $q_{E(C)}$, $q_{G(C)}$, $\xi_C$, $\theta_C$ are quasi-isomorphisms, $ I(\xi_C)$ and $I(\theta_C)$ are isomorphisms. 
Then, by the definition of $I(f/\phi)$ and of $I(f_C/\phi_C)$, the diagram
\begin{small}
$$\xymatrix{& L(C) \ar[ld]_{\phi_C}\ar[rd]^{f_C} & \\
E(C)\ar@/_1cm/[dd]_{\xi_C} \ar[d]^{q_{E(C)}} & & G(C)\ar@/^1cm/[dd]^{\theta_C}\ar[d]_{q_{G(C)}}\\
I(E(C))\ar[d]^{I(\xi_C)} \ar[rr]_{I(f_C/\phi_C)} & & I(G(C)) \ar[d]_{I(\theta_C)}\\
I(E^{\p})(C) \ar[rr]_{(I(f/\phi))_C} & & I(G^{\p})(C). }$$
\end{small}
is commutative in  $K(\A)$. Thus $I(f_C/\phi_C) \sim (I(f/\phi))_C$, and consequently
$$ KF(I(f_C/\phi_C)) \sim KF((I(f/\phi))_C).$$
\end{proof}

\begin{teo}\label{teo meu final}
Let $\A$ be an abelian category with enough injectives. Let $\Q$ the category generated by a finite quiver $Q$, and assume that
$RF:D^+(\A) \lra D^+(\B)$ is an equivalence of categories. Then $R(F_{\Q}):D^+(\Q(\A)) \lra D^+(\Q(\B))$ is also an equivalence of categories.
\end{teo}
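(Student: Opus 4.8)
The plan is to read off the statement from the commutative square (\ref{diag lema}) of Lemma \ref{comutativo}, specialized to $\C=\Q$. In that square the two vertical functors $T_{\A}$ and $T_{\B}$ are full and faithful by Theorem \ref{t fi ple} (since $\Q$ is generated by the quiver $Q$), while the bottom horizontal functor $(RF)_{\Q}$ is an equivalence: indeed $RF:D^+(\A)\lra D^+(\B)$ is an equivalence by hypothesis, so applying Proposition \ref{prop equiv funtor induzido} with the categories $D^+(\A),D^+(\B)$ and the functor $RF$ yields that $(RF)_{\Q}:\Q(D^+(\A))\lra\Q(D^+(\B))$ is an equivalence. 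The whole task is then to transport these three properties across the square to the remaining functor $R(F_{\Q})$.

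First I would establish that $R(F_{\Q})$ is full and faithful by a direct chase in the square. For faithfulness, if $R(F_{\Q})(u)=R(F_{\Q})(v)$, then applying $T_{\B}$ and using $T_{\B}\circ R(F_{\Q})=(RF)_{\Q}\circ T_{\A}$ gives $(RF)_{\Q}(T_{\A}(u))=(RF)_{\Q}(T_{\A}(v))$; faithfulness of $(RF)_{\Q}$ forces $T_{\A}(u)=T_{\A}(v)$, and faithfulness of $T_{\A}$ gives $u=v$. For fullness, given $w:R(F_{\Q})(X)\lra R(F_{\Q})(Y)$, the morphism $T_{\B}(w)$ is a map $(RF)_{\Q}(T_{\A}X)\lra(RF)_{\Q}(T_{\A}Y)$; since $(RF)_{\Q}$ is full and faithful there is a unique $v:T_{\A}X\lra T_{\A}Y$ with $(RF)_{\Q}(v)=T_{\B}(w)$, fullness of $T_{\A}$ lifts $v$ to some $u:X\lra Y$, and faithfulness of $T_{\B}$ then forces $R(F_{\Q})(u)=w$. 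This portion is routine.

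The essential surjectivity of $R(F_{\Q})$ is the main obstacle, precisely because $T_{\A}$ and $T_{\B}$ are \emph{not} essentially surjective (the example closing Section 3 exhibits $D(\Q(\A))$ as a proper full subcategory of $\Q(D(\A))$), so one cannot simply invert $T_{\B}$. Writing $\mathcal{I}_{\A}$ and $\mathcal{I}_{\B}$ for the essential images of $T_{\A}$ and $T_{\B}$, I would reduce the problem to showing that the equivalence $(RF)_{\Q}$ carries $\mathcal{I}_{\A}$ onto $\mathcal{I}_{\B}$. One inclusion is automatic from commutativity: if $W\cong T_{\A}(X)$ then $(RF)_{\Q}(W)\cong T_{\B}(R(F_{\Q})(X))\in\mathcal{I}_{\B}$. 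Granting the reverse inclusion, for $Z\in D^+(\Q(\B))$ one uses that $(RF)_{\Q}$ is an equivalence to pick $W$ with $(RF)_{\Q}(W)\cong T_{\B}(Z)\in\mathcal{I}_{\B}$; the reverse inclusion places $W$ in $\mathcal{I}_{\A}$, so $W\cong T_{\A}(X)$ for some $X\in D^+(\Q(\A))$, whence $T_{\B}(R(F_{\Q})(X))=(RF)_{\Q}(T_{\A}(X))\cong T_{\B}(Z)$. Since a fully faithful functor reflects isomorphisms, $R(F_{\Q})(X)\cong Z$, which finishes essential surjectivity.

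The hard part is thus the reverse inclusion $(RF)_{\Q}^{-1}(\mathcal{I}_{\B})\subseteq\mathcal{I}_{\A}$. My plan is to characterize the essential image intrinsically: because $\Q$ is freely generated by $Q$, an object of $\Q(D^+(\A))$ is a representation $(W_{i},w_{a})$ with $W_{i}\in D^+(\A)$ and $w_{a}:W_{t(a)}\lra W_{h(a)}$, and it lies in $\mathcal{I}_{\A}$ exactly when it admits a simultaneous lift to the level of complexes, i.e. complexes $C_{i}\in Kom^+(\A)$ together with genuine chain maps $\psi_{a}:C_{t(a)}\lra C_{h(a)}$ representing the $W_{i}$ and the $w_{a}$ -- which is precisely the datum of an object of $\C(Kom^+(\A))\cong Kom^+(\Q(\A))$ mapping to $W$ under $Q_{\Q}\circ K$. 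I would then argue that this liftability condition is reflected by $(RF)_{\Q}$, exploiting that $(RF)_{\Q}$ acts vertexwise through $RF$ and that $\A$, hence $\Q(\A)$ by Corollary \ref{c suf inj}, has enough injectives, so that both lifts may be computed through injective resolutions on which $RF$ is realized. Verifying that a chain-level lift in $\B$ can be pulled back along the derived equivalence $RF$ to a chain-level lift in $\A$, \emph{compatibly across all arrows of $Q$ at once}, is the delicate point; it is here that the finiteness of the quiver and the freeness of $\Q$ enter, allowing the per-vertex lifts to be assembled into a single representation.
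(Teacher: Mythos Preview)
Your argument for full faithfulness is correct and matches the paper's: both use the commutativity of the square (\ref{diag lema}) together with the full faithfulness of $T_{\A}$, $T_{\B}$ and of the equivalence $(RF)_{\Q}$. You spell out the chase in more detail than the paper does, but the content is the same.

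For essential surjectivity, your route diverges from the paper's. You reduce to the inclusion $(RF)_{\Q}^{-1}(\mathcal I_{\B})\subseteq\mathcal I_{\A}$ and then propose to verify it by characterizing $\mathcal I_{\A}$ as the ``simultaneously liftable'' representations and arguing that the derived equivalence reflects this liftability. The paper bypasses this abstraction entirely: given $B^{\p}\in D^+(\Q(\B))$, it builds the preimage $A^{\p}$ \emph{directly}. For each vertex $i$ it chooses $A_i^{\p}$ with $RF(A_i^{\p})\simeq B(i)$ (essential surjectivity of $RF$); for each arrow $p:i\to j$ it uses the bijection $RF:\mathrm{Hom}_{D(\A)}(A_i^{\p},A_j^{\p})\to\mathrm{Hom}_{D(\B)}(RF(A_i^{\p}),RF(A_j^{\p}))$ to pull back $[B(p)/\mathrm{Id}]$ to a unique $[f/\phi]$, and then picks a chain-level representative $A(p)$ of $I([f/\phi])$. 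Freeness of $\Q$ makes $A:\Q\to Kom^+(\A)$ a well-defined functor, hence via $K_{\A}^{-1}$ an object $A^{\p}\in D^+(\Q(\A))$. The isomorphism $R(F_{\Q})(A^{\p})\simeq B^{\p}$ is then produced by invoking Lemma \ref{telhado} at each arrow, exactly as in the proof of Theorem \ref{t fi ple}.

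Your plan is not wrong, but the ``delicate point'' you flag---pulling back a chain-level lift in $\B$ to one in $\A$ compatibly across all arrows---is precisely this vertex-and-arrow construction, and carrying it out would reproduce the paper's argument inside your extra layer of essential-image bookkeeping. The paper's direct construction is shorter because it never needs to name $\mathcal I_{\A}$, $\mathcal I_{\B}$, or prove anything about them; it just writes down $A^{\p}$ and checks the isomorphism. I would recommend dropping the reduction to $(RF)_{\Q}^{-1}(\mathcal I_{\B})\subseteq\mathcal I_{\A}$ and doing the construction head-on.
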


\begin{remark} The finiteness of the quiver $Q$ is a sufficient condition for $\Q(\A)$ to have enough injectives. Alternatively, it may be replaced by the completeness of the category $\A$. 
\end{remark}

\begin{proof}
Since, by the hypotheses, $RF$ is an equivalence, then, as a consequence of the Proposition \ref{prop equiv funtor induzido}, $(RF)_{\Q}$ is also an equivalence, and hence is full and faithful. Furthermore, $T_{\A}$ and $T_{\B}$ are also full and faithful. Then, by the commutativity of the diagram (\ref{diag lema}), we have that $R(F_{\Q})$ is full and faithful as well. Therefore, in order to complete the proof, it is sufficient to prove that $R(F_{\Q})$  is essentially surjective.

Indeed, set $B^{\p} \in Ob(D^+(\Q(\B))$; using the equivalence $K_{\B}$, we have the functor $B: \Q \lra Kom^+(\B)$. 

Then, for each $i \in Ob(\Q)$,  $B(i) \in Ob(D^+(\B))$. As $RF$ is essentially surjective, there exist an object $A^{\p}_i$ of $D^+(\A)$ such that $RF(A^{\p}_i)\simeq B(i)$, that is, $KF(I(A^{\p}_i)) \simeq B(i)$ in $D^+(\B)$.

Taking a morphism $\xymatrix{i \ar[r]^{p} &  j}$ in $\Q$ we have a morphism $\xymatrix{B(i) \ar[r]^{B(p)} & B(j)}$ in $Kom^+(\B)$, which yields a morphism 
$$\xymatrix{& B(i) \ar[ld]_{Id} \ar[rd]^{[B(p)]} & \\
B(i) & & B(j)}$$
in $D(\B)$. Since $Hom_{D(\B)}(B(i),B(j)) \simeq Hom_{D(\B)}(KF(I(A^{\p}_i)),KF(I(A^{\p}_j)))$, there exist a morphism $[g/\psi] \in Hom_{D(\B)}(KF(I(A^{\p}_i)),KF(I(A^{\p}_j)))$ corresponding to $[B(p)/Id]$.

Since $RF$ is full and faithful $RF: Hom_{D(\A)}(A^{\p}_i,A^{\p}_j) \lra   Hom_{D(\B)}(KF(I(A^{\p}_i)),KF(I(A^{\p}_j)))$ is bijective, thus it follows that there is an unique $[f/\phi] \in Hom_{D(\A)}(A^{\p}_i,A^{\p}_j)$ such that $RF([f/\phi])= [g/\psi]$ or $Q \circ KF(I([f/\phi]))= [g/\psi]$. 

Our goal is to define a functor between $\Q$ and $Kom^+(\A)$. As $I([f/\phi])$ is a morphism in $K^+(\A)$, we can choose a representative $A(p)$ in $Kom^+(\A)$ for it, that is, $[A(p)]=I([f/\phi])$. Let us define the following functor
$$\begin{array}{cccc}
A:& \Q & \lra & Kom^+(\A) \\
 & \xymatrix{i \ar[d]_p \\ j} & \longmapsto & \xymatrix{A(i)=A^{\p}_i \ar[d]^{A(p)} \\ A(j)=A^{\p}_j.}
\end{array}$$

Since $K_{\A}$ is an equivalence, there exists $A^{\p} \in Ob(D^+(\Q(\A)))$ such that $K_{\A}(A^{\p})=A$. We need to prove that $R(F_{\Q})(A^{\p}) \simeq B^{\p}$. Indeed, this means that we need to find quasi-isomorphisms
$\xymatrix{R(F_{\Q})(A^{\p})&C^{\p}\ar[l]_-{\alpha} \ar[r]^{\beta} &B^{\p}}$.

By definition, $K(F_{\Q})(I(A^{\p}))=(F \circ I(A^{\p}))^{\p}$; then, for each $i \in Ob(\Q)$, we have
$$ K(F_{\Q})(I(A^{\p}))(i)=(F \circ I(A^{\p})(i))^{\p}=KF(I(A^{\p})(i)). $$
As stated previously, $KF(I(A^{\p})(i))\simeq KF(I(A(i)))$ in $K(\B)$. On the other hand,
$KF(I(A(i))) \sim B(i) $ in $D(\B)$, so there are quasi-isomorphisms
$$ \xymatrix{(R(F_{\Q})(A^{\p}))(i) & C_i\ar[l]_-{\alpha_i} \ar[r]^{\beta_i} &B(i)}.$$
Given $\xymatrix{i \ar[r]^p & j}$ we have the diagram:
\begin{small}
$$\xymatrix{ & C_i \ar[ld]_{\alpha_i} \ar@{-->}[rrrr]^{C_p} \ar[ldd]^-{\beta_i} & & &  & C_j \ar[ld]_{\alpha_j} \ar[ldd]^{\beta_j} \\
R(F_{\Q})(A^{\p}))(i)  \ar[rrrr]_{R(F_{\Q})(A^{\p}))(p) } & & &  &  R(F_{\Q})(A^{\p}))(j) & \\
B(i) \ar[rrrr]_{B(p)} & & &  & B(j), &}$$
\end{small}
where the existence of $C_p$ is ensured by the Lemma \ref{telhado}. Therefore, using the same arguments as in the proof of Theorem \ref{t fi ple}, we have the quasi-isomorphisms 
  $$\xymatrix{& C^{\p} \ar[ld]_{\alpha} \ar[rd]^{\beta} & \\
R(F_{\Q})(A^{\p}) & & B^{\p},}$$
as desired.
\end{proof}

Finally, note that Lemma \ref{comutativo} and Theorem \ref{teo meu final} also hold when we substitute $D^+$ for $D^{\rm b}$. Namely, we can ensure the commutativity of the diagram
\begin{equation}\nonumber
\xymatrix{ D^{\rm b}(\C(\A)) \ar[rr]^{R(F_{\C})}\ar[d]_{T_{\A}} & & D^{\rm b}(\C(\B)) \ar[d]^{T_{\B}}   \\
 \C(D^{\rm b}(\A)) \ar[rr]_{(RF)_{\C}} & &  \C(D^{\rm b}(\B))  }
\end{equation}
and, under the same hypothesis as Theorem \ref{teo meu final}, one can also show that if $RF:D^{\rm b}(\A) \lra D^{\rm b}(\B)$ is an equivalence of categories, then $R(F_{\Q}):D^{\rm b}(\Q(\A)) \lra D^{\rm b}(\Q(\B))$ is also an equivalence.

These results are due to the fact that, for every abelian category $\A$, $D^{\rm b}(\A)$ is equivalent to a full subcategory of 
$D^+(\A)$.

%%%%%%%%%%%%%%%%%%%%%%%%%%%%%%%%%%%%%%%%%%%%%%%%%%%%%%%%%%%%%%%%%%%%%%%

\section{Fourier-Mukai transform of $Q$-sheaves}\label{fm}

In this last Section of the paper, we will concentrate on categories of sheaves on algebraic varieties; more precisely, let
$X$ be a noetherian, separated scheme of finite type over an algebraically closed field $\K$, and let $\Qc(X)$ and $\Co(X)$ denote the categories of quasi-coherent and coherent sheaves on $X$, respectively.

We denote by $D(X)$ the derived category of complexes of quasi-coherent sheaves with coherent cohomology, i.e.
$D(X):=D_{\Co(X)}(\Qc(X))$. The derived categories $D^*(X)$ with $*=b,-,+$ are defined analogously. Recall that $D^{\rm b}(X)$ is equivalent to $D^{\rm b}(\Co(X))$.

\subsection{Derived categories of $Q$-sheaves}

Given a quiver $Q$, we define a \emph{quasi-coherent $Q$-sheaf on $X$} as a functor from $\Q \lra \Qc(X)$, and we denote by
$\Q Q(X)$ the category $\Q(\Qc(X))$ of the quasi-coherent $Q$-sheaves on $X$. Similarly, we define a \emph{coherent $Q$-sheaf on $X$} as a functor from $\Q \lra \Co(X)$, and we denote by $\Q C(X)$ the category $\Q(\Co(X))$ of the coherent $Q$-sheaves on $X$; cf. \cite{ACGP,GK}. Several types of sheaves with additional structure (or \emph{decorated sheaves}) considered in the literature, like Higgs bundles, parabolic bundles, coherent systems and holomorphic triples, can be regarded as $Q$-sheaves.

\begin{lema}
The categories $\Q Q(X)$ and $\Q C(X)$ are abelian. Moreover, if $Q$ is a finite quiver then $\Q Q(X)$ has enough injectives.
\end{lema}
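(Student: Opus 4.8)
The plan is to reduce both assertions to the general results already established for abstract functor categories $\C(\A)$. For the abelian statement I would first recall the standard fact that, for a noetherian separated scheme $X$ of finite type over $\K$, the categories $\Qc(X)$ and $\Co(X)$ of quasi-coherent and coherent sheaves are abelian. Since by definition $\Q Q(X)=\Q(\Qc(X))$ and $\Q C(X)=\Q(\Co(X))$ are functor categories of the form $\C(\A)$ with $\C=\Q$ small and $\A$ abelian, Proposition \ref{abeli} applies directly and shows that both $\Q Q(X)$ and $\Q C(X)$ are abelian. No additional argument is required for this part.

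For the existence of enough injectives in $\Q Q(X)$ I would use that $\Qc(X)$ is a Grothendieck category, and hence in particular a complete abelian category with enough injectives. This makes two routes available, exactly as in the remark following Theorem \ref{teo meu final}. When $Q$ has no oriented cycles, so that the generated category $\Q$ has finitely many objects and morphisms, Corollary \ref{c suf inj} gives the claim at once. For a general finite quiver, where $\Q$ may have infinitely many morphisms, I would instead invoke the completeness of $\Qc(X)$ and apply Proposition \ref{p suf inj}, which produces enough injectives in $\Q(\Qc(X))$ with no finiteness hypothesis on $\Q$. Either way one concludes that $\Q Q(X)$ has enough injectives.

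The point to be careful about is why the injectivity assertion is stated only for the quasi-coherent category and not for $\Q C(X)$. The obstacle is that $\Co(X)$ does not have enough injectives in general, since injective objects in the category of coherent sheaves typically fail to be coherent; thus neither Proposition \ref{p suf inj} nor Corollary \ref{c suf inj} can be applied with $\A=\Co(X)$. The proof therefore genuinely needs to pass to $\Qc(X)$, whose Grothendieck property supplies both the completeness and the enough-injectives input. The only minor subtlety is the meaning of \emph{finite quiver}: if oriented cycles are allowed then $\Q$ has infinitely many paths, and one must take the completeness route via Proposition \ref{p suf inj} rather than the counting route via Corollary \ref{c suf inj}.
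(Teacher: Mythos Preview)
Your proposal is correct and follows essentially the same route as the paper: both parts reduce to the general results on functor categories, invoking Proposition~\ref{abeli} for the abelian claim and the enough-injectives statements for $\Q Q(X)$.

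The one genuine difference is in the second part. The paper simply cites Corollary~\ref{c suf inj}, which requires $\Q$ to have finitely many objects \emph{and} morphisms. You correctly notice that a finite quiver with oriented cycles generates infinitely many paths, so $\Q$ has infinitely many morphisms and Corollary~\ref{c suf inj} does not literally apply; you then cover this case via Proposition~\ref{p suf inj}, using that $\Qc(X)$ is a Grothendieck category and hence complete with enough injectives. This is a real improvement in precision: the paper's argument tacitly assumes either an acyclic quiver or the completeness route you spell out. Your closing remarks on why no claim is made for $\Q C(X)$ are not in the paper's proof but are accurate and helpful context.
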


\begin{proof}
The first claim stems of the fact that the categories $\Qc(X)$ and $\Co(X)$ are abelian so, using the Proposition
\ref{abeli}, we have that $\Q Q(X)$ and $\Q C(X)$ are also abelian.

Since $X$ is a noetherian scheme, $\Qc(X)$  has enough injectives. Therefore, if $Q$ is a finite quiver, by Corollary  \ref{c suf inj}, we have that $\Q Q(X)$ also has enough injectives.
\end{proof}

Now in order to properly define the derived categories of coherent $Q$-sheaves we will need the following lemma.

\begin{lema}\label{lem 1}
Let $\A'$ be a thick abelian subcategory of $\A$. Then $\C(\A')$ is a thick abelian subcategory of $\C(\A)$. 
\end{lema}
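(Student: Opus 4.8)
The plan is to reduce every requirement to the objectwise (pointwise) level, exploiting the principle---used repeatedly in the proofs of Proposition \ref{prop funtor exato} and Lemma \ref{sei la}---that exactness, kernels and cokernels in $\C(\A)$ are computed by evaluating at each object of $\C$. I take the working definition of a thick abelian subcategory to be a full abelian subcategory closed under subobjects, quotients and extensions; equivalently, for every short exact sequence $0 \to A' \to A \to A'' \to 0$ in $\A$, one has $A \in Ob(\A')$ if and only if $A', A'' \in Ob(\A')$.

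First I would realize $\C(\A')$ as a full subcategory of $\C(\A)$. Composing a functor $\C \to \A'$ with the inclusion $\A' \hookrightarrow \A$ produces a functor $\C \to \A$, giving an inclusion functor $\C(\A') \to \C(\A)$; since $\A'$ is full in $\A$ and the morphisms in both functor categories are natural transformations, this inclusion is full, and its image is exactly the set of $F \in Ob(\C(\A))$ with $F(C) \in Ob(\A')$ for every $C \in Ob(\C)$ (fullness of $\A'$ guarantees that each structural morphism $F(t)$ automatically belongs to $\A'$). That $\C(\A')$ is abelian follows at once from Proposition \ref{abeli} applied to the abelian category $\A'$, and the inclusion is exact because kernels and cokernels are formed objectwise by Lemma \ref{sei la}; hence the abelian structure $\C(\A')$ inherits as a subcategory of $\C(\A)$ agrees with its intrinsic one.

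It then remains to verify the thickness condition, which is where the objectwise principle carries the argument. Let $0 \to F' \to F \to F'' \to 0$ be a short exact sequence in $\C(\A)$. As recalled in the proof of Proposition \ref{prop funtor exato}, this sequence is exact if and only if, for each $C \in Ob(\C)$, the sequence $0 \to F'(C) \to F(C) \to F''(C) \to 0$ is exact in $\A$. If $F', F'' \in Ob(\C(\A'))$, then $F'(C), F''(C) \in Ob(\A')$ for all $C$, and thickness of $\A'$ in $\A$ forces $F(C) \in Ob(\A')$ for every $C$, whence $F \in Ob(\C(\A'))$. Conversely, if $F \in Ob(\C(\A'))$, then each $F(C) \in Ob(\A')$ forces the subobject $F'(C)$ and the quotient $F''(C)$ to lie in $\A'$, so $F', F'' \in Ob(\C(\A'))$. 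This establishes closure under extensions, subobjects and quotients simultaneously.

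I do not anticipate a genuine obstacle: the whole argument amounts to transporting the objectwise characterizations already established in Sections \ref{prelim} and \ref{DCA}. The only point deserving a little care is the bookkeeping in the second step---confirming that the abelian structure $\C(\A')$ receives as a full subcategory of $\C(\A)$ coincides with the one furnished by Proposition \ref{abeli}---and this is settled by the objectwise computation of kernels and cokernels via Lemma \ref{sei la}, so that the membership test ``$F \in Ob(\C(\A'))$ iff $F(C) \in Ob(\A')$ for all $C$'' is compatible with exactness.
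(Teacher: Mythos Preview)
Your proposal is correct and follows essentially the same objectwise reduction as the paper's proof: both evaluate a short exact sequence at each $C \in Ob(\C)$ and invoke thickness of $\A'$ pointwise, then use fullness of $\A'$ to conclude. You are somewhat more thorough than the paper---you explicitly verify that the intrinsic abelian structure on $\C(\A')$ agrees with the inherited one, and you check closure under subobjects and quotients as well as extensions (the paper only spells out the extension direction)---but the underlying strategy is the same.
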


\begin{proof}
Let $F$ and $G$ be objects of $\C(\A')$, thus in particular also objects of $\C(\A)$. Let $H$ be another object in $\C(\A)$ such that the following sequence is exact:
\begin{equation}\label{p lema}
\xymatrix{0 \ar[r] & F \ar[r] & H \ar[r] & G \ar[r] & 0.}
\end{equation}
For each morphism $\xymatrix{C \ar[r]^t & D}$ in $\C$ we have the commutative diagram
\begin{equation}\label{pe lema}
\xymatrix{0 \ar[r] & F(C)\ar[d]^{F(t)} \ar[r] & H(C) \ar[d]^{H(t)} \ar[r] & G(C) \ar[d]^{G(t)} \ar[r] & 0 \\
0 \ar[r] & F(D) \ar[r] & H(D) \ar[r] & G(D) \ar[r] & 0}
\end{equation}
in $\A$, where $F(C)$, $G(C)$, $F(D)$ and $G(D)$ are objects of $\A'$. As $\A'$ is thick, $H(C)$ and $H(D)$ are objects of $\A'$. But $\A'$ is a full subcategory and then $H(t)$ is a morphism in $\A'$.  So, the diagram (\ref{pe lema}) is in $\A'$, then the sequence (\ref{p lema}) is in $\C(\A')$.

Moreover, note that $\C(\A')$ is a full subcategory of $\C(\A)$. Indeed, set $F$ and $G$ in $Ob(\C(\A'))$ and $\e \in Hom_{\C(\A)}(F, G)$. For each $C \in Ob(\C)$, note that $\e_C$ belongs to $Hom_{\A}(F(C),G(C))=Hom_{\A'}(F(C),G(C))$. It follows that 
$\e \in Hom_{\C(\A')}(F, G)$.   
\end{proof}

As a consequence of this Lemma, $\Q C(X)$ is a thick abelian subcategory of $\Q Q(X)$, and we then define:
$$ D(\Q(X)) := D_{\Q C(X)}(\Q Q(X)). $$
Similarly, we define $D^+(\Q(X))$, $D^-(\Q(X))$ and $D^{\rm b}(\Q(X))$. Our next result generalizes the well-known equivalence for the bounded derived category of sheaves on an algebraic variety.

\begin{prop}\label{dbq=dbqc}
If $X$ is an algebraic variety, one has an equivalence of categories
$$ D^{\rm b}(\Q (X)) \simeq D^{\rm b}(\Q C(X)). $$
\end{prop}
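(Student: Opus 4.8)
The plan is to show that the functor
\[
\Phi\colon D^{\rm b}(\Q C(X)) \lra D^{\rm b}(\Q(X)) = D^{\rm b}_{\Q C(X)}(\Q Q(X))
\]
induced by the inclusion $\Q C(X)\hookrightarrow \Q Q(X)$ is an equivalence. This inclusion is exactly the induced functor $\iota_{\Q}$ of the exact inclusion $\iota\colon \Co(X)\hookrightarrow \Qc(X)$, so it is exact by Proposition \ref{prop funtor exato}, and by Lemma \ref{lem 1} its image $\Q C(X)$ is a thick abelian subcategory of $\Q Q(X)$; hence $\Phi$ is a well-defined triangulated functor. The whole content of the statement is that the classical equivalence $D^{\rm b}(\Co(X))\simeq D^{\rm b}(X)$, with $D^{\rm b}(X):=D^{\rm b}_{\Co(X)}(\Qc(X))$, which we are allowed to assume, survives the passage $\A\rightsquigarrow \Q(\A)$.

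First I would prove that $\Phi$ is fully faithful, reducing to the classical case through the functor $T$. Recall from Proposition \ref{quase iso} (together with Lemma \ref{sei la}) that cohomology in $\Q(\A)$ is computed vertex-wise; consequently an object of $D^{\rm b}(\Q(X))$ is precisely a bounded complex $A^{\p}$ in $\Q Q(X)$ with $A^{\p}(i)\in D^{\rm b}(X)$ for every vertex $i$, and the restriction of the fully faithful functor $T_{\Qc}$ of Theorem \ref{t fi ple} sends it to the functor $i\mapsto A^{\p}(i)$, which therefore lands in the full subcategory $\Q(D^{\rm b}(X))\subseteq \Q(D^{\rm b}(\Qc(X)))$. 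Writing $S\colon D^{\rm b}(\Co(X))\lra D^{\rm b}(X)$ for the classical equivalence (itself induced by $\iota$) and $S_{\Q}$ for its induced functor, which is again an equivalence by Proposition \ref{prop equiv funtor induzido}, I obtain a commutative square
\[
\xymatrix{ D^{\rm b}(\Q C(X)) \ar[r]^-{\Phi} \ar[d]_{T_{\Co}} & D^{\rm b}(\Q(X)) \ar[d]^{T_{\Qc}} \\
\Q(D^{\rm b}(\Co(X))) \ar[r]^-{\sim}_-{S_{\Q}} & \Q(D^{\rm b}(X)), }
\]
whose commutativity holds because all four arrows act vertex-wise by the ``same roof''. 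Since $T_{\Co}$ and $T_{\Qc}$ are fully faithful and $S_{\Q}$ is an equivalence, the composite $T_{\Qc}\circ\Phi = S_{\Q}\circ T_{\Co}$ is fully faithful; as $T_{\Qc}$ is in particular faithful, it follows formally that $\Phi$ is fully faithful.

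It then remains to show that $\Phi$ is essentially surjective, which I would obtain by d\'evissage on the cohomological amplitude, using the full faithfulness just established. Let $\mathcal{D}\subseteq D^{\rm b}(\Q(X))$ be the essential image of $\Phi$; it contains every object of $\Q C(X)$ placed in a single degree, and it is closed under cones of morphisms lifted along the full functor $\Phi$. Given $A^{\p}\in D^{\rm b}(\Q(X))$ with top nonzero cohomology in degree $n$, the truncation triangle
\[
\tau_{\le n-1}A^{\p} \lra A^{\p} \lra H^{n}(A^{\p})[-n] \lra (\tau_{\le n-1}A^{\p})[1]
\]
has $H^{n}(A^{\p})\in \Q C(X)$, so both $\tau_{\le n-1}A^{\p}$ (by induction) and $H^{n}(A^{\p})[-n]$ lie in $\mathcal{D}$; the connecting morphism $H^{n}(A^{\p})[-n-1]\to \tau_{\le n-1}A^{\p}$ lifts to $D^{\rm b}(\Q C(X))$ because $\Phi$ is full, and the image under $\Phi$ of its cone is isomorphic to $A^{\p}$. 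Hence $A^{\p}\in\mathcal{D}$, and $\Phi$ is an equivalence.

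The step I expect to demand the most care is the full faithfulness, specifically the verification that the square above genuinely commutes and that $T_{\Qc}$ carries $D^{\rm b}(\Q(X))$ into $\Q(D^{\rm b}(X))$: although $T$ is fully faithful it is in general \emph{not} an equivalence, so one cannot simply transport $S_{\Q}$ back along $T_{\Qc}$. The argument must instead pass through the explicit vertex-wise description of $T$ and check that the roofs representing morphisms, together with the coherence condition imposed on cohomology, are compatible with the arrow maps of the quiver. Once this compatibility is in place, the d\'evissage step is formal and the proof concludes.
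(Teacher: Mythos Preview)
Your proof is correct, but the route differs from the paper's. The paper proves essential surjectivity \emph{first}: given $R^{\p}\in D^{\rm b}(\Q(X))$ it works vertex-by-vertex, invoking Mumford's lemma to find coherent replacements $G_i\to R(i)^{\p}$ and then Lemma \ref{telhado} to assemble the arrow maps $G_p$, exactly as in the proof of Theorem \ref{t fi ple}; full faithfulness is then obtained almost for free, by replacing the apex of any roof through a quasi-coherent $S^{\p}$ with a coherent $G^{\p}$ produced by the essential-surjectivity step. You reverse the order: full faithfulness comes from the commutative square with $T_{\Co}$, $T_{\Qc}$ and the induced equivalence $S_{\Q}$, and essential surjectivity is then a standard d\'evissage on cohomological amplitude via the truncation triangle, using fullness to lift the connecting map. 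Your approach is cleaner in that it packages the vertex-wise bookkeeping into the already-established Theorem \ref{t fi ple} and Proposition \ref{prop equiv funtor induzido}, and the d\'evissage step is entirely formal; the paper's approach, by contrast, avoids any appeal to truncation functors or triangulated structure and stays at the level of explicit roofs, which keeps it closer in spirit to the rest of the paper. Both are valid; the key input in either case is the classical equivalence $D^{\rm b}(\Co(X))\simeq D^{\rm b}(X)$, applied vertex-wise.
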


\begin{proof}
We argue that the inclusion functor $i: DD^{\rm b}(\Q C(X)) \lra DD^{\rm b}_{\Q C(X)}(\Q Q(X))$ is an equivalence.

Let us first prove that $i$ is essentially surjective. Indeed, let $R^{\p} \in D^{\rm b}_{\Q C(X)}(\Q Q(X))$, we want to find $G^{\p} \in D^{\rm b}(\Q C(X))$ such that $i(G^{\p}) \simeq R^{\p}$, that is,  $G^{\p} \simeq R^{\p}$ in $D^{\rm b}_{\Q C(X)}(\Q Q(X))$.

Using the restriction of the functor $K$ of the Lemma \ref{lema kom} we can  consider for each $i \in Ob(\Q)$, $R(i)^{\p} \in Ob(D_{\Co(X)}(\Qc(X))$. By \cite[Lemma II.1] {Mumford} exist $G_C \in Ob(D(\Co(X))$ and a quasi-isomorphism $\alpha_C : G_C \to R(i)^{\p}$.

Therefore, given  a morphism $\xymatrix{i \ar[r]^p & j}$ in $\Q$, we have:
\begin{equation}\xymatrix{ &G_i \ar[ld]_{\alpha_i} \ar@{-->}[rr]^{G_p} \ar[ldd]^-{\alpha_i}& & G_j \ar[ld]_{\alpha_{j}} \ar[ldd]^{\alpha_{j}} \\
R(i)^{\p} \ar[rr]_{R(p)} & & R(j)^{\p} & \\
R(i)^{\p} \ar[rr]_{R(p)} &  & R(j)^{\p} &}\end{equation}
where $G_p$ is given by Lemma \ref{telhado} and using the same arguments of Theorem \ref{t fi ple} we define:
$$\begin{array}{cccc}
G:& \Q & \lra & Kom^+(\Co(X)) \\
 & \xymatrix{i \ar[d]_p \\ j} & \longmapsto & \xymatrix{G(i)=G_i \ar[d]^{G(p)=G_p} \\ G(j)=A_j.}
\end{array}$$
Again, by equivalence $K$ we have $G^{\p} \in Ob(D(\Q C(X))$ and $\alpha$ is  a quasi-isomorphism between $R^{\p}$ and $G^{\p}$, which completes the proof that $i$ is essentially surjective. 

Finally, we prove that $i$ is fully faithful.

Let $R^{\p}$ and $H^{\p}$ objects in $D(\Q C(X))$. A morphism $\xymatrix{R^{\p} \ar[r]^p & H^{\p}}$ in $D_{\Q C(X)}(\Q Q(X))$ can be represented by
$$\xymatrix{&S^{\p} \ar[ld]_{\phi} \ar[rd]^{f} & \\
R^{\p} & & H^{\p},}$$
Then, as $i$ is essentially surjective, there exist $G^{\p}$ in $D(\Q C(X))$ and a quasi-isomorphism $\xymatrix{G^{\p} \ar[r]^{\alpha} & S^{\p}}$ then 
$$\xymatrix{&G^{\p} \ar[ld]_{\phi \circ \alpha} \ar[rd]^{f \circ \alpha} & \\
R^{\p} & & H^{\p},}$$
 define a morphism between $R^{\p} $ and $H^{\p}$ in  $D(\Q C(X))$. Therefore $i$ is fully faithful.
\end{proof}

\subsection{Integral functors and Fourier-Mukai transforms for quiver sheaves}

Recall that given projective varieties $X$ and $Y$ over $\K$ and an object $\cK^{\bullet}\in D^{\rm b}(X\times Y)$, an
\emph{integral functor} with kernel ${\mathcal K}^{\bullet}$ is the functor $\Phi^{\cK^{\bullet}}:D^{\rm b}(X) \to D^{\rm b}(Y)$ given by
$$ \Phi^{{\cK}^{\bullet}}(\cE^\bullet) := R\pi_{Y*}(\pi_X^*\cE^\bullet\otimes^{L}\cK\bullet) ~~. $$
If $\Phi^{{\cK}^{\bullet}}$ is an exact equivalence of derived categories, then it is called a \emph{Fourier-Mukai functor}. Additionally, if the kernel $\cK^{\bullet}=\cK$ is a concentrated complex, i.e. a sheaf, then $\Phi^{{\mathcal K}}$ is called a \emph{Fourier-Mukai transform}. 

Furthermore, recall also that if the kernel $\cK^{\bullet}=\cK$ is a locally free sheaf, then $\Phi^{\cK}$ is the right derived functor of the functor $\phi^{\cK}:\Co(X)\to\Co(Y)$ given by
$$ \phi^{{\cK}}(\cE) := \pi_{Y*}(\pi_X^*\cE\otimes\cK) ~~. $$

Similarly, one may consider integral functors for $Q$-sheaves. More precisely, let ${\mathcal K}^{\bullet}$ be an object of
$D^{\rm b}(\Q C(X\times Y))$. Consider the integral functor
$\Psi^{\cK^{\bullet}}:D^{\rm b}(\Q C(X)) \to D^{\rm b}(\Q C(Y))$ given by
$$ \Psi^{{\mathcal K}^{\bullet}}(\cE^\bullet) := R\pi_{Y*}(\pi_X^*\cE^\bullet\otimes^L\cK\bullet) ~, $$
with the tensor product between $Q$-sheaves being taken vertex-by-vertex and arrow-by-arrow; more precisely:
$$ \left( \{\cE_v\} , \{\phi_a\} \right) \otimes \left( \{\cE'_v\} , \{\phi'_a\} \right) :=
\left( \{\cE_v\otimes \cE'_v\} , \{\phi_a\otimes\phi'_a\} \right) ~.$$

One simple way to construct integral functors for $Q$-sheaves is taking the right derived of the induced functor $\phi^{\cK}_Q:\Q C(X)\to\Q C(Y)$, which yields an integral funtor $R(\phi^{\cK}_Q):D^{\rm b}(\Q C(X)) \to D^{\rm b}(\Q C(Y))$. In comparison with the general integral functors considered in the previous paragraph, the functor $R(\phi^{\cK}_Q)$ is indeed the particular case where ${\mathcal K}^{\bullet}$ is the $Q$-sheaf (i.e. concentrated $Q$-complex) in which all vertices are decorated with the same coherent sheaf $\cK$ on $X\times Y$, and all arrows are decorated with the identity map.

The natural problem that arises is to characterize when an integral functor $\Psi^{\cK^{\bullet}}:D^{\rm b}(\Q C(X)) \to D^{\rm b}(\Q C(Y))$ yields an equivalence of categories, i.e. are Fourier-Mukai functors.

In the following section, we will show that, under certain hypothesis, functors of the form $R(\phi^{\cK}_Q)$ are actually Fourier-Mukai transforms for $Q$-sheaves on abelian varieties.

\subsection{A Mukai Theorem for $Q$-sheaves on abelian varieties}

Now let $X$ denote an abelian variety and $Y$ its dual abelian variety, and consider the integral functor
${\mathcal{S}}: \Qc(X) \lra \Qc(Y)$ defined by $\mathcal{S}(\mathcal{E}):=\pi_{Y*}(\pi^*_{X} \mathcal{E} \otimes \mathcal{P})$, where $\mathcal{P}$ is the Poincaré line bundle over $X \times Y$. Mukai has proved \cite{M} that its derived funtor $R{\mathcal{S}}=\Phi^{\mathcal{P}}_{X \rightarrow Y}: D^{\rm b}(\Qc(X)) \lra D^{\rm b}(\Qc(Y))$, is an equivalence of categories. The same is true for the functors acting on coherent sheaves.

In this Section, we show that the induced functor $\mathcal{S}_Q$ is an also a derived equivalence of categories. Indeed, the following result for quasi-coherent $Q$-sheaves follows immediately from Theorem \ref{teo meu final}.

\begin{coro}
Let $\Q$ be the category generated by a finite quiver $Q$. The integral functor
$$  R(\mathcal{S}_Q) : D^{\rm b}(\Q Q(X)) \to D^{\rm b}(\Q Q(Y)) $$
is an equivalence of categories.
\end{coro}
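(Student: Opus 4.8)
The plan is to deduce this as an immediate specialization of Theorem \ref{teo meu final}, or rather of its bounded-derived-category variant recorded in the paragraph following that theorem. First I would set $\A = \Qc(X)$, $\B = \Qc(Y)$, and take $F = \mathcal{S}$, the Mukai integral functor recalled above. With these choices one has $\Q(\A) = \Q Q(X)$ and $\Q(\B) = \Q Q(Y)$, so that the induced functor $F_{\Q}$ is exactly $\mathcal{S}_Q$, and the functor $R(\mathcal{S}_Q): D^{\rm b}(\Q Q(X)) \to D^{\rm b}(\Q Q(Y))$ appearing in the statement is precisely the functor $R(F_{\Q})$ produced by the theorem.

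The remaining task is to check the three hypotheses of Theorem \ref{teo meu final}. For the first, I would observe that an abelian variety is in particular a noetherian scheme, so $\A = \Qc(X)$ is an abelian category with enough injectives; this is the same fact already invoked above to endow $\Q Q(X)$ with enough injectives. For the second, I would verify that $\mathcal{S}(\mathcal{E}) = \pi_{Y*}(\pi_X^*\mathcal{E} \otimes \mathcal{P})$ is additive and left exact: the pullback $\pi_X^*$ is exact, tensoring with the line bundle $\mathcal{P}$ is exact since $\mathcal{P}$ is locally free, and the direct image $\pi_{Y*}$ is left exact, so the composite is left exact. This is exactly what is required for $R\mathcal{S}$ and for the induced functor $\mathcal{S}_{\Q}$ to be defined as right derived functors in the sense of the previous section. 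For the third, Mukai's theorem, recalled above, asserts that $R\mathcal{S}: D^{\rm b}(\Qc(X)) \to D^{\rm b}(\Qc(Y))$ is an equivalence of categories, which is precisely the derived-equivalence assumption imposed on $F$.

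With the hypotheses in place, the bounded version of Theorem \ref{teo meu final} applies directly and yields that $R(\mathcal{S}_Q): D^{\rm b}(\Q Q(X)) \to D^{\rm b}(\Q Q(Y))$ is an equivalence, which is the assertion. I do not anticipate a genuine obstacle, since the corollary is in essence a repackaging of the theorem for a concrete functor; the only points deserving care are confirming the left exactness of $\mathcal{S}$ (so that the whole derived-functor framework of the preceding section is applicable) and ensuring that one invokes the $D^{\rm b}$ rather than the $D^+$ form of the theorem, which is legitimate because $D^{\rm b}(\A)$ embeds as a full subcategory of $D^+(\A)$ for every abelian category $\A$.
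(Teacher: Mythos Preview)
Your proposal is correct and follows exactly the paper's approach: the paper simply states that the corollary ``follows immediately from Theorem \ref{teo meu final}'' (in its $D^{\rm b}$ version), and your argument is nothing more than a careful verification of the hypotheses of that theorem with $\A=\Qc(X)$, $\B=\Qc(Y)$, and $F=\mathcal{S}$.
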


Our goal now is to prove that the same functor also provides an equivalence between $D^{\rm b}(\Q C(X))$ and $D^{\rm b}(\Q C(Y))$; note that this does not follow from Theorem \ref{teo meu final} because the category $\Co(X)$ does not have enough injectives. 

To go around this difficulty, let $\A'$ be a thick abelian subcategory of $\A$ , by Lemma \ref{lem 1}, $\C(\A')$ is a thick abelian subcategory of $\C(\A)$ then we can define the subcategory $Kom_{\C(\A')}(\C(\A))$  of $Kom(\C(\A))$ of the complexes in $\C(\A)$ whose cohomology objects are $\C(\A')$. Note that the functor $K$, defined in the Lemma \ref{lema kom}, restricted to $Kom_{\C(\A')}(\C(\A))$  has its image in $\C(Kom_{\A'}(\A))$ and is an isomorphism between $Kom_{\C(\A')}(\C(\A))$  and $\C(Kom_{\A'}(\A))$. This fact is due to $H^i(F^{\p})(C)=H^i(F(C)^{\p})$ for all $F^{\p} \in Ob(Kom(\C(\A)))$ and for all $C \in Ob(\C)$.

Therefore we can define $$T_{\A\A'}=T_{\A}|_{D_{\C(\A')}(\C(\A))}:D_{\C(\A')}(\C(\A)) \lra \C(D_{\A'}(\A))$$
Note that $ T_{\A\A'}(F^{\p})$ is actually an object of $\C(D_{\A'}(\A))$ by the new approach of  functor $K$. Moreover, if $\C=\Q$, where $\Q$ is the category generated by a finite quiver Q, then $T_{\A\A'}$ is also fully faithful.

Let $\B'$ be a thick abelian subcategory of $\B$ and let $F:\A \lra \B$ be a left exact functor, such that its extension $RF:D_{\A'}(\A) \lra D_{\B'}(\B)$ is well defined whenever $\A$ has enough injectives. We would like to define $R(F_{\C}):D_{\C(\A')}(\C(\A))\lra D_{\C(\B')}(\C(\B))$, that is, given $A^{\p}$ an object in $D_{\C(\A')}(\C(\A))$, we would like that $R(F_{\C})(A^{\p}) \in Ob(D_{\C(\B')}(\C(\B)))$. For this we need to prove that $H^i(R(F_{\C})(A^{\p})) \in Ob(\C(\B'))$. Indeed, by Proposition \ref{quase iso},  $H^i(R(F_{\C})(A^{\p}))(C) = H^i(R(F_{\C})(A^{\p})(C))$ for all $C \in Ob(\C)$, and 
$$R(F_{\C})(A^{\p})(C)=K(F_{\C})(I(A^{\p}))(C)=KF(I(A^{\p})(C)), \forall C \in Ob(\C).$$
Moreover, $I(A^{\p})$ is quasi-isomorphic to $A^{\p}$ them $I(A^{\p})(C)$ is an object in $D_{\A'}(\A)$ therefore, as $RF:D_{\A'}(\A) \lra D_{\B'}(\B)$ is well defined, $KF(I(A^{\p})(C)) \in Ob(D_{\B'}(\B))$ and consequently  $H^i(R(F_{\C})(A^{\p}))(C)= H^i(KF(I(A^{\p})(C)))$ is an object in $\B'$, for all $C \in Ob(\C)$. Finally $H^i(R(F_{\C})(A^{\p})) \in Ob(\C(\B'))$.

Accordingly, and follow the proof of the Lemma \ref{comutativo}, we have the following commutative diagram: 
 \begin{equation}\nonumber
\xymatrix{ D^{+}_{\C(\A')}(\C(\A)) \ar[rr]^{R(F_{\C})}\ar[d]_{T_{\A\A'}} & & D^{+}_{\C(\B')}(\C(\B)) \ar[d]^{T_{\B\B'}}   \\
 \C(D^{+}_{\A'}(\A)) \ar[rr]_{(RF)_{\C}} & &  \C(D^{+}_{\B'}(\B)),}
\end{equation}

Furthermore, we have the following result; its proof is analogous to the proof of Theorem \ref{teo meu final}:

\begin{teo}\label{equi a a'}
Let $\A$ be an abelian category with enough injectives,  $\A'$ be a thick abelian subcategory of $\A$, $\B'$ be a thick abelian subcategory of $\B$ and let $\Q$ be a category generated by a finite quiver $Q$. 
If $RF:D^+_{\A'}(\A) \lra D^+_{\B'}(\B)$ is an equivalence of categories then 
$R(F_{\Q}):D^+_{\Q(\A')}(\Q(\A)) \lra D^+_{\Q(\B')}(\Q(\B))$ is also an equivalence of categories.
\end{teo}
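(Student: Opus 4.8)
The plan is to follow the proof of Theorem \ref{teo meu final} step by step, the only genuinely new ingredient being the bookkeeping that confines every object and morphism to the prescribed thick subcategories. I would begin with full faithfulness. Since $RF:D^+_{\A'}(\A) \lra D^+_{\B'}(\B)$ is an equivalence, Proposition \ref{prop equiv funtor induzido} shows that the induced functor $(RF)_{\Q}:\Q(D^+_{\A'}(\A)) \lra \Q(D^+_{\B'}(\B))$ is also an equivalence, and in particular fully faithful. By the discussion preceding the statement, the restricted comparison functors $T_{\A\A'}$ and $T_{\B\B'}$ are fully faithful (here one uses that $\Q$ is generated by a finite quiver), and the square relating $R(F_{\Q})$, $(RF)_{\Q}$, $T_{\A\A'}$ and $T_{\B\B'}$ commutes. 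Exactly as in Theorem \ref{teo meu final}, commutativity together with full faithfulness of the three other arrows forces $R(F_{\Q})$ to be fully faithful.

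It then remains to prove essential surjectivity. Given $B^{\p} \in Ob(D^+_{\Q(\B')}(\Q(\B)))$, I would use the restricted form of $K_{\B}$ to view it as a functor $B:\Q \lra Kom^+_{\B'}(\B)$, so that each $B(i)$ is an object of $D^+_{\B'}(\B)$. Essential surjectivity of $RF$ then furnishes, for every vertex $i$, an object $A^{\p}_i \in Ob(D^+_{\A'}(\A))$ with $RF(A^{\p}_i) \simeq B(i)$. For each arrow $p:i \lra j$, I would transport $[B(p)/Id]$ across the Hom-set bijection provided by the full faithfulness of $RF$, choose a representative $A(p)$ in $Kom^+(\A)$ of the resulting morphism, and invoke Lemma \ref{telhado} exactly as in Theorem \ref{t fi ple} to verify functoriality. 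This defines a functor $A:\Q \lra Kom^+_{\A'}(\A)$, and since the restricted $K_{\A}$ is an isomorphism there is an object $A^{\p} \in Ob(D^+_{\Q(\A')}(\Q(\A)))$ with $K_{\A}(A^{\p}) = A$. Constructing comparison quasi-isomorphisms vertex by vertex and gluing them along arrows with Lemma \ref{telhado} then yields $R(F_{\Q})(A^{\p}) \simeq B^{\p}$, verbatim as in Theorem \ref{teo meu final}.

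The categorical manipulations above are identical to the unrestricted case; the real content, and the main obstacle, lies in verifying that nothing escapes the thick subcategories. Two containments must be checked. First, the objects $A^{\p}_i$ must assemble into a complex whose cohomology lies in $\Q(\A')$ rather than merely in $\Q(\A)$: this follows from the identity $H^n(F^{\p})(C) = H^n(F(C)^{\p})$ recorded before the statement, since each $A^{\p}_i$ already has cohomology in $\A'$, so that the preimage $A^{\p}$ under $K_{\A}$ indeed lies in $D^+_{\Q(\A')}(\Q(\A))$. Second, $R(F_{\Q})(A^{\p})$ must have cohomology in $\Q(\B')$; this was already established in the discussion preceding the statement, where it is shown that $H^n(R(F_{\Q})(A^{\p}))(C) = H^n(KF(I(A^{\p})(C)))$ lies in $\B'$ for every $C \in Ob(\Q)$. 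Once these two facts are in place, the argument of Theorem \ref{teo meu final} applies word for word.
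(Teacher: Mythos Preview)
Your proposal is correct and matches the paper's approach exactly: the paper itself does not give a separate proof but simply asserts that the argument is analogous to that of Theorem \ref{teo meu final}, and you have spelled out precisely that analogy, together with the two thick-subcategory checks (cohomology of $A^{\p}$ lies in $\Q(\A')$, and $R(F_{\Q})(A^{\p})$ lands in $D^+_{\Q(\B')}(\Q(\B))$) that are the only new content and which the paper establishes in the discussion immediately preceding the statement.
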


As also noted at the end of the previous Section, the same result is valid substituting $D^+$ for $D^{\rm b}$: if
$RF:D^{\rm b}_{\A'}(\A) \lra D^{\rm b}_{\B'}(\B)$ is an equivalence of categories then 
$R(F_{\Q}):D^{\rm b}_{\Q(\A')}(\Q(\A)) \lra D^{\rm b}_{\Q(\B')}(\Q(\B))$ is also an equivalence of categories.

As an application of our last Theorem, we have: 

\begin{coro}
Let $\Q$ be the category generated by a finite quiver $Q$. The integral functors
$$ R(\mathcal{S}_Q) : D^{\rm b}(\Q(X)) \to D^{\rm b}(\Q(Y)) ~~{\rm and}~~
R(\mathcal{S}_Q) : D^{\rm b}(\Q C(X)) \to D^{\rm b}(\Q C(Y)) $$
are equivalences of categories.
\end{coro}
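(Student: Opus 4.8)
The plan is to derive both equivalences from the $D^{\rm b}$ form of Theorem \ref{equi a a'} (the variant noted immediately after its proof), supplemented by Proposition \ref{dbq=dbqc} to pass from quasi-coherent to coherent $Q$-sheaves. For the first functor I would apply Theorem \ref{equi a a'} with $\A=\Qc(X)$, $\B=\Qc(Y)$, thick subcategories $\A'=\Co(X)$, $\B'=\Co(Y)$, and $F=\mathcal{S}$. The hypotheses are all in place: since $X$ is noetherian, $\Qc(X)$ has enough injectives; $\Co(X)$ and $\Co(Y)$ are thick abelian subcategories of $\Qc(X)$ and $\Qc(Y)$; and Mukai's theorem \cite{M} asserts that $R\mathcal{S}=\Phi^{\mathcal{P}}_{X\to Y}:D^{\rm b}_{\Co(X)}(\Qc(X))\to D^{\rm b}_{\Co(Y)}(\Qc(Y))$ is an equivalence. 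Recalling the definitions $D^{\rm b}(\Q(X))=D^{\rm b}_{\Q C(X)}(\Q Q(X))$ together with $\Q(\Co(X))=\Q C(X)$ and $\Q(\Qc(X))=\Q Q(X)$, the conclusion of Theorem \ref{equi a a'} reads exactly that $R(\mathcal{S}_Q):D^{\rm b}(\Q(X))\to D^{\rm b}(\Q(Y))$ is an equivalence.

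For the second functor the obstruction, already flagged in the text, is that $\Co(X)$ lacks enough injectives, so one cannot invoke Theorem \ref{equi a a'} with $\A=\Co(X)$ directly. Instead I would transport the equivalence just obtained along the inclusions $i_X:D^{\rm b}(\Q C(X))\to D^{\rm b}(\Q(X))$ and $i_Y:D^{\rm b}(\Q C(Y))\to D^{\rm b}(\Q(Y))$, which are equivalences by Proposition \ref{dbq=dbqc}. Concretely, I would show that the square
\begin{equation}\nonumber
\xymatrix{ D^{\rm b}(\Q C(X)) \ar[rr]^{R(\mathcal{S}_Q)}\ar[d]_{i_X} & & D^{\rm b}(\Q C(Y)) \ar[d]^{i_Y} \\
D^{\rm b}(\Q(X)) \ar[rr]_{R(\mathcal{S}_Q)} & & D^{\rm b}(\Q(Y)) }
\end{equation}
commutes up to natural isomorphism, and then apply the two-out-of-three property of equivalences: since $i_X$, $i_Y$, and the bottom arrow are equivalences, the top arrow $R(\mathcal{S}_Q):D^{\rm b}(\Q C(X))\to D^{\rm b}(\Q C(Y))$ is an equivalence as well.

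The delicate point, and the step I expect to be the main obstacle, is the commutativity of this last square, i.e. the compatibility of the integral functor $R(\mathcal{S}_Q)$ with the inclusion $D^{\rm b}(\Q C)\hookrightarrow D^{\rm b}_{\Q C}(\Q Q)$. This reduces to two facts that must be checked at the level of the vertex-wise constructions: first, that $\mathcal{S}$ carries $\Co(X)$ into $\Co(Y)$ and that $R\mathcal{S}$ preserves coherence of cohomology (a consequence of $\pi_{Y*}$ being a proper pushforward, $X\times Y\to Y$ being proper); and second, that the induced derived functor $R(\mathcal{S}_Q)$, being computed vertex by vertex through the isomorphism $K$ of Lemma \ref{lema kom} and the functor $T$, therefore sends the coherent subcategory to the coherent subcategory and intertwines the two inclusions. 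Once this bookkeeping is settled, the commutativity of the square is immediate and the equivalence follows formally; I would then remark that the same reasoning applied to $F=\mathcal{S}:\Co(X)\to\Co(Y)$ (via the quasi-coherent models supplied by Proposition \ref{dbq=dbqc}) yields both displayed equivalences simultaneously.
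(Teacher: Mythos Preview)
Your approach is correct and matches the paper's: the first equivalence is obtained from the $D^{\rm b}$ form of Theorem \ref{equi a a'} with $\A=\Qc(X)$, $\A'=\Co(X)$, $\B=\Qc(Y)$, $\B'=\Co(Y)$, $F=\mathcal{S}$, and the second is deduced from the first via Proposition \ref{dbq=dbqc}. The paper's proof is in fact terser than yours---it simply asserts that the second equivalence ``follows immediately'' from Proposition \ref{dbq=dbqc} without spelling out the compatibility square you identify; your care on that point is well placed, but the underlying strategy is the same.
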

\begin{proof}
As previously stated, we have that $\Phi^{\cK}:D^{\rm b}(X) \to D^{\rm b}(Y)$ is a Fourier-Mukai transform. Then, by the Theorem \ref{equi a a'}
$$ R(\mathcal{S}_Q) : D^{\rm b}(\Q (X)) \to D^{\rm b}(\Q (Y)) $$ 
is an equivalence.  The second equivalence follows immediately from Proposition \ref{dbq=dbqc}.
\end{proof}

\thispagestyle{empty}

\end{document}